\newtheorem{prop}{Proposition}[section]
\newtheorem{thm}[prop]{Theorem}
\newtheorem{cor}[prop]{Corollary}
\newtheorem{lem}[prop]{Lemma}
\theoremstyle{definition}
\newtheorem{defn}[prop]{Definition}
\newtheorem{rem}[prop]{\it Remark}
\newtheorem*{claim*}{Claim}
\newcommand{\bP}{\mathbb{P}}
\newcommand{\bC}{\mathbb{C}}
\newcommand{\bR}{\mathbb{R}}
\newcommand{\bQ}{\mathbb{Q}}
\newcommand{\bN}{\mathbb{N}}
\newcommand{\Rr}{\mathbb{R}}
\newcommand{\Qq}{\mathbb{Q}}
\newcommand{\cO}{\mathcal{O}}
\newcommand{\fa}{\mathfrak{a}}
\newcommand{\Center}{\mathrm{Center}}
\newcommand{\Supp}{\mathrm{Supp}}
\newcommand{\mult}{\mathrm{mult}}
\newcommand{\lct}{\mathrm{lct}}
\newcommand{\Ex}{\mathrm{Ex}}
\newcommand{\Pic}{\mathrm{Pic}}
\newcommand{\vol}{\mathrm{vol}}
\newcommand{\ord}{\mathrm{ord}}
\newcommand{\Val}{\mathrm{Val}}
\newcommand{\Nklt}{\mathrm{Nklt}}
\newcommand{\hvol}{\widehat{\rm vol}}
\newcommand{\hVol}{\widehat{\rm Vol}}
\newcommand{\Coef}{\mathrm{Coeff}}
\newcommand{\Diff}{\mathrm{Diff}}
\newcommand{\mld}{\mathrm{mld}}
\newcommand{\alg}{\mathrm{alg}}
\numberwithin{equation}{section}
\title{Effective termination of general type MMPs in dimension at most five}
\author{Jingjun Han}
\address{Shanghai Center for Mathematical Sciences, Fudan University, Jiangwan Campus, Shanghai, 200438, China}
\email{hanjingjun@fudan.edu.cn}
\author{Jihao Liu}
\address{Department of Mathematics, Peking University, No. 5 Yiheyuan Road, Haidian District, Beijing 100871, China}
\email{liujihao@math.pku.edu.cn}
\author{Ziquan Zhuang}
\address{Department of Mathematics, Johns Hopkins University, Baltimore, MD 21218, USA}
\email{zzhuang@jhu.edu}
\date{}
\subjclass[2020]{14E30, 14J30, 14J35.}
\begin{document}

\begin{abstract}
We prove the effective termination of general type MMPs in dimension at most $5$, and give explicit bounds (in terms of topological invariants and volume of divisors) on the number of steps in the MMP when the dimension is at most $4$.
\end{abstract}

\maketitle

\pagestyle{myheadings}\markboth{\hfill J. Han, J. Liu, and Z. Zhuang \hfill}{\hfill Effective termination of general type MMP in dimension at most five \hfill}


\emph{Throughout this paper, we work over $\mathbb{C}$, the field of complex numbers.} 

\section{Introduction}


The termination of the minimal model program (MMP) is one of the major open problems in birational geometry. The termination is known in full generality in dimension three by \cites{Sho85,K+-flip-abundance,Kaw-termination,Sho96}. Several special cases are also known in dimension four \cites{KMM87,Fuj05,AHK07,Bir07,Sho09,Bir10,HL22-Zariski,CT23,Mor25}, and in higher dimensions we have the termination of MMP with scaling (a special kind of MMP) in the general type or big boundary case \cite{BCHM}. Beyond these, the termination conjecture remains extremely difficult. In particular, it is an open question whether every fivefold general type MMP terminates. 

Our first result in this paper is the effective termination of general type MMPs, or more generally, of pseudo-effective MMP with a big boundary, in dimension five.

\begin{thm}[see Theorem \ref{thm: eff termination for gt}] \label{main:termination}
Let $(X,\Delta)$ be a projective klt pair of dimension $5$. Assume that at least one of the following holds.
\begin{enumerate}
    \item $K_X+\Delta$ is big.
    \item $K_X+\Delta$ is pseudo-effective and $\Delta$ is big.
\end{enumerate}
Then there exists a positive integer $m$ depending only on $(X,\Delta)$ such that every $(K_X+\Delta)$-MMP terminates after at most $m$ steps.
\end{thm}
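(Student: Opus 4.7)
The plan is to reduce to bounding the number of log flips effectively and then adapt Shokurov's difficulty function. A step of a $(K_X+\Delta)$-MMP is either a divisorial contraction --- of which at most $\rho(X)-1$ can occur in total, since each drops the Picard number by one --- or a log flip. So the theorem reduces to producing an effective upper bound, in terms of $(X,\Delta)$, on the length of any sequence of flips starting from a pair obtained from $(X,\Delta)$ by finitely many earlier steps.

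For the flip sequence, the candidate invariant is a difficulty function $d(X_i,\Delta_i)$, a nonnegative integer of the form $\sum \lceil 1 - a(E;X_i,\Delta_i)\rceil$, summed over divisorial valuations $E$ with $a(E;X_i,\Delta_i)<1$ and center of controlled type. The classical behaviour of log discrepancies across a flip should force $d$ to drop strictly whenever the flipping locus meets the locus over which $d$ is measured; the remaining flips --- those disjoint from that locus --- should be absorbed by an \emph{effective special termination} statement in a neighbourhood of the non-klt locus, which reduces by induction on dimension to effective termination in dimension at most four (the cases referred to in the introduction, refined in this paper). The problem then collapses to bounding the initial value $d(X,\Delta)$ effectively.

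This is where the bigness assumption enters decisively. In both cases (1) and (2), BCHM supplies the existence of a log minimal model and the termination of a particular $(K_X+\Delta)$-MMP with scaling; comparing discrepancies with this distinguished output, together with ACC for minimal log discrepancies in low dimension, boundedness of complements, and effective birational boundedness for general type pairs, should bound the number of divisorial valuations contributing to $d(X,\Delta)$ in terms of $\vol(K_X+\Delta)$, the Picard number of $X$, and the coefficients of $\Delta$. The main obstacle, and I expect the hardest part of the argument, is making special termination \emph{effective} in dimension five --- producing an honest integer drop of $d$ at each relevant flip, with a starting value controlled purely by the input $(X,\Delta)$ --- which rests on ACC for mlds in dimension five and on the low-dimensional effective termination established elsewhere in the paper. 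Threading these two ingredients together so that the final bound is genuinely effective, and not merely finite, will be the delicate combinatorial core of the proof.
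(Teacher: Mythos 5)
Your proposal has two genuine gaps, and it also misses the soft mechanism by which the paper converts termination into the effective bound.

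First, the statement only asks for \emph{some} integer $m$ depending on $(X,\Delta)$, not an explicit one. The paper obtains $m$ from plain termination via a K\H{o}nig's lemma argument (Lemmas \ref{lem: possible of one-step mmp} and \ref{lem: temrination for gt imply effective termination}): since $\Delta$ (or $K_X+\Delta$) is big, the cone theorem gives only finitely many possible extremal contractions at each stage, so the tree of all partial MMPs has finite branching; if its depth were unbounded, one could extract an infinite MMP, contradicting termination. Your plan instead tries to produce an explicit bound by controlling a difficulty function from the start, which is a strictly harder problem that the paper only solves in dimension at most $4$ (Section \ref{sec:explicit tof}). Nothing in your sketch recovers this reduction, and the ingredients you invoke for bounding the initial difficulty (boundedness of complements, effective birational boundedness) are not what is needed and are not used in the paper.

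Second, and more seriously, the termination step itself is not established by your argument. You assert that ``the classical behaviour of log discrepancies across a flip should force $d$ to drop strictly'' and that effective special termination in dimension five ``rests on ACC for mlds in dimension five.'' ACC for minimal log discrepancies in dimension five is a wide open conjecture, and the strict decrease of a difficulty function along an arbitrary fivefold flip sequence is essentially the open termination conjecture itself; neither can be taken as input. The paper's route is different: bigness of $\Delta$ gives a uniform bound on the Cartier index of Weil divisors along the MMP (Theorem \ref{thm:index bdd}, from \cite{HQZ25}), which forces all discrepancies into a fixed discrete set (Lemma \ref{lem:index bdd imply mld discrete}); this permits lifting the MMP to $\bQ$-factorial terminalizations where termination of terminal fourfold MMPs applies (Lemmas \ref{lem:lift MMP terminal} and \ref{lem:termination dim 4 mld discrete}), yielding a special termination statement in dimension five under the index bound (Lemma \ref{lem:special termination dim 5 index bdd}); the final contradiction uses the ACC for \emph{log canonical thresholds} (a theorem of \cite{HMX-ACC}) applied to a strictly increasing sequence $t_0<t_1<\cdots$ of thresholds of the effective divisor $D\sim_\bR K_X+\Delta$. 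Without the index bound and the discreteness it provides, your induction on dimension has no base to stand on in the klt (non-terminal) fivefold setting.
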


We also give some explicit bounds on the number of steps in the MMP in dimensions $3$ and $4$. A sample result is the following. Here $h_4^{\alg}(X)$ is the dimension of the subspace in the singular homology group
$H_4(X,\mathbb R)$ generated by algebraic cycles.

\begin{thm}[see Theorems \ref{thm:4fold explicit boundary big}] \label{main: explicit}
Let $N$ be a positive integer and $(X,\Delta)$ a projective log smooth klt pair of dimension $4$ such that $N\Delta$ has integer coefficients. Assume that 
\begin{enumerate}
    \item $(X,\Delta)$ has at most $N$ strata, each stratum has Picard number at most $N$, and $h_4^{\alg}(X)\le N$, and
    \item $\Delta$ is big, $\vol(\Delta)\ge \frac{1}{N}$, and $(\Delta\cdot H^3)\le N$ for some very ample divisor $H$ on $X$.
\end{enumerate}
Then any $(K_X+\Delta)$-MMP terminates after at most $M^M$ steps, where $M:=(2 N)^9!$.
\end{thm}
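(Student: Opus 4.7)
The plan is to combine an effective Shokurov-style difficulty count with effective special termination in dimension four, feeding in the effective three-dimensional termination bounds established earlier in the paper. Since $\Delta$ is big and $(X,\Delta)$ is klt, any $(K_X+\Delta)$-MMP terminates by \cite{BCHM}; the task is purely to bound the length. First I would reduce to the case of an MMP with scaling of a controlled ample divisor $A$ (for instance a general member of $|H|$), by comparing an arbitrary $(K_X+\Delta)$-MMP step by step with one with scaling, at the cost of an explicit factor depending only on $N$.

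Then I would split the steps into divisorial contractions and flips. The number of divisorial contractions in any MMP is at most $\rho(X)$, which is controlled by $h_4^{\alg}(X)\leq N$ via the algebraic Poincar\'e pairing, yielding at most $O(N)$ divisorial contractions. To bound the flips, I would introduce the Shokurov-type difficulty
\[
d_\varepsilon(X,\Delta) := \#\bigl\{E \text{ a prime divisor over } X : a(E;X,\Delta) < 1-\varepsilon\bigr\},
\]
for a gap $\varepsilon = \varepsilon(N)>0$; this gap is available because $N\Delta$ is integral, so the log discrepancies of $(X,\Delta)$ and its perturbations lie in a DCC set with an effective spacing. The initial value of $d_\varepsilon$ is bounded using $\vol(\Delta)\geq \frac{1}{N}$ and $(\Delta\cdot H^3)\leq N$, together with the $N$-stratum hypothesis: these force bounded multiplicities on a log resolution, hence a bounded number of exceptional divisors with small log discrepancy.

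The bulk of the argument is then to show that $d_\varepsilon$ strictly drops after a controlled number of flips. For flips whose flipping locus meets an lc center of some perturbation $(X,\Delta+tA)$, the decrease is handled by effective special termination: restricting the MMP to each lc center produces an MMP in dimension $\leq 3$ on a pair whose invariants are bounded in terms of $N$, to which the effective three-dimensional termination result can be applied, contributing a factor of roughly $M$. The number of lc centers is controlled by the initial stratum count and its propagation through the MMP. Flips avoiding all current lc centers are handled by a secondary induction: create new lc centers by further scaling, and reduce to the previous case.

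The main obstacle is the quantitative bookkeeping of the nested induction. Each application of effective special termination multiplies the running bound by a factor comparable to the three-dimensional bound, and several layers of induction --- over dimension, over strata, and over the difficulty --- stack to give the iterated bound $M^M$ with $M=(2N)^9!$. Keeping $\varepsilon$, the DCC gaps, and all auxiliary constants dependent only on $N$ throughout, and ensuring that the log smooth hypothesis on the initial pair can be reinstated after each MMP step (via log resolution) without blowing up the invariants, is the most delicate point in the proof.
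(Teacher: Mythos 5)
Your proposal has several genuine gaps, the most serious being the source of the ``gap'' $\varepsilon$ and the mechanism that makes the flip count effective. You assert that the log discrepancies along the MMP lie in a set with effective spacing ``because $N\Delta$ is integral,'' but this is false on its own: the models $X_i$ appearing later in the MMP are only klt, and the denominators of $a(E,X_i,\Delta_i)$ are governed by the Cartier index of Weil divisors on $X_i$, which a priori can grow without bound. The paper's essential input, which your proposal never invokes, is the explicit uniform bound on this Cartier index along any MMP from a big boundary (\cite[Corollary 3.18]{HQZ25}); only with that bound do the negative discrepancies land in a fixed finite set $S$ of explicitly bounded cardinality. Relatedly, the difficulty $d_\varepsilon$ you introduce does not strictly drop at each klt fourfold flip --- the classical difficulty arguments require terminal singularities. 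The paper handles this by lifting the MMP to a $\bQ$-factorial terminalization (Lemma \ref{lem:lift MMP terminal}), where Fujino's lexicographically decreasing sequence of Picard numbers and weighted difficulties applies, and then converts the lexicographic decrease into a genuine numerical decrease of an ``$M$-adic'' weighted sum by showing each increment of the lower-order terms is bounded by $M-1$ (Theorem \ref{thm:terminal s_M non-increasing}); the boundary changes on the terminalization are controlled because they can occur at most $k\cdot|S|$ times.

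Two further steps in your outline would fail as stated. First, an arbitrary $(K_X+\Delta)$-MMP cannot be compared ``step by step'' with an MMP with scaling at the cost of a bounded factor; the two can diverge after the first step, and the theorem must be proved for arbitrary MMPs directly (which the terminalization argument does). Second, your fallback for flips avoiding all lc centers --- ``create new lc centers by further scaling and reduce to the previous case'' --- is precisely the increasing-lct argument the paper uses for the (non-effective) fivefold termination; its termination rests on the ACC of log canonical thresholds and yields no explicit bound on how many times the threshold can increase. The paper deliberately avoids this route in dimension four, which is why the explicit bound is attainable there but not (yet) in dimension five.
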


By passing to log resolutions, Theorem \ref{main: explicit} gives explicit termination results for klt MMP with a big boundary (see Lemma \ref{lem:lift MMP dlt}). 

We remark that even the termination of such fourfold MMPs was not previously known. We also prove similar explicit termination results for general type fourfold MMPs (Theorem \ref{thm:4fold explicit general type}), terminal fourfold MMPs (Corollary \ref{cor:terminal explicit}) and arbitrary threefold MMPs (Theorem \ref{thm: threefold explcit upper bound}). For some past works on the explicit or effective termination problem for threefolds, see for example \cites{CH11,Mar22,CC24,Kim25}.

\medskip

\noindent\textbf{Sketch of proof.} A key ingredient in the proof of Theorems \ref{main:termination} and \ref{main: explicit} is some (explicit) uniform bounds on the Cartier index of Weil divisors in a general type MMP \cite[Theorem 3.2, Corollaries 3.18 and 3.20]{HQZ25}. By Birkar's inductive approach \cite{Bir07}, if termination holds in dimension $n-1$, then it also holds for pairs of nonnegative Kodaira dimension in dimension $n$. We refine Birkar's argument to show that if termination holds for MMPs with discrete discrepancies in dimension $n-1$, then we also have termination for MMPs with uniform index bound in dimension $n$. By the termination of terminal fourfold flips, it is not hard to see that MMP with discrete discrepancies always terminates in dimension $4$, and this gives the termination of general type MMPs in Theorem \ref{main:termination}. Once we have termination, the effective bound on the number of steps essentially comes from K\H{o}nig's lemma.

The proof of Theorem \ref{main: explicit} divides into two main steps: we first find explicit bounds on the number of steps in terminal MMPs, and then use the index bound from \cite{HQZ25} to reduce to the terminal case. By a variation of difficulty function type arguments for termination (see e.g. \cites{Sho85,KMM87,K+-flip-abundance,Kaw-termination,Sho96,Fuj05,AHK07}), one may write down a sequence of invariants involving the Picard numbers of boundary divisors, the (weighted) difficulties of the pair, and certain algebraic Betti numbers, that is lexicographically decreasing in any terminal MMPs in dimensions $3$ and $4$. To turn this into an explicit bound on the number of steps in the terminal MMP, we show in Section \ref{ss:explicit terminal} that the above lexicographic sequence of invariants decreases in a bounded way, in the sense that each time an individual term drops by one, the terms thereafter increases by at most a uniformly fixed amount. To deal with the general klt case, we further introduce a weighted combination of the above invariants, where the weights ultimately depends on the index bound from \cite{HQZ25}, and combine with the results in the terminal case to show that the weighted combination decreases by (at least) a fixed amount after each step of the MMPs we consider. This gives the explicit bound in Theorem \ref{main: explicit}.

\medskip

\noindent\textbf{Outline of the article.} In Section \ref{sec: preliminary} we summarize some preliminary results that will be used in the later part of this paper. In Section \ref{sec:tof dim 5} we prove Theorem \ref{main:termination}. In Section \ref{sec:explicit tof} we prove Theorem \ref{main: explicit} and its variants. 
 
\medskip

\noindent\textbf{Acknowledgment} 
The authors would like to thank Jungkai A. Chen, Christopher D. Hacon, James M\textsuperscript{c}Kernan, Lu Qi, Vyacheslav Shokurov, Chenyang Xu for helpful discussions and comments. JH is supported by NSFC for Excellent Young Scientists (\#12322102), and the National Key R\&D Program of China (\#2023YFA1010600). JH is a member of LMNS, Fudan University. JL is supported by the National Key R\&D Program of China (\#2024YFA1014400). ZZ is partially supported by the NSF Grants DMS-2240926, DMS-2234736, a Sloan research fellowship, a Packard fellowship, and the Simons Collaboration on Moduli of Varieties.

\section{Preliminary}\label{sec: preliminary}

In this section, we collect some preliminary materials that will be used throughout the article.

\subsection{Notation and conventions}

In this paper, varieties are assumed to be quasi-projective unless otherwise specified. We follow the standard terminology from \cites{KM98,Kol13}.

Given two $\bR$-divisors $\Delta$ and $\Delta'$ on a variety $X$, we write $\Delta'\ge \Delta$ if $\mult_D\Delta'\geq\mult_D\Delta$ for any prime divisor $D$ on $X$. 

A \emph{sub-pair} $(X,\Delta)$ consists of a normal variety $X$ together with an $\bR$-divisor $\Delta$ on $X$ (a priori, we do not require that $K_X+\Delta$ is $\bR$-Cartier). It is called a \emph{pair} if $\Delta\geq 0$. We denote by $\Coef(\Delta)$ the (finite) set of coefficients of $\Delta$ (in particular, we require that $0\in\Coef(\Delta)$). A \emph{log (sub-)pair} is a (sub-)pair $(X,\Delta)$ such that $K_X+\Delta$ is $\mathbb R$-Cartier. 

A sub-pair $(X,\Delta)$ is called \emph{log smooth} if $X$ is smooth and $\Supp(\Delta)$ is a simple normal crossing divisor. A \emph{stratum} (resp. \emph{open stratum}) of a log smooth pair $(X,\Delta=\sum_{i\in I} b_i B_i)$, where $B_i$ are the irreducible components of $\Delta$, is defined to be a connected component of $B_J:=\cap_{j\in J} B_j$ (resp. $B^\circ_J:=\cap_{j\in J} B_j \setminus \cup_{j\not\in J} B_j$) for some $J\subseteq I$ (by convention, $B_\emptyset = X$ and $B^\circ_\emptyset = X\setminus \Supp(\Delta)$).

Let $(X,\Delta)$ be a log sub-pair. A proper birational morphism $\pi\colon (Y,\Delta_Y)\rightarrow (X,\Delta)$ from a sub-pair is called \emph{crepant} if $K_Y+\Delta_Y=\pi^*(K_X+\Delta)$, in which case we also call $(Y,\Delta_Y)$ the \emph{crepant pullback} of $(X,\Delta)$ (via $\pi$). 

An $\bR$-divisor $D$ on a variety $X$ is \emph{big} if $D=A+E$ for some ample $\bR$-divisor $A$ and some $E\ge 0$. Let $X\to Z$ be a morphism. We say $D$ is \emph{big over $Z$} if $D\sim_{\bR,Z} D'$ for some big $\bR$-divisor $D'$ on $X$. 


An $\bR$-divisor $D$ on a projective variety $X$ is \emph{pseudo-effective} if its numerical class is a limit of $\bR$-divisors $D_i\ge 0$ ($i=1,2,\dots$). Given a projective morphism $X\to Z$, we say $D$ is \emph{pseudo-effective over $Z$} if its restriction to the fiber $X_\eta$ is pseudo-effective, where $\eta\in Z$ is the image of the generic point of $X$.

A \emph{fibration} is a proper surjective morphism $f\colon X\to Z$ of normal varieties such that $f_* \cO_X=\cO_Z$. If $X\to Z$ is a fibration and $(X,\Delta)$ is a pair, by abuse of notation we also call $(X,\Delta)\to Z$ a fibration.


Let $X$ be a variety. For any proper birational morphism $\pi\colon Y\to X$ and any prime divisor $E$ on $Y$, we call $E$ a \emph{(prime) divisor over} $X$ and denote by $\Center_X(E):=\pi(E)$ the \emph{center} of $E$ on $X$. We also denote by $c_X(E)$ the generic point of $\Center_X(E)$ and sometimes also call it the center of $E$ if no confusion arises. We also call $E$ as a \emph{(prime) divisor over} $x\in X$ where $x$ is the generic point of $\pi(E)$. We denote by $\Ex(\pi)\subseteq Y$ the exceptional locus of $\pi$. The \emph{discrepancy} of $E$ with respect to a log sub-pair $(X,\Delta)$, denoted by $a(E,X,\Delta)$, is defined by
\[
a(E,X,\Delta):=\mult_E (K_Y-\pi^*(K_X+\Delta)).
\]

The notation of \emph{terminal}, \emph{klt}, \emph{dlt}, and \emph{log canonical} (sub-)pairs are defined as in \cite[Definitions 2.34 and 2.37]{KM98}. 
Let $(X,\Delta)$ be a log canonical pair. A subvariety $W\subseteq X$ is called a \emph{log canonical center} if either $W=X$, or $W$ is the center of some divisor $E$ over $X$ with $a(E,X,\Delta)=-1$.



\begin{defn}
For any log canonical sub-pair $(X,\Delta)$, we define
\[
\mld(X,\Delta):=\inf\{a(E,X,\Delta)+1\mid E\text{ is a prime divisor over } X\} 
\]
as the \emph{total minimal log discrepancy} (\emph{total mld}) of $(X,\Delta)$.
\end{defn}

\subsection{Minimal Model Program (MMP)}

We refer to e.g. \cite[Paragraph 3.31]{KM98} for a description of the MMP. In this subsection, we recall some results on the MMP that will be used in this paper.

\begin{defn} \label{defn:MMP type contraction}
A birational map $\varphi\colon (X,\Delta)\dashrightarrow (X',\Delta')$ of log pairs is called an \emph{MMP type contraction} if $\varphi^{-1}$ has no exceptional divisor,  and there exist proper birational morphisms $f\colon Y\to X$, $g\colon Y\to X'$ such that
\[
f^*(K_X+\Delta)-g^*(K_{X'}+\Delta')\ge 0
\]
and $g=\varphi\circ f$. In particular, we have $\varphi_*\Delta\ge \Delta'$. 
\end{defn}

As the name suggests, if $\varphi\colon (X,\Delta)\dashrightarrow (X',\Delta')$ is obtained by running a $(K_X+\Delta)$-MMP, then $\varphi\colon (X,\Delta)\dashrightarrow (X',\Delta')$ is an MMP type contraction. The following index bound is proved in \cite{HQZ25}.

\begin{thm} \label{thm:index bdd}
Let $(X,\Delta)$ be a klt pair where $\Delta$ is big. Then there exist a positive integer $r$ depending only on $(X,\Delta)$ such that for any MMP type contraction $\varphi\colon (X,\Delta)\dashrightarrow (X',\Delta')$ with $\Delta'=\varphi_* \Delta$, the Cartier index of any $\bQ$-Cartier Weil divisor on $X'$ is at most $r$.
\end{thm}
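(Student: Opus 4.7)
The plan is to bound the Cartier index via the normalized volume theory, combined with the finiteness of MMP-type models for a fixed klt pair with big boundary.

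First, I would invoke the Cartier index estimate of Xu--Zhuang: for any $n$-dimensional klt pair $(Y,\Gamma)$, any closed point $y\in Y$, and any $\bQ$-Cartier Weil divisor $D$ on $Y$, the divisor $mD$ is Cartier near $y$ for some positive integer $m\le n^n/\hvol(y,Y,\Gamma)$. Applied to $(X',\Delta')$ at every closed point $x'\in X'$, this gives a pointwise Cartier index bound in terms of local volume. To upgrade it to a global bound on $X'$ that is uniform in the MMP-type contraction $\varphi$, it suffices to bound $\hvol(x',X',\Delta')$ uniformly from below as $x'$ and $\varphi$ vary.

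To produce such a uniform lower bound, I would exploit that $\Delta$ is big. By BCHM, the $(K_X+\Delta)$-MMP (and its perturbations) terminates and yields finitely many log minimal models; more generally, the theory of Mori chamber decompositions should confine the set of $\bQ$-factorial klt pairs $(X',\Delta')$ arising as the target of an MMP-type contraction from $(X,\Delta)$ to a finite family $\{(X_i',\Delta_i')\}$ up to isomorphism: each such $X'$ is obtained from $X$ by a sequence of divisorial contractions and flips, and the no-extraction condition on $\varphi^{-1}$ together with the discrepancy inequality keeps us inside this finite family. For each $(X_i',\Delta_i')$, the local volume function $x'\mapsto \hvol(x',X_i',\Delta_i')$ is constructible on the quasi-projective variety $X_i'$, hence takes only finitely many values and is bounded below by a positive constant $\varepsilon_i$. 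Setting $\varepsilon:=\min_i\varepsilon_i>0$ yields the desired uniform lower bound. Since the set of local Cartier indices occurring on each $X_i'$ is finite, the global Cartier index of every $\bQ$-Cartier Weil divisor on $X_i'$ is likewise bounded; taking the maximum over $i$ produces the required integer $r$.

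The main obstacle is the finiteness of MMP-type models (up to isomorphism of the target) for a fixed klt pair with big boundary. Finiteness of minimal models is standard via BCHM, but the MMP-type contraction condition in Definition~\ref{defn:MMP type contraction} is weaker than being a specific step of a chosen MMP and encompasses flops, weak canonical models of perturbed boundaries, and compositions of such operations. Verifying that the no-extraction hypothesis on $\varphi^{-1}$ together with $f^*(K_X+\Delta)\ge g^*(K_{X'}+\Delta')$ truly confines $(X',\Delta')$ to a finite family is where I expect the bulk of the technical work to lie. An alternative, more direct approach is to bypass the finiteness statement by tracking how $\hvol$ transforms under a single divisorial contraction or flip (controlling the change in log discrepancy against the change in valuation volume via the common resolution), and then iterate; this route avoids BCHM-style boundedness but requires a delicate local volume comparison on the two sides of an MMP-type step.
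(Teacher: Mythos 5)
There is a genuine gap. The paper itself only cites \cite[Theorem 3.2]{HQZ25} for this statement, but the mechanism there (visible also in material the authors drafted for this paper) is: (i) the Xu--Zhuang bound that a $\bQ$-Cartier Weil divisor is $r$-Cartier near $x'$ for some $r\le n^n/\hvol(x',X',\Delta')$, and (ii) a \emph{uniform} lower bound $\hvol(x',X',\Delta')\ge \alpha(X,\Delta;V_\bullet)^n\cdot\vol(V_\bullet)$ for a suitable graded linear series $V_\bullet$ of $\Delta$ (which has positive volume because $\Delta$ is big). Step (ii) works because an MMP type contraction pushes $V_\bullet$ forward without changing its volume, and the alpha invariant does not decrease: if $(X,\Delta+tD)$ is log canonical for $D\in\frac1m|V_m|$, then $f^*(K_X+\Delta)-g^*(K_{X'}+\Delta')\ge 0$ together with the negativity-lemma style comparison shows $(X',\Delta'+tD')$ is still log canonical. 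You correctly identify ingredient (i), but you replace (ii) by a finiteness claim for the targets of MMP type contractions, and that claim is exactly where your argument breaks down.

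Concretely, Definition \ref{defn:MMP type contraction} is far weaker than ``a step of a chosen MMP'': it imposes no $\bQ$-factoriality on $X'$, no extremality, and allows crepant maps (flops) and arbitrary compositions, so the targets are not confined to the finite set of ample/weak log canonical models produced by BCHM; no finiteness statement of this kind is available (and proving effective termination, the goal of the paper, would be needed even to confine the intermediate steps of actual MMPs to a finite list -- so this route is close to circular). Your fallback suggestion of iterating a local-volume comparison step by step fares no better: each step could lose a factor, and without an a priori bound on the number of steps the losses cannot be controlled; moreover an MMP type contraction need not decompose into elementary steps at all. The fix is precisely the global, monotone invariant $\alpha(\cdot;V_\bullet)^n\vol(V_\bullet)$, which bounds $\hvol$ from below on \emph{every} target simultaneously without any finiteness input.
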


\begin{proof}
This is a special case of \cite[Theorem 3.2]{HQZ25}.
\end{proof}

We will frequently use the following observation, which is well known to the experts ({\it cf.} \cite[Lemma 3.7.5 and Proof of Theorem 1.2]{BCHM}), to reduce questions about general type MMPs to MMPs with a big boundary.

\begin{lem}\label{lem: gt klt sim to big boundary}
Let $(X,\Delta)$ be a klt pair and $X\rightarrow T$ a morphism. 
\begin{enumerate}
    \item If $K_X+\Delta$ is big over $T$, then there exists a klt pair $(X,\Delta')$, such that $K_X+\Delta'\sim_{\Rr,T} \mu(K_X+\Delta)$ for some $\mu>0$ and $\Delta'$ is big. 
    \item  If $\Delta$ is big over $T$, then there exists a klt pair $(X,\Delta_0)$ such that $K_X+\Delta\sim_{\Rr,T}K_X+\Delta_0$ and $\Delta_0\geq A\geq 0$ for some $\mathbb R$-divisor $A$ that is ample over $T$. \qed
\end{enumerate} 
\end{lem}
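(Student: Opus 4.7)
The plan is to apply the relative Kodaira lemma in both parts to convert the bigness hypothesis into a decomposition of the form ``ample-over-$T$ plus effective'', and then perform a small perturbation of the boundary that preserves the klt property. Both parts are instances of this single idea.

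For (1), I would first invoke Kodaira's lemma (over $T$) to write $K_X+\Delta\sim_{\Rr,T} A+E$ with $A$ ample over $T$ and $E\geq 0$. Then, for a small $\varepsilon>0$ and a sufficiently general $A'\sim_{\Rr,T} A$, I set $\mu:=1+\varepsilon$ and $\Delta':=\Delta+\varepsilon(A'+E)$. A direct computation gives
\[
K_X+\Delta' \;=\; K_X+\Delta+\varepsilon(A'+E) \;\sim_{\Rr,T}\; (1+\varepsilon)(K_X+\Delta) \;=\; \mu(K_X+\Delta),
\]
and $\Delta'\geq\varepsilon A'$ makes $\Delta'$ big (over $T$). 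For (2), starting instead from $\Delta\sim_{\Rr,T} A+E$ with $A$ ample over $T$ and $E\geq 0$, I set $\Delta_0:=(1-\varepsilon)\Delta+\varepsilon(A'+E)$. The relation $\varepsilon\Delta\sim_{\Rr,T}\varepsilon(A'+E)$ yields $\Delta_0\sim_{\Rr,T}\Delta$ and hence $K_X+\Delta\sim_{\Rr,T} K_X+\Delta_0$, with $\Delta_0\geq\varepsilon A'$ providing the required ample-over-$T$ divisor $A_0:=\varepsilon A'$.

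The only subtle point, in both parts, is preserving the klt property after perturbation. I would handle this by fixing a log resolution $\pi\colon Y\to X$ of $(X,\Delta+E)$ and choosing $A'$ sufficiently general — possible because $A$ is ample over $T$, so that general members of a relatively very ample multiple of $A$ can be used to produce an effective $A'\sim_{\Rr,T} A$ whose pullback avoids the relevant strata. Only finitely many divisors on $Y$ then have discrepancies affected by the perturbation, and for each such $F$ one has $a(F,X,\Delta+\varepsilon(A'+E))=a(F,X,\Delta)-\varepsilon\cdot\mathrm{ord}_F\pi^*(A'+E)$, which is linear in $\varepsilon$. Taking $\varepsilon$ small enough relative to the (finitely many) original log discrepancies of $(X,\Delta)$ therefore keeps the perturbed pair klt. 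I do not anticipate any substantial obstacles; the lemma is essentially a routine application of Kodaira's lemma together with the standard small-perturbation trick.
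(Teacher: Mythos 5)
Your part (2) and the overall perturbation scheme are fine and match the paper's (very short) argument: the paper takes $\Delta_0=(1-t)\Delta+t(H+E)$ and $A=tH$ for $0<t\ll 1$, exactly as you do, and the klt preservation is the standard log-resolution/small-$\varepsilon$ argument you describe.

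There is, however, a genuine mismatch in part (1). The lemma asserts that $\Delta'$ is big on $X$ (absolutely), not merely big over $T$ — and this is the version the paper actually needs downstream, e.g.\ to invoke Theorem \ref{thm:index bdd} and Theorem \ref{thm:termination dim<=5}, both of which require $\Delta$ big. Your decomposition via the relative Kodaira lemma produces $A$ only ample over $T$, so $\Delta'\ge\varepsilon A'$ only yields bigness over $T$ (as your own parenthetical concedes); an $\bR$-divisor ample over $T$ need not be big on $X$ when $\dim T>0$. The gap is easily closed, and in fact you are working harder than necessary: by the paper's definition, ``$K_X+\Delta$ big over $T$'' already means $K_X+\Delta\sim_{\Rr,T}D$ for some $D$ that is big on $X$, so one may take $D\ge 0$ big and set $\Delta':=\Delta+\varepsilon D$ directly, with no Kodaira lemma at all — this is what the paper does. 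Alternatively, within your setup, replace $A'$ by $A'+\pi^*H_T$ for $H_T$ sufficiently ample on $T$: this is still $\sim_{\Rr,T}A$ but is now ample, hence big, on $X$. Either repair makes your argument complete.
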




We also introduce a more flexible version of the MMP, where each step of the MMP is a contraction of an extremal face rather than an extremal ray. To avoid confusion, we call such MMP as ``face-contracting MMP". Face-contracting MMP naturally appears in the proofs of the special termination (\textit{cf}. \cite{Fuj07a}). The work \cite{Kol21} also uses special face-contracting MMP in order to ensure that the scaling numbers strictly drop. We remark that projectivity and $\mathbb Q$-factoriality of the ambient variety are not automatically presumed in the face-contracting MMP.

\begin{defn}[Face-contracting MMP]\label{defn: face contracting mmp} 
Let $(X,\Delta)$ be a log pair. A \emph{(birational) face-contracting} $(K_X+\Delta)$-MMP is a sequence of birational contractions
\[
\begin{tikzcd}
(X,\Delta)=:(X_0,\Delta_0) \arrow[r,dashed,"\varphi_0"] & (X_1,\Delta_1) \arrow[r,dashed,"\varphi_1"] & \cdots \arrow[r,dashed] & (X_i,\Delta_i) \arrow[r,dashed,"\varphi_i"] & \cdots,
\end{tikzcd}
\]
where $(X_i,\Delta_i)$ is a log pair and $\Delta_{i+1}:=(\varphi_i)_* \Delta_i$ for each $i$, and each $\varphi_i$ fits into a diagram 
\[
\begin{tikzcd}[column sep=tiny, row sep=small]
 X_i \arrow[rr, dashed, "\varphi_i"] \arrow[rd, "f_i"'] & & X_{i+1}  \arrow[ld, "f_i^+"]\\ & Z_i &
\end{tikzcd}
\]
satisfying the following:
\begin{enumerate}
    \item $f_i$ and $f_i^+$ are birational fibrations,
    \item $f_i^+$ is small,
    \item $-(K_{X_i}+\Delta_i)$ is $f_i$-ample, and 
    \item $K_{X_{i+1}}+\Delta_{i+1}$ is  $f_i^+$-ample.
\end{enumerate}

\end{defn}

We show that every step of a face-contracting MMP is crepant equivalent to a sequence of divisorial contractions and flips. 

\begin{lem} \label{lem:lift MMP dlt}
Let $(X,\Delta)$ be a log canonical pair and let $\pi\colon (Y,D)\to (X,\Delta)$ be a birational morphism from a $\bQ$-factorial dlt pair such that $K_Y+D=\pi^*(K_X+\Delta)+E$ for some $\pi$-exceptional $\bR$-divisor $E\ge 0$. Then any face-contracting $(K_X+\Delta)$-MMP
\[
\begin{tikzcd}
(X,\Delta)=:(X_0,\Delta_0) \arrow[r,dashed,"\varphi_0"] & (X_1,\Delta_1) \arrow[r,dashed,"\varphi_1"] & \cdots \arrow[r,dashed] & (X_i,\Delta_i) \arrow[r,dashed,"\varphi_i"] & \cdots 
\end{tikzcd}
\]
can be lifted to a commutative diagram
\[
\begin{tikzcd}
(Y,D) =: (Y_0,D_0) \arrow[r,dashed,"\psi_0"] \arrow[d, shift left=5ex, "\pi_0=\pi"'] & (Y_1,D_1) \arrow[r,dashed,"\psi_1"] \arrow[d,"\pi_1"'] & \cdots \arrow[r,dashed] & (Y_i,D_i) \arrow[r,dashed,"\psi_i"] \arrow[d,"\pi_i"'] & \cdots \\
(X,\Delta) =: (X_0,\Delta_0) \arrow[r,dashed,"\varphi_0"] & (X_1,\Delta_1) \arrow[r,dashed,"\varphi_1"] & \cdots \arrow[r,dashed] & (X_i,\Delta_i) \arrow[r,dashed,"\varphi_i"] & \cdots 
\end{tikzcd}
\]
where each $(Y_i,D_i)$ is a $\bQ$-factorial dlt pair and each $\psi_i$ is a sequence of $(K_{Y_i}+D_i)$-MMP. Moreover, for all $i\ge 1$ we have $K_{Y_i}+D_i = \pi_i^*(K_{X_i}+\Delta_i)$.
\end{lem}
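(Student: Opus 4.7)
The plan is to construct the lift inductively on $i$, with the base case $i=0$ given. For the inductive step, assuming the diagram has been built up to stage $i$ with $K_{Y_i}+D_i=\pi_i^*(K_{X_i}+\Delta_i)+E_i$ (where $E_0=E$ and $E_i=0$ for $i\ge 1$) and $(Y_i,D_i)$ is $\bQ$-factorial dlt, I will construct $(Y_{i+1},D_{i+1})$, $\pi_{i+1}$, and $\psi_i$.

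When $i=0$, I will first run a $(K_Y+D)$-MMP over $X_0$ with scaling of an ample divisor, in order to eliminate the extra $\pi$-exceptional divisor $E$. Since $K_Y+D\sim_{\bR,X_0}E\ge 0$ is $\pi$-exceptional and effective, this relative MMP terminates by BCHM at a crepant $\bQ$-factorial dlt modification of $(X_0,\Delta_0)$; these steps form the initial part of $\psi_0$. After this reduction, I may assume $(Y_i,D_i)\to(X_i,\Delta_i)$ is already crepant before passing to the main step.

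For the main step of lifting the $i$-th face contraction, I will consider the composition $g_i:=f_i\circ\pi_i\colon Y_i\to Z_i$ and run a $(K_{Y_i}+D_i)$-MMP over $Z_i$ with scaling. The key observation driving the construction is that the log canonical (ample) model of $(Y_i,D_i)$ relative to $Z_i$ coincides with $(X_{i+1},\Delta_{i+1})$: this holds because $(Y_i,D_i)$ is crepant over $(X_i,\Delta_i)$, and the ample model of $(X_i,\Delta_i)$ over $Z_i$ is $(X_{i+1},\Delta_{i+1})$ by the face-contracting MMP setup. The MMP is then realized as a sequence of divisorial and flipping contractions of $(K_{Y_i}+D_i)$-negative extremal rays over $Z_i$, each existing by BCHM, with the sequence terminating by finite generation of the relative canonical ring. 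The resulting pair $(Y_{i+1},D_{i+1})$ is $\bQ$-factorial dlt, and the base-point-free theorem produces the morphism $\pi_{i+1}\colon Y_{i+1}\to X_{i+1}$ with $K_{Y_{i+1}}+D_{i+1}=\pi_{i+1}^*(K_{X_{i+1}}+\Delta_{i+1})$ by construction.

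The hardest part will be ensuring that the termination of this relative MMP lands at the desired crepant lift of $(X_{i+1},\Delta_{i+1})$ rather than at a Mori fiber space over $Z_i$. Since $K_{Y_i}+D_i$ is anti-big over $Z_i$ (being the pullback of the $f_i$-anti-ample $K_{X_i}+\Delta_i$), a naive $(K_{Y_i}+D_i)$-MMP with scaling of an ample divisor could in principle terminate at a Mori fiber space. The resolution is to use the existence of the specific ample model $(X_{i+1},\Delta_{i+1})$ over $Z_i$ as a target: by finite generation of the relative canonical ring combined with the base-point-free theorem, the MMP can be steered toward this ample model and must terminate there after finitely many steps, yielding the desired crepant $\bQ$-factorial dlt lift.
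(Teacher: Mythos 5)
Your proposal is correct and follows essentially the same route as the paper's proof: inductively identify $(X_{i+1},\Delta_{i+1})$ as the ample model of $(Y_i,D_i)$ over $Z_i$ (via the negativity lemma and the fact that adding the effective exceptional divisor $E$ does not change ample models), then run a relative $(K_{Y_i}+D_i)$-MMP with scaling over $Z_i$, which terminates at a $\bQ$-factorial dlt minimal model mapping crepantly onto the ample model. Your extra preliminary step of first contracting $E$ over $X_0$ is a harmless variant of the paper's direct appeal to the invariance of ample models, and your worry about ending at a Mori fiber space is vacuous since $Y_i\to Z_i$ is birational, so $K_{Y_i}+D_i$ is automatically big over $Z_i$.
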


\begin{proof}
Let $f_i: X_i\rightarrow Z_i$ and $f_i^+: X_{i+1}\rightarrow Z_i$ be the birational fibrations associated to $\varphi_i$. We prove the lemma by induction on $i$. When $i=0$, we simply set $(Y_0,D_0):=(Y,D)$ and $\pi_0:=\pi$. For $i\ge 1$, suppose that $(Y_{i-1},D_{i-1})$ has been constructed. By \cite{KM98}*{Lemma 3.38} and \cite{BCHM}*{Lemma 3.6.6(4)}, $(X_i,\Delta_i)$ is the ample model over $Z_{i-1}$ of $(Y_{i-1},D_{i-1})$. In particular, $(Y_{i-1},D_{i-1})$ has a minimal model over $Z_{i-1}$. Since the pair $(Y_{i-1},D_{i-1})$ is $\bQ$-factorial dlt by induction hypothesis, by \cite{Bir12lcflip}*{Corollary 1.2 and Theorem 1.9} or \cite[Corollaries 1.8 and 2.9]{HX13}, we may run a $(K_{Y_{i-1}}+D_{i-1})$-MMP over $Z_{i-1}$ with scaling of an ample divisor, and the MMP terminates with a $\bQ$-factorial dlt minimal model over $Z_{i-1}$. This gives $\psi_{i-1}: (Y_{i-1},D_{i-1})\dashrightarrow (Y_i,D_i)$, where $K_{Y_i}+D_i$ is nef over $Z_{i-1}$. Then $(X_i,\Delta_i)$ is the ample model over $Z_{i-1}$ of $(Y_i,D_i)$, so there exists an induced birational morphism $\pi_i:Y_i\to X_i$ such that $K_{Y_i}+D_i=\pi_i^{*}(K_{X_i}+\Delta_i)$. 
This finishes the induction.
\end{proof}

We can also lift an MMP to the terminalization at the price of possibly changing the terminal boundary during the process.

\begin{defn}
A \emph{$\mathbb Q$-factorial terminalization} of a klt pair $(X,\Delta)$ is a projective crepant birational morphism $\pi\colon  (Y,\Delta_Y)\rightarrow (X,\Delta)$ such that  $(Y,\Delta_Y)$ is a $\mathbb Q$-factorial terminal pair. We also say that $(Y,\Delta_Y)$ is a \emph{$\mathbb Q$-factorial terminalization} of $(X,\Delta)$.
\end{defn}

\begin{lem} \label{lem:lift MMP terminal}
Let $(X,\Delta)$ be a klt pair and let $(Y,D)$ be a $\bQ$-factorial terminal pair with a projective birational morphism $\pi\colon Y\rightarrow X$ such that $K_Y+D= \pi^*(K_X+\Delta)+E$ for some $\pi$-exceptional $\mathbb R$-divisor $E\ge 0$. Then any face-contracting $(K_X+\Delta)$-MMP
\[
\begin{tikzcd}
(X,\Delta)=:(X_0,\Delta_0) \arrow[r,dashed,"\varphi_0"] & (X_1,\Delta_1) \arrow[r,dashed,"\varphi_1"] & \cdots \arrow[r,dashed] & (X_i,\Delta_i) \arrow[r,dashed,"\varphi_i"] & \cdots 
\end{tikzcd}
\]
can be lifted to a commutative diagram
\[
\begin{tikzcd}
(Y,D) =: (Y_0,D_0) \arrow[r,dashed,"\psi_0"] \arrow[d, shift left=5ex, "\pi_0=\pi"'] & (Y_1,D_1) \arrow[r,dashed,"\psi_1"] \arrow[d,"\pi_1"'] & \cdots \arrow[r,dashed] & (Y_i,D_i) \arrow[r,dashed,"\psi_i"] \arrow[d,"\pi_i"'] & \cdots \\
(X,\Delta) =: (X_0,\Delta_0) \arrow[r,dashed,"\varphi_0"] & (X_1,\Delta_1) \arrow[r,dashed,"\varphi_1"] & \cdots \arrow[r,dashed] & (X_i,\Delta_i) \arrow[r,dashed,"\varphi_i"] & \cdots 
\end{tikzcd}
\]
where each $\psi_i$ is a sequence of $(K_{Y_i}+B_i)$-MMP with $0\le B_i\le D_i$ defined by
\begin{equation} \label{eq:B_i}
    \mult_F B_i = \max\{0,-a(F,X_{i+1},\Delta_{i+1})\}
\end{equation}
for all prime divisors $F$ on $Y_i$, the map $\psi_i$ does not contract any component of $B_i$, and $D_{i+1}=(\psi_i)_* B_i$. Moreover, $\pi_i: (Y_i,D_i)\rightarrow (X_i,\Delta_i)$ is a $\bQ$-factorial terminalization for every $i\ge 1$.
\end{lem}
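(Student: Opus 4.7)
I would argue by induction on $i$, with the base case $i = 0$ given by setting $(Y_0, D_0) := (Y, D)$ and $\pi_0 := \pi$. For the inductive step, assume $\pi_i\colon (Y_i, D_i) \to (X_i, \Delta_i)$ has been constructed with $K_{Y_i} + D_i = \pi_i^*(K_{X_i} + \Delta_i) + E_i$ for some $\pi_i$-exceptional $E_i \ge 0$ (with $E_i = 0$ for $i \ge 1$), define $B_i$ via \eqref{eq:B_i}, and first verify $B_i \le D_i$. For any prime divisor $F$ on $Y_i$, the effectivity of $E_i$ gives $\mult_F D_i \ge -a(F, X_i, \Delta_i)$; since the composite $\varphi_i \circ \cdots \circ \varphi_0$ is an MMP type contraction, discrepancies can only increase, yielding $\mult_F D_i \ge -a(F, X_{i+1}, \Delta_{i+1})$. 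Combined with $D_i \ge 0$, this gives $\mult_F D_i \ge \mult_F B_i$. Consequently $(Y_i, B_i)$ is $\bQ$-factorial terminal.

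The key step is to identify $(X_{i+1}, \Delta_{i+1})$ as the ample model of $(Y_i, B_i)$ over $Z_i$, from which \cite{BCHM} yields a $(K_{Y_i} + B_i)$-MMP over $Z_i$ with scaling that terminates at $(Y_{i+1}, D_{i+1})$ with $D_{i+1} := (\psi_i)_* B_i$ and $\pi_{i+1}\colon Y_{i+1} \to X_{i+1}$ the induced morphism. The ample model identification requires three ingredients: (i) the pushforward $h_* B_i = \Delta_{i+1}$ under the induced rational map $h\colon Y_i \dashrightarrow X_{i+1}$, using that every prime divisor on $X_{i+1}$ has a strict transform on $Y_i$ (guaranteed by the terminalization property of $\pi_i$ together with MMP monotonicity of discrepancies, which ensures that any divisor over $X_{i+1}$ with discrepancy $\le 0$ already appears on $Y_i$); (ii) ampleness of $K_{X_{i+1}} + \Delta_{i+1}$ over $Z_i$ by hypothesis on the face-contracting MMP; and (iii) effectivity of $p^*(K_{Y_i} + B_i) - q^*(K_{X_{i+1}} + \Delta_{i+1})$ on a common resolution $p\colon W \to Y_i$, $q\colon W \to X_{i+1}$. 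For strict transforms of primes $F$ on $Y_i$, the coefficient of (iii) equals $\max\{a(F, X_{i+1}, \Delta_{i+1}), 0\} \ge 0$ by direct computation from \eqref{eq:B_i}; for $p$-exceptional $G$, the terminalization property of $\pi_i$ forces $a(G, X_i, \Delta_i) > 0$, and effectivity then reduces, via the effective MMP-increase divisor $E_{i\to i+1}$ associated to $\varphi_i$, to the componentwise bound $\mult_F(D_i - B_i) \le a(F, X_{i+1}, \Delta_{i+1}) - a(F, X_i, \Delta_i)$ for prime $F$ on $Y_i$, together with its extension to divisorial valuations lying over $Y_i$.

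Finally, since the $(K_{Y_i} + B_i)$-MMP contracts exactly the divisors with $a(F, X_{i+1}, \Delta_{i+1}) > 0$ and no components of $B_i$, every surviving prime divisor on $Y_{i+1}$ has $a(F, X_{i+1}, \Delta_{i+1}) \le 0$, so $\mult_F D_{i+1} = \mult_F B_i = -a(F, X_{i+1}, \Delta_{i+1})$ and $K_{Y_{i+1}} + D_{i+1} = \pi_{i+1}^*(K_{X_{i+1}} + \Delta_{i+1})$; combined with $\bQ$-factorial terminality inherited from $(Y_i, B_i)$ and the fact that all exceptional divisors over $Y_{i+1}$ have strictly positive discrepancy in $(X_{i+1}, \Delta_{i+1})$ by MMP monotonicity, this shows that $(Y_{i+1}, D_{i+1})$ is a $\bQ$-factorial terminalization of $(X_{i+1}, \Delta_{i+1})$, completing the induction. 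I expect the main technical obstacle to be the effectivity verification (iii) at $p$-exceptional divisors, where a careful compatibility between the valuation of $D_i - B_i$ and the MMP-increase divisor $E_{i\to i+1}$ must be established, essentially via the negativity lemma applied to the $p$-exceptional difference.
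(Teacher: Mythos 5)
Your proposal is correct and follows essentially the same route as the paper's proof: induct on $i$, show $0\le B_i\le D_i$ via effectivity of $E$ and MMP-monotonicity of discrepancies, identify $(X_{i+1},\Delta_{i+1})$ as the ample model of $(Y_i,B_i)$ over $Z_i$, run a $(K_{Y_i}+B_i)$-MMP over $Z_i$ to a minimal model, and use the discrepancy chain $\mult_F B_i=-a(F,X_{i+1},\Delta_{i+1})=-a(F,Y_{i+1},D_{i+1})\le -a(F,Y_i,B_i)=\mult_F B_i$ to see that no component of $B_i$ is contracted and that $(Y_{i+1},D_{i+1})$ is a crepant terminal model. The only step you leave open --- effectivity of $p^*(K_{Y_i}+B_i)-q^*(K_{X_{i+1}}+\Delta_{i+1})$ along $p$-exceptional divisors --- does close exactly as you anticipate: its pushforward to $Y_i$ is effective by your componentwise computation, and its negative is $p$-nef (since $q^*(K_{X_{i+1}}+\Delta_{i+1})$ is nef over $Z_i$ and the $p$-pullback term is $p$-trivial), so the negativity lemma gives effectivity; the paper packages this hands-on verification into citations of \cite{KM98}*{Lemma 3.38} and \cite{BCHM}*{Lemma 3.6.6(4)}.
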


\begin{proof}
The proof is similar to that of Lemma \ref{lem:lift MMP dlt} and proceeds by induction on $i$. Let $f_i: X_i\rightarrow Z_i$ and $f_i^+: X_{i+1}\rightarrow Z_i$ be the associated birational fibrations of $\varphi_i$. When $i=0$, set $(Y_0,D_0):=(Y,D)$ and $\pi_0:=\pi$. For $i\ge 1$, suppose that $(Y_{i-1},D_{i-1})$ has been constructed. Define $B_{i-1}$ by \eqref{eq:B_i}. Clearly $B_{i-1}\ge 0$. By the negativity lemma and the induction hypothesis that $K_{Y_{i-1}}+D_{i-1}\ge \pi_{i-1}^*(K_{i-1}+\Delta_{i-1})$, we have
\[
a(F,Y_{i-1},D_{i-1})\le a(F,X_{i-1},\Delta_{i-1})\le a(F,X_i,\Delta_i)
\]
for all divisors $F$ over $X$, hence $B_{i-1}\le D_{i-1}$. By \cite{KM98}*{Lemma 3.38} and \cite{BCHM}*{Lemma 3.6.6(4)}, $(X_i,\Delta_i)$ is the ample model over $Z_{i-1}$ of $(Y_{i-1},B_{i-1})$. Thus as in the proof of Lemma \ref{lem:lift MMP dlt}, by \cite{Bir12lcflip}*{Theorem 1.9} we may run a $(K_{Y_{i-1}}+B_{i-1})$-MMP $\psi_{i-1}: (Y_{i-1},B_{i-1})\dashrightarrow (Y_i,D_i)$ over $Z_{i-1}$, where $D_i:=(\psi_{i-1})_{*} B_{i-1}$, such that $K_{Y_i}+D_i$ is nef over $Z_{i-1}$ and $(X_i,\Delta_i)$ is the ample model over $Z_{i-1}$ of $(Y_i,D_i)$, which gives the morphism $\pi_i$. If $F$ is a component of $B_{i-1}$, then we have
\[
\mult_F B_{i-1}=-a(F,X_{i},\Delta_{i})=-a(F,Y_{i},D_{i})\le -a(F,Y_{i-1},B_{i-1})=\mult_F B_{i-1},
\]
thus the inequality in the middle is an equality; in particular, $F$ is not contracted by the MMP. By \cite{KM98}*{Corollary 3.43}, $(Y_i,D_i)$ is terminal. We finish the induction.
\end{proof}

The following corollary of the negativity lemma is needed in the proof of special termination in Section \ref{sec:tof dim 5}. 


\begin{lem} \label{lem:one step MMP isom along div}
Consider a commutative diagram
\[
\begin{tikzcd}[column sep=tiny, row sep=small]
 X \arrow[rr, dashed, "\varphi"] \arrow[rd, "f"'] & & X^+ \arrow[ld, "f^+"]\\ & Z &
\end{tikzcd}
\]
where $f,f^+$ are birational fibrations. Let $S$ be a prime divisor on $X$ such that $\varphi$ is an isomorphism at the generic point of $S$, and let $S^+$ be the strict transform of $S$ on $X^+$. Let $\Delta\ge 0$ be an $\bR$-divisor on $X$ such that $(X,S+\Delta)$ is dlt, and let $\Delta^+\ge 0$ be an $\bR$-divisor on $X^+$. 
Assume that
\begin{enumerate}
    \item $f_*\Delta = f^+_*\Delta^+$,
    \item $-(K_X+S+\Delta)$ is $f$-ample,
    \item $K_{X^+}+S^++\Delta^+$ is $f^+$-ample, and
    \item the induced birational map $(S,\Diff_S(\Delta))\dashrightarrow (S^+,\Diff_{S^+}(\Delta^+))$ extends to an isomorphism of pairs.
\end{enumerate}
Then $\varphi$ extends to an isomorphism around $S$.
\end{lem}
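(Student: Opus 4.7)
The strategy is to take a common log resolution of $\varphi$, apply the negativity lemma to a natural discrepancy divisor, and restrict along the strict transform of $S$ using adjunction. Let $p\colon W\to X$, $q\colon W\to X^+$ be a common log resolution with $g:=f\circ p=f^+\circ q$, and let $\widetilde{S}$ denote the strict transform of $S$ on $W$, which coincides with that of $S^+$ since $\varphi$ is an isomorphism at the generic point of $S$. Denote by $\sigma$ the isomorphism of pairs in (4); since $\sigma$, $p|_{\widetilde{S}}$ and $q|_{\widetilde{S}}$ all induce the same identification of function fields, the morphisms $\sigma\circ p|_{\widetilde{S}}$ and $q|_{\widetilde{S}}$ agree on $\widetilde{S}$ by separatedness. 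Set $L:=K_X+S+\Delta$, $L^+:=K_{X^+}+S^++\Delta^+$, and $D:=p^*L-q^*L^+$.

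By assumptions (2) and (3), $p^*L$ is $g$-antinef and $q^*L^+$ is $g$-nef, so $-D$ is $g$-nef. Choose canonical divisors compatibly, so that $f_*K_X=K_Z=f^+_*K_{X^+}$; together with $f_*S=f^+_*S^+$ (both are zero if $f$ contracts $S$, which happens iff $f^+$ contracts $S^+$ since $f(S)=f^+(S^+)$, and otherwise both equal $f(S)=f^+(S^+)$) and hypothesis (1), this gives $g_*D=0$, so $D\ge 0$ by the negativity lemma. Now, by adjunction along $S$ for the dlt pair $(X,S+\Delta)$ and along $S^+$ for $(X^+,S^++\Delta^+)$ (with $\Diff_{S^+}(\Delta^+)$ defined via $L^+|_{S^+}$), combined with the identity $\sigma\circ p|_{\widetilde{S}}=q|_{\widetilde{S}}$, one computes $D|_{\widetilde{S}}=0$. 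Since the coefficient of $\widetilde{S}$ in $D$ is $0$ (the contributions $1-1=0$ from $p^*S$ and $q^*S^+$), $\Supp D\cap\widetilde{S}=\emptyset$.

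To finish, let $V\subseteq W$ be an open neighborhood of $\widetilde{S}$ disjoint from $\Supp D$, so $p^*L=q^*L^+$ on $V$. For any curve $C\subset V$ contracted by $p$, the equality $q^*L^+\cdot C=p^*L\cdot C=0$ and the $f^+$-ampleness of $L^+$ force $q$ to also contract $C$ (otherwise $q_*C$ would be a nonzero $f^+$-contracted curve with $L^+\cdot q_*C>0$); the symmetric implication uses the $f$-ampleness of $-L$. Hence $p|_V$ and $q|_V$ induce the same contraction, and by the rigidity lemma $\varphi$ extends to an isomorphism between open neighborhoods of $S$ and $S^+$. I expect the main subtlety to lie in the adjunction step: one must interpret $p^*L|_{\widetilde{S}}$ as an $\mathbb{R}$-Cartier restriction consistent with adjunction, isolating the coefficient-$1$ contribution of $\widetilde{S}$ in $p^*L$ to make sense of the restriction before applying the identity $\sigma^*(K_{S^+}+\Diff_{S^+}(\Delta^+))=K_S+\Diff_S(\Delta)$.
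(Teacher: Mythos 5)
Your setup and the first two-thirds of the argument follow the paper's proof almost exactly: common resolution, $D:=p^*L-q^*L^+$ with $-D$ nef over $Z$ and $g_*D=0$, hence $D\ge 0$ by negativity, and then $D|_{\widetilde S}=0$ via adjunction and hypothesis (4). Up to the conclusion $\Supp D\cap\widetilde S=\emptyset$, this is correct (and on the smooth model $W$ the passage from $D|_{\widetilde S}=0$ and $\mult_{\widetilde S}D=0$ to $\Supp D\cap\widetilde S=\emptyset$ is legitimate).

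The gap is in the final step. You choose $V$ to be an \emph{arbitrary} open neighborhood of $\widetilde S$ disjoint from $\Supp D$, and then only compare the curves contracted by $p$ and by $q$ that are \emph{contained in} $V$. But $V$ is not saturated with respect to $p$, $q$, or $g$ (i.e.\ $V\ne p^{-1}(p(V))$ in general), so ``$p|_V$ and $q|_V$ induce the same contraction'' is not meaningful and the rigidity lemma does not apply: rigidity would require that every curve contracted by $p$ \emph{lying over a neighborhood of $S$} be contracted by $q$, not merely those curves that happen to lie inside $V$. Concretely, your argument never excludes the scenario that $f$ (or $f^+$) contracts a curve $C$ with $C\cap S\ne\emptyset$ but $C\not\subseteq S$; the relevant curves in $W$ dominating $C$ need not be contained in $V$ and are not contracted by $p$, so your dichotomy never sees them — yet this is exactly the situation that would prevent $\varphi$ from extending to an isomorphism around $S$. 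The missing ingredient is the stronger, fiberwise form of the negativity lemma (\cite[Lemma 3.39]{KM98}): since $D\ge 0$ is $\psi$-exceptional with $-D$ nef over $Z$ (where $\psi:=g$), every fiber of $\psi$ that meets $\Supp D$ is entirely contained in $\Supp D$; combined with $\Supp D\cap\widetilde S=\emptyset$ this yields $\psi(\Supp D)\cap\psi(\widetilde S)=\emptyset$, so the crepancy $p^*L=q^*L^+$ holds over an entire $\psi$-saturated neighborhood of $f(S)$ in $Z$. Only with this can one run the comparison of discrepancies (or your contraction argument) against the strict inequality $a(E,X,S+\Delta)<a(E,X^+,S^++\Delta^+)$ furnished by \cite[Lemma 3.38]{KM98} for divisors with center in $\Ex(f)$ meeting $S$, which is how the paper concludes that $f$ and $f^+$ are isomorphisms near $S$ and $S^+$. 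So the ``main subtlety'' is not the adjunction step you flag at the end, but the saturation of the locus where the two pullbacks agree.
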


\begin{proof}
Let $g\colon Y\rightarrow X$ and $h\colon Y\rightarrow X^+$ be a common resolution such that $h=\varphi\circ g$. Let $\psi\colon Y\to Z$ be the induced morphism, i.e. $\psi:=f\circ g=f^+\circ h$. Write
\[
g^*(K_X+S+\Delta)=h^*(K_{X^+}+S^++\Delta^+)+E
\]
for some $\mathbb R$-divisor $E$. Let $S_Y:=g^{-1}_*S$, then we also have $S_Y=h^{-1}_*S^+$, hence $S_Y\not\subseteq \Supp(E)$. By adjunction, we get
\[
g|_{S_Y}^*(K_S+\Diff_S(\Delta))=h|_{S_Y}^*(K_{S^+}+\Diff_{S^+}(\Delta^+))+E|_{S_Y}.
\]
By assumption (4), we also have 
\[
g|_{S_Y}^*(K_S+\Diff_S(\Delta)) = h|_{S_Y}^*(K_{S^+}+\Diff_{S^+}(\Delta^+)).
\]
Combined with the previous equality we see that $E|_{S_Y}=0$.  By assumption (1), the $\bR$-divisor $E$ is exceptional over $Z$. By assumptions (2) and (3), $-E$ is nef over $Z$, hence by \cite[Lemma 3.39]{KM98}, we have $E\geq 0$, and for any closed point $z\in Z$, either $\psi^{-1}(z)\cap\Supp(E)=\emptyset$ or $\psi^{-1}(z)\subseteq\Supp(E)$. Since $E|_{S_Y}=0$, this implies that 
\[
\psi(\Supp(E))\cap \psi(S_Y)=\emptyset.
\]

Therefore, for any prime divisor $D$ over $Z$ with $\Center_Z(D)\cap \psi(S_Y)\neq\emptyset$ we have
\begin{equation} \label{eq:crepant near S}
    a(D,X,S+\Delta)=a(D,X^+,S^++\Delta^+).
\end{equation}
    
If $f$ is not an isomorphism around $S$, then there exists a prime divisor $D$ over $X$ such that $\Center_X (D)\cap S\neq\emptyset$ and $\Center_X (D)\subseteq\Ex(f)$. In particular, 
\[
\Center_Z(D)\cap \psi(S_Y) = f(\Center_X (D)) \cap f(S)\neq\emptyset.
\]
The negativity lemma \cite[Lemma 3.38]{KM98} and assumptions (2) and (3) then gives 
\[
a(D,X,S+\Delta)<a(D,X^+,S^++\Delta^+),
\]
which contradicts \eqref{eq:crepant near S}. We get a similar contradiction if $f^+$ is not an isomorphism around $S^+$. Thus $f$ and $f^+$ are isomorphisms around $S$ and $S^+$ respectively, and hence $\varphi$ is an isomorphism near $S$.
\end{proof}

We also need the termination of $4$-fold terminal MMP.

\begin{lem} \label{lem: 4fold terminal mmp terminates}
Let $(X,\Delta)$ be a $\mathbb Q$-factorial terminal pair of dimension $\le 4$. Then every $(K_X+\Delta)$-MMP
\[
\begin{tikzcd}
(X,\Delta)=:(X_0,\Delta_0) \arrow[r,dashed,"\varphi_0"] & (X_1,\Delta_1) \arrow[r,dashed,"\varphi_1"] & \cdots \arrow[r,dashed] & (X_i,\Delta_i) \arrow[r,dashed,"\varphi_i"] & \cdots 
\end{tikzcd}
\]
where $(X_i,\Delta_i)$ is terminal for all $i$, must terminate after finitely many steps.
\end{lem}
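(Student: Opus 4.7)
The strategy would be to reduce to a sequence of flips, and then to invoke the classical termination of terminal flips in dimension at most four.

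First, I would observe that each divisorial contraction strictly decreases the Picard number $\rho(X_i)$, so only finitely many of the $\varphi_i$ can be divisorial contractions. After truncating an initial finite segment of the MMP, we may therefore assume that every $\varphi_i$ is a flip.

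It then remains to show termination of flips under the standing hypothesis that every $(X_i,\Delta_i)$ is $\mathbb Q$-factorial terminal. When $\dim X\le 3$, termination of arbitrary klt flips is classical by Shokurov \cites{Sho85,Sho96} and Kawamata \cite{Kaw-termination}, so nothing further is needed. When $\dim X = 4$, I would appeal to the termination of terminal flips established by Kawamata-Matsuda-Matsuki \cite{KMM87} (see also \cites{Fuj05,AHK07} for alternative proofs); this applies directly to our situation because the given MMP is assumed to preserve terminality of the pair at each step, which is exactly the hypothesis in those theorems.

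The main conceptual input, and the obstacle overcome by the classical proofs in dimension four, is Shokurov's difficulty function
\[
d(X,\Delta) := \#\{E \text{ prime divisor over } X : 0 < a(E,X,\Delta) < 1\},
\]
which is finite for any terminal pair (each such $E$ must be extracted on a fixed $\mathbb Q$-factorial terminalization), is nonincreasing along terminal flips by the negativity lemma applied to a common resolution of the flip diagram, and in dimension at most four can be shown to strictly decrease after finitely many flips. Rather than reproducing this delicate dimension-four argument, I would simply cite the above references.
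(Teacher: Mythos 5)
Your proposal is correct and takes essentially the same route as the paper: the paper's entire proof is a one-line citation to Fujino's termination of fourfold (log) terminal flips \cites{Fuj04,Fuj05}, together with the remark that those arguments, written for projective varieties and $\mathbb Q$-divisors, go through verbatim in the quasi-projective $\mathbb R$-coefficient setting of this lemma; your reduction to flips via the Picard number and subsequent appeal to the same literature is the identical strategy. One caution about your background remark: the difficulty $\#\{E : 0<a(E,X,\Delta)<1\}$ as you define it is \emph{infinite} whenever $\Delta\neq 0$, because blowing up any codimension-one point of the smooth locus of a component $B_i$ of $\Delta$ with coefficient $b_i>0$ produces a divisor with discrepancy $1-b_i<1$, and such divisors are not extracted on a terminalization (only those with $a\le 0$ are). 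This is exactly why the termination proofs for pairs (and Definition \ref{defn:rho,d,s} of this paper) work with a modified difficulty that excludes these ``echoes''; since you ultimately defer to the references rather than to this invariant, the proof stands, but the justification of finiteness as stated would fail.
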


\begin{proof}
The same proof of \cites{Fuj04,Fuj05} in the projective setting for $\Qq$-divisors works here verbatim.
\end{proof}

\subsection{Explicit index bounds in dimension $3$}

In this subsection, we show that the Cartier index is uniformly bounded in $3$-dimensional MMP (without any general type or bigness assumption). For any klt pair $(X,\Delta)$, we set
\begin{equation} \label{eq:e(X,Delta)}
    e(X,\Delta):=\#\left\{ \mbox{Exceptional prime divisors $E$ over $X$ with $a(E,X,\Delta)\le 0$} \right\}.
\end{equation}
We also define $e_+(X,\Delta)$ as the sum of $e(X,\Delta)$ and the number of irreducible components of $\Supp(\Delta)$. By \cite[Proposition 2.36]{KM98}, $e(X,\Delta)<+\infty$.

\begin{lem} \label{lem:index klt 3-fold}
Let $(X,\Delta)$ be a klt pair of dimension $3$. Let $m\in\bN$ and $\varepsilon>0$. Assume that there are at most $m$ divisors $E$ over $X$ such that $a(E,X,\Delta)\in (0,\varepsilon)$. Then the Cartier index of any $\bQ$-Cartier Weil divisor on $X$ is at most
\[
\left(\left\lceil \frac{m+2}{\varepsilon}\right\rceil !\right)^{2^{e(X,\Delta)}} \left(\frac{3}{\mld(X,\Delta)} \right)^{2^{e(X,\Delta)}-1}.
\]
\end{lem}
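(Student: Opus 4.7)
The plan is to induct on $e := e(X,\Delta)$, establishing the inductive inequality $N_e \leq (3/\mld(X,\Delta))\cdot N_{e-1}^2$ with base case $N_0 = \lceil(m+2)/\varepsilon\rceil!$; the stated bound then follows by iteration, using $\sum_{i=0}^{e-1}2^i = 2^e-1$.

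In the base case $e=0$, the pair $(X,\Delta)$ is terminal, so in dimension $3$ the singular locus of $X$ is a finite set of points, and I would work locally at each closed point $x\in X$. Denoting by $N_x$ the local Gorenstein index of $X$ at $x$, every $\bQ$-Cartier Weil divisor has local Cartier index dividing $N_x$, so the global index divides $\mathrm{lcm}\{N_x : x\in X\}$. I then appeal to Reid's classification of terminal $3$-fold singularities (equivalently, the terminal lemma applied to the index-$N_x$ cyclic cover) to show that a singularity of index $N_x$ produces at least $\lfloor \varepsilon N_x\rfloor - 2$ exceptional prime divisors with log discrepancy in $(0,\varepsilon)$. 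Combined with the hypothesis of at most $m$ such divisors, this forces $N_x\leq \lceil(m+2)/\varepsilon\rceil$, and hence a global Cartier index bounded by $\lceil(m+2)/\varepsilon\rceil!$.

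For the inductive step $e\geq 1$, I would pick an exceptional divisor $E$ over $X$ with $a(E,X,\Delta)\leq 0$ and extract it via a projective birational morphism $f\colon Y\to X$ with $\Ex(f)=E$ and $Y$ $\bQ$-factorial, using \cite{BCHM}. Setting $\Delta_Y := f^{-1}_*\Delta - a(E,X,\Delta)\cdot E$ produces a klt crepant model of $(X,\Delta)$: one has $a(F,Y,\Delta_Y)=a(F,X,\Delta)$ for every divisor $F$ over $X$, so $\mld(Y,\Delta_Y) = \mld(X,\Delta)$ and the at-most-$m$ condition on divisors with log discrepancy in $(0,\varepsilon)$ is preserved on $Y$. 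Since $E$ becomes a prime divisor on $Y$ (hence no longer exceptional over $Y$), we have $e(Y,\Delta_Y) \leq e(X,\Delta)-1$, so induction yields the bound $N := (\lceil(m+2)/\varepsilon\rceil!)^{2^{e-1}}(3/\mld(X,\Delta))^{2^{e-1}-1}$ on the Cartier index of every $\bQ$-Cartier Weil divisor on $Y$.

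For any $\bQ$-Cartier Weil divisor $D$ on $X$, I would write $f^*D = f^{-1}_*D + \lambda E$ with $\lambda\in\bQ$. Since $f$ is birational projective, the Cartier index of $D$ on $X$ equals that of $f^*D$ on $Y$, and since $Nf^{-1}_*D$ and $NE$ are both Cartier on $Y$, the problem reduces to bounding the denominator $q$ of $\lambda$ in lowest terms. Picking a curve $C\subset E$ contracted by $f$, the formula $\lambda = -(f^{-1}_*D\cdot C)/(E\cdot C)$ combined with an adjunction-type estimate on the exceptional surface $E$ (exploiting klt-ness of $(Y,\Delta_Y)$ and $\mld(Y,\Delta_Y)=\mld(X,\Delta)$) should yield $|E\cdot C|\geq \mld(X,\Delta)/3$, giving $q\leq 3N/\mld(X,\Delta)$ and therefore $N_e\leq (3/\mld(X,\Delta))\cdot N^2$, completing the induction. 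The main obstacles are the terminal-lemma count in the base case and the lower bound $|E\cdot C|\geq \mld(X,\Delta)/3$ in the inductive step, the latter requiring a careful choice of curve $C$ inside the extracted surface $E$ and a dimension-$3$-specific adjunction argument.
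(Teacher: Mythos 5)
Your overall architecture is the one the paper intends: its proof is a one\-line pointer to the induction on $e(X,\Delta)$ from \emph{Flips and abundance}, Theorem (6.11.5), with the terminal base case replaced by Shokurov's Corollary 3.4, and your base case (each terminal point of local index $N_x$ carries roughly $\varepsilon N_x-2$ exceptional divisors of discrepancy in $(0,\varepsilon)$, forcing $N_x\le\lceil (m+2)/\varepsilon\rceil$, then take the lcm) together with the recursion $N_e\le (3/\mld(X,\Delta))N_{e-1}^2$ is the right skeleton, and the bookkeeping $\sum_{i<e}2^i=2^e-1$ matches the stated bound.

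There is, however, a genuine gap in the inductive step: your key estimate points in the wrong direction, and the conclusion you draw from it does not follow. Writing $f^*D=f^{-1}_*D+\lambda E$ and pairing with a contracted curve $C\subseteq E$ with $(E\cdot C)<0$, you get $\lambda=-(Nf^{-1}_*D\cdot C)/(NE\cdot C)$, a ratio of two \emph{integers} (both $Nf^{-1}_*D$ and $NE$ are Cartier by induction); hence the reduced denominator $q$ of $\lambda$ divides $|N(E\cdot C)|=N\,|E\cdot C|$. What controls $q$ is therefore an \emph{upper} bound $0<-(E\cdot C)\le 3/\mld(X,\Delta)$, whereas you assert the lower bound $|E\cdot C|\ge \mld(X,\Delta)/3$ and claim it yields $q\le 3N/\mld(X,\Delta)$ --- a non sequitur, since a lower bound on $|E\cdot C|$ only makes the integer $N|E\cdot C|$ larger and gives no control on $q$. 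Because you single this lower bound out as one of the two ``main obstacles'' still to be proved, this reads as a conceptual error in the plan rather than a typo. The correct estimate is exactly where the constant $3$ comes from: taking $C$ to be a general fiber of $E\to f(E)$ when $f(E)$ is a curve, or an extremal rational curve of the adjoint surface pair on $E$ when $f(E)$ is a point, one has $(K_Y+E)\cdot C=(K_{E}+\Diff)\cdot C\ge -3$; combined with $K_Y+\Delta_Y\equiv_f 0$ and $f^{-1}_*\Delta\cdot C\ge 0$ this gives $(1+a(E,X,\Delta))(E\cdot C)\ge -3$, i.e. $-(E\cdot C)\le 3/(1+a(E,X,\Delta))\le 3/\mld(X,\Delta)$. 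With the inequality reversed and proved this way, the rest of your argument goes through (note also that the unstated direction of ``the Cartier index of $D$ equals that of $f^*D$'' --- descending Cartier-ness from $Y$ to $X$ --- requires the relative contraction/base-point-free theorem after perturbing $\Delta_Y$ by $\epsilon E$ to make $f$ a negative extremal contraction).
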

\begin{proof}
This follows from the same proof of \cite{K+-flip-abundance}*{Theorem (6.11.5)}, replacing the base case (i.e. when $e(X,\Delta)=0$ and hence $(X,\Delta)$ is terminal) \cite{K+-flip-abundance}*{Lemma 6.9.7} of the induction by \cite{Sho96}*{Corollary 3.4}.
\end{proof}

\begin{cor} \label{cor:index 3-fold MMP}
Let $(X,\Delta)$ be a klt pair of dimension $3$. Let $\varepsilon$ be the smallest positive discrepancy of $(X,\Delta)$ (i.e. for any prime divisor $E$ over $X$ we have $a(E,X,\Delta)>0$ if and only if $a(E,X,\Delta)\ge \varepsilon$). Then in any $(K_X+\Delta)$-MMP, the Cartier index of any $\bQ$-Cartier Weil divisor is at most
\[
\left(\left\lceil \frac{e_+(X,\Delta)+3}{\varepsilon}\right\rceil !\right)^{2^{1+e_+(X,\Delta)}}.
\]
\end{cor}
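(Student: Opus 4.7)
The plan is to apply Lemma \ref{lem:index klt 3-fold} to each pair $(X_i,\Delta_i)$ arising in the MMP, bounding the invariants there uniformly in terms of $(X,\Delta)$. The key tool is the negativity lemma, which implies that for every prime divisor $F$ over $X$, the sequence $a(F,X_i,\Delta_i)$ is non-decreasing in $i$.

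From monotonicity I deduce $\mld(X_i,\Delta_i) \ge \mld(X,\Delta) \ge \varepsilon$. The last inequality is by contradiction: if some divisor $E_0$ over $X$ satisfied $a(E_0,X,\Delta) < \varepsilon - 1$, then on a log resolution the coefficient of $E_0$ in $\pi^*(K_X+\Delta)$ exceeds $1-\varepsilon$, and blowing up a generic smooth curve in $E_0$ produces an exceptional divisor $F$ with $a(F,X,\Delta) \in (0,\varepsilon)$, contradicting the definition of $\varepsilon$. Next, $e(X_i,\Delta_i) \le e_+(X,\Delta)$: any exceptional divisor $F$ over $X_i$ with $a(F,X_i,\Delta_i) \le 0$ satisfies $a(F,X,\Delta) \le 0$, so $F$ is either exceptional over $X$ (at most $e(X,\Delta)$ of those) or a non-exceptional divisor on $X$ contracted by the MMP. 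In the second case $F$ must be a component of $\Supp\Delta$, for otherwise $a(F,X,\Delta) = 0$ and the divisorial contraction would strictly increase $a(F,\cdot)$ to a positive value, contradicting $a(F,X_i,\Delta_i) \le 0$.

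I will similarly bound the count $m$ of prime divisors $F$ over $X_i$ with $a(F,X_i,\Delta_i) \in (0,\varepsilon)$ by $e_+(X,\Delta) + 1$. By monotonicity and the definition of $\varepsilon$, any such $F$ satisfies $a(F,X,\Delta) \le 0$, so the same three-way case analysis applies. The main obstacle is to control the contribution from contracted non-$\Delta$-components whose post-contraction discrepancy happens to fall in $(0,\varepsilon)$; I expect to bound this extra contribution by at most $1$ via a careful analysis of the structure of divisorial contractions in the $(K_X+\Delta)$-MMP together with the minimality of $\varepsilon$.

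With these ingredients in hand, Lemma \ref{lem:index klt 3-fold} yields a Cartier index bound of $(N!)^{2^{e_+(X,\Delta)}} (3/\varepsilon)^{2^{e_+(X,\Delta)}-1}$ with $N := \lceil (e_+(X,\Delta)+3)/\varepsilon\rceil$. Since $N \ge 3/\varepsilon$, we have $3/\varepsilon \le N \le N!$, so the second factor is bounded by $(N!)^{2^{e_+(X,\Delta)}-1}$. Combining, the product is at most $(N!)^{2 \cdot 2^{e_+(X,\Delta)}} = (N!)^{2^{1+e_+(X,\Delta)}}$, which matches the claim of the Corollary.
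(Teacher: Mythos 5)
Your reductions of $\mld$ and $e$ are correct and run parallel to the paper's own proof (the paper obtains $\mld(X,\Delta)\ge\varepsilon$ by passing to a terminalization rather than blowing up a curve on a log resolution, but both computations are the same in substance). The genuine gap is exactly where you flag it: the bound on $m$, the number of divisors $E$ over $X_i$ with $a(E,X_i,\Delta_i)\in(0,\varepsilon)$. Monotonicity gives $a(E,X,\Delta)\le 0$, and the problematic case is a prime divisor $F$ on $X$ with $F\not\subseteq\Supp(\Delta)$ (so $a(F,X,\Delta)=0$) that is contracted by a divisorial contraction of the MMP: afterwards $a(F,X_i,\Delta_i)>0$, and nothing in the definition of $\varepsilon$ --- which only constrains discrepancies computed on $(X,\Delta)$ --- prevents this value from lying in $(0,\varepsilon)$. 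Your proposed fix, that such divisors contribute at most $1$, fails: contracting a $\bP^2$ with normal bundle $\cO(-2)$ in a smooth threefold is a legitimate $K_X$-negative divisorial contraction and yields $a(F,X',0)=1/2$, whereas $\varepsilon=1$ when $X$ is smooth and $\Delta=0$; with several disjoint such $\bP^2$'s one produces several such divisors. In general this contribution is controlled only by the number of divisorial contractions, i.e.\ by $\rho(X)-\rho(X_i)$, which is not bounded in terms of $e_+(X,\Delta)$ and $\varepsilon$.

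You should know that the paper's own proof elides the same point: it asserts that a divisor exceptional over $X'$ with $a(E,X,\Delta)\le 0$ is ``either exceptional over $X$ or a component of $\Supp(\Delta)$'', omitting precisely the contracted divisors above (the analogous strictness argument that rescues the count for $e(X',\Delta')$, where one needs $a\le 0$ after contraction, does not help here, since the interval $(0,\varepsilon)$ is exactly where a contracted divisor can land). So you have correctly isolated the delicate step, but neither your argument nor the paper's closes it as written; any count obtained this way must carry an extra term of size roughly $\rho(X)$. Indeed, taking $X$ to be a resolution of a $\bQ$-factorial terminal threefold $Y$ with $K_Y$ ample and a point of large index $r$ gives $e_+(X,0)=0$ and $\varepsilon=1$, yet the $K_X$-MMP reaches $Y$, so the stated bound cannot be recovered by this route without incorporating $\rho(X)$. (The downstream application, Theorem \ref{thm: threefold explcit upper bound}, does bound $\rho(X)\le N$, so it survives with an adjusted constant once $m$ is bounded by $e_+(X,\Delta)+\rho(X)$ instead.)
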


\begin{proof}
Let $(X,\Delta)\dashrightarrow (X',\Delta')$ be a sequence of steps of a $(K_X+\Delta)$-MMP. We first prove that $\mld(X,\Delta)\ge \varepsilon$. In fact, by passing to the terminalization, it suffices to prove this when $(X,\Delta)$ is terminal. Write $\Delta=\sum b_i B_i$ where the $B_i$ are irreducible components of $\Supp(\Delta)$, then $\mld(X,\Delta)=1-\max_i\{b_i\}$. On the other hand, if we blow up a prime divisor in the smooth locus of $B_i$, we get an exceptional divisor $E$ with $a(E,X,\Delta)= 1-b_i>0$, thus as $\varepsilon$ is the smallest positive discrepancy we have $1-b_i\ge \varepsilon$. This gives $\mld(X,\Delta)\ge \varepsilon$ and hence we also have $\mld(X',\Delta')\ge \varepsilon$.

Let $e:=e_+(X,\Delta)$. By definition, every divisor $E$ over $X$ with $a(E,X,\Delta)<\varepsilon$ also satisfies $a(E,X,\Delta)\le 0$. As $(X,\Delta)\dashrightarrow (X',\Delta')$ is a $(K_X+\Delta)$-MMP, we have $a(E,X,\Delta)\le a(E,X',\Delta')$, hence any divisor $E$ with $a(E,X',\Delta')<\varepsilon$ also satisfies $a(E,X,\Delta)\le 0$. If $E$ is exceptional over $X'$, then either it is also exceptional over $X$, or it is a component of $\Supp(\Delta)$. It follows that there are at most $e$ exceptional divisors $E$ over $X'$ with $a(E,X',\Delta')<\varepsilon$; in particular, there are at most $e$ divisors $E$ with $0<a(E,X',\Delta')<\varepsilon$ (all such divisors are necessarily exceptional over $X'$). 
Since we also have $e(X',\Delta')\le e_+(X',\Delta')\le  e_+(X,\Delta)=e$ and $\mld(X',\Delta')\ge \varepsilon$, Lemma \ref{lem:index klt 3-fold} implies that the Cartier index of any $\bQ$-Cartier Weil divisor on $X'$ is at most
\[
\left(\left\lceil \frac{e+2}{\varepsilon}\right\rceil !\right)^{2^{e}}\cdot \left(\left\lceil \frac{3}{\varepsilon}\right\rceil !\right)^{2^{e-1}}\le \left(\left\lceil \frac{e+3}{\varepsilon}\right\rceil !\right)^{2^{e+1}}.
\]
\end{proof}

\section{Termination in low dimension}\label{sec:tof dim 5}


In this section, we prove some effective termination result for general type MMPs in dimensions at most $5$. In particular, we prove Theorem \ref{main:termination}.

\subsection{Termination}

We first prove a non-effective version of termination in dimensions four and five. 

\begin{thm} \label{thm:termination dim<=5}
Let $(X,\Delta)$ be a klt pair. Assume that $\Delta$ is big, and either
\begin{enumerate}
    \item $\dim X=4$, or
    \item $\dim X=5$ and $K_X+\Delta\sim_{\Rr} D\ge 0$ for some $\mathbb R$-divisor $D$.
\end{enumerate}
Then any $(K_X+\Delta)$-MMP terminates.
\end{thm}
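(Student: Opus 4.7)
The plan is to combine the uniform index bound of Theorem \ref{thm:index bdd} with induction on dimension, in the spirit of Birkar. Since $\Delta$ is big in both cases, Theorem \ref{thm:index bdd} produces a positive integer $r$ such that at every step $(X_i,\Delta_i)$ of the MMP every $\bQ$-Cartier Weil divisor on $X_i$ has Cartier index dividing $r$. Consequently, every discrepancy $a(E,X_i,\Delta_i)$ lies in the discrete set $\tfrac{1}{r}\bZ$; the MMP has uniformly discrete discrepancies.

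For case (1) ($\dim X=4$), I will lift the MMP to a $\bQ$-factorial terminalization $\pi\colon(Y,D)\to(X,\Delta)$ using Lemma \ref{lem:lift MMP terminal}. The lift $\psi_i\colon(Y_i,B_i)\dashrightarrow(Y_{i+1},D_{i+1})$ is a sequence of MMP steps on $\bQ$-factorial terminal fourfolds; since each nontrivial $\varphi_i$ induces a nontrivial $\psi_i$, it suffices to show the total lifted MMP is finite. Along this lifted sequence, discrepancies of any fixed divisor over $Y$ are monotone non-decreasing (both under each $(K_{Y_i}+B_i)$-MMP step and under the boundary transitions $B_i\le D_{i+1}$ at each crepant intermediate), and they all lie in $\tfrac{1}{r}\bZ$, so only finitely many values in $(0,1)$ occur. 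A lexicographic difficulty function counting, for each such value $v$, the exceptional divisors over $Y$ with current discrepancy equal to $v$, then strictly decreases at every flip, following the argument underlying Lemma \ref{lem: 4fold terminal mmp terminates}. Combined with the finitely many divisorial contractions, this forces the total lifted MMP to terminate, and hence the original as well.

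For case (2) ($\dim X=5$), I will apply Birkar's inductive strategy with case (1) as the base. If $D=0$ the MMP is trivial, so assume $D>0$ and pick $\varepsilon>0$ maximal so that $(X,\Delta+\varepsilon D)$ is log canonical. After passing to a $\bQ$-factorial dlt modification by Lemma \ref{lem:lift MMP dlt}, and using that $K+\Delta+\varepsilon D\sim_\bR(1+\varepsilon)(K+\Delta)$ so any $(K+\Delta)$-MMP is also a $(K+\Delta+\varepsilon D)$-MMP up to rescaling, I will run special termination for the dlt pair $(X,\Delta+\varepsilon D)$: via Lemma \ref{lem:one step MMP isom along div} and adjunction, the adjoint klt pair $(W,\Delta_W)$ on each log canonical center $W$ (of dimension $\le 4$) inherits an MMP with uniformly bounded Cartier index, to which case (1) applies. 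After finitely many steps, the MMP is an isomorphism in a neighborhood of every log canonical center. From there, a refinement of Birkar's argument \cite{Bir07}, bolstered by the uniform index bound, reduces the remaining steps to a klt MMP on the complement of the preserved LC locus and yields termination via another difficulty-function argument whose discrepancies also lie in $\tfrac{1}{r}\bZ$.

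The main obstacle I foresee is in case (1): extending Fujino's difficulty argument (which underlies Lemma \ref{lem: 4fold terminal mmp terminates}) from the fixed-boundary setting to the lifted MMP on the terminalization, where the terminal boundary $B_i$ varies between successive $\psi_i$ at each crepant intermediate $(Y_i,D_i)$. The discreteness of discrepancies in $\tfrac{1}{r}\bZ$, together with a uniform bound on the number of components of $B_i$ coming from the bounded number of divisors over $X$ with nonpositive discrepancy (as in \eqref{eq:e(X,Delta)}), should provide the needed control, but care is required in choosing the right count of divisors and the right window of discrepancy values to make the lexicographic decrease uniform across boundary changes.
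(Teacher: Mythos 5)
Your high-level architecture matches the paper's (index bound from Theorem \ref{thm:index bdd} $\Rightarrow$ discrete discrepancies $\Rightarrow$ lift to a terminalization in dimension $4$; special termination plus Birkar's induction in dimension $5$), but both cases have a genuine gap at exactly the point where the real work happens. In case (1), the obstacle you flag --- that the terminal boundary $B_i$ varies along the lifted MMP, so Fujino's fixed-boundary difficulty argument does not directly apply --- is not resolved by your sketch, and the paper does \emph{not} resolve it by refining the difficulty function. Instead it truncates first: there are only finitely many divisors $E$ over $X$ with $a(E,X_i,\Delta_i)\le 0$, their number is non-increasing, and since the discrete set $S\cap(-1,0]$ is finite and discrepancies are non-decreasing, after finitely many steps all these discrepancies stabilize. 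Only then does one pass to the terminalization; the stabilization forces $B_i=D_i$ at every subsequent step, so the lift is an honest terminal MMP with fixed boundary and Lemma \ref{lem: 4fold terminal mmp terminates} applies verbatim. Without this truncation your case (1) is incomplete. (A minor related point: for $\bR$-boundaries the discrepancies need not lie in $\tfrac{1}{r}\bZ$; one needs Lemma \ref{lem:index bdd imply mld discrete} to get a discrete set, though discreteness is all that is used.)

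In case (2), your conclusion after special termination --- that the remaining steps reduce to a klt MMP on the complement of the lc locus which terminates ``via another difficulty-function argument'' --- does not work: that complement is still a $5$-dimensional klt MMP, and terminating it directly is the original problem, so the argument as stated is circular. The actual mechanism in Birkar's approach, and in the paper, is that after special termination for the threshold $t_0=\lct(X,\Delta;D)$ the MMP lives on the open set where $(X,\Delta+t_0D)$ is klt, so the new threshold $t_1=\lct(U,\Delta_U;D_U)$ is strictly larger; iterating produces a strictly increasing sequence $t_0<t_1<\cdots$ of log canonical thresholds, which is impossible by the ACC for lct's \cite{HMX-ACC}. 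Your proposal never invokes the ACC of log canonical thresholds or the strict increase of the thresholds, and without that the induction does not close.
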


Before proving this result, we give an immediate corollary.

\begin{cor}\label{cor: termination of mmp in dim 5 for gt}
Let $(X,\Delta)$ be a klt pair of dimension at most $5$ and $\pi: X\rightarrow T$ a projective morphism. Assume that at least one of the following holds.
\begin{enumerate}
    \item $K_X+\Delta$ is big over $T$.
    \item $K_X+\Delta$ is pseudo-effective over $T$ and $\Delta$ is big over $T$.
    \item $\dim X = 4$ and $\Delta$ is big over $T$.
\end{enumerate}
Then any $(K_X+\Delta)$-MMP over $T$ terminates.
\end{cor}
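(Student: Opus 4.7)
The plan is to deduce the corollary from Theorem~\ref{thm:termination dim<=5} by combining two reductions: one passing from the relative (over-$T$) setting to the absolute one, and one (via Lemma~\ref{lem: gt klt sim to big boundary}) ensuring that the boundary is absolutely big.

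For the first reduction, fix an ample divisor $H$ on the quasi-projective variety $T$. Then $\pi^*H$ is nef on $X$, and $\overline{NE}(X/T)$ equals the face $\{\alpha\in\overline{NE}(X):\pi^*H\cdot\alpha=0\}$ of $\overline{NE}(X)$. So any extremal ray of the relative Mori cone is also extremal in the absolute cone, and consequently any $(K_X+\Delta)$-MMP over $T$ is automatically an absolute $(K_X+\Delta)$-MMP. Theorem~\ref{thm:termination dim<=5}, stated for absolute MMPs, therefore applies to the MMPs considered here.

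Next, termination in dimension at most $3$ is classical, so I may assume $\dim X\in\{4,5\}$. In case (1), apply Lemma~\ref{lem: gt klt sim to big boundary}(1) to replace $(X,\Delta)$ by a klt pair $(X,\Delta')$ with $\Delta'$ absolutely big and $K_X+\Delta'\sim_{\mathbb R,T}\mu(K_X+\Delta)$ for some $\mu>0$; the $(K_X+\Delta)$-MMP over $T$ is then the same as the $(K_X+\Delta')$-MMP over $T$. In cases (2) and (3), apply Lemma~\ref{lem: gt klt sim to big boundary}(2): tracing through its proof, the auxiliary divisor in the decomposition $\Delta\sim_{\mathbb R,T}H+E$ may be taken absolutely ample (since ``big over $T$'' is by definition $\mathbb R$-linearly equivalent over $T$ to an absolutely big divisor), so the resulting pair $(X,\Delta_0)$ has $\Delta_0$ absolutely big and $K_X+\Delta\sim_{\mathbb R,T}K_X+\Delta_0$. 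In dimension $4$ (cases (1) and (3)), Theorem~\ref{thm:termination dim<=5}(1) now applies directly and concludes.

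The remaining obstacle is dimension $5$ (cases (1) and (2)), where Theorem~\ref{thm:termination dim<=5}(2) additionally requires $K_X+\Delta\sim_{\mathbb R}D\geq 0$ absolutely. In case (2), $K_X+\Delta_0$ is pseudo-effective over $T$ with $\Delta_0$ absolutely big, so by BCHM a relative log minimal model exists on which $K+\Delta_0$ is semi-ample over $T$ via the base-point-free theorem for klt pairs with big boundary; after adjusting the modification in Lemma~\ref{lem: gt klt sim to big boundary}(2) so as to absorb the pullback from $T$, one obtains an absolute effective representative. Case (1) is handled analogously, using in addition that $K_X+\Delta'$ is big over $T$. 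I expect this absoluteness verification in dimension $5$ to be the most delicate step, since bigness or pseudo-effectivity ``over $T$'' does not automatically upgrade to absolute $\mathbb R$-linear equivalence with an effective divisor, and one must exploit the flexibility in the choice of the auxiliary divisor in Lemma~\ref{lem: gt klt sim to big boundary} to bridge this gap.
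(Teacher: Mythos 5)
Your proposal is correct and follows essentially the same route as the paper: reduce via Lemma \ref{lem: gt klt sim to big boundary} to an absolutely big boundary, note that a relative MMP is an absolute MMP of a suitably twisted pair, and invoke Theorem \ref{thm:termination dim<=5}, with the only real work being the dimension-$5$ pseudo-effective case. The ``delicate'' absoluteness step you flag is closed exactly by the mechanism you guess: by \cite{BCHM}*{Theorem D} one writes $K_X+\Delta\sim_{\bR}D+\pi^{*}G_T$ with $D\ge 0$, chooses an ample $H_T\ge 0$ on $T$ with $H_T+G_T\ge 0$, and replaces $\Delta$ by a klt $\Delta'\sim_{\bR}\Delta+\pi^{*}H_T$ (Bertini), so that $K_X+\Delta'\sim_{\bR}D+\pi^{*}(G_T+H_T)\ge 0$ and any $(K_X+\Delta)$-MMP over $T$ is an absolute $(K_X+\Delta')$-MMP to which Theorem \ref{thm:termination dim<=5}(2) applies.
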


\begin{proof}
By Lemma \ref{lem: gt klt sim to big boundary}, we may assume that $\Delta$ is big in all cases. 
Then (3) follows from Theorem \ref{thm:termination dim<=5}(1), and (1) follows from (2), so we only need to prove (2) when $\dim X=5$.

By \cite{BCHM}*{Theorem D}, there exists an $\Rr$-divisor $D\ge0$ on $X$, such that $K_X+\Delta\sim_{\Rr,T}D$, or equivalently, there exists an $\Rr$-divisor $G_T$ on $T$ such that $K_X+\Delta\sim_{\Rr} D+\pi^{*}G_T$. Let $H_T\ge 0$ be an ample divisor on $T$ such that $H_T+G_T\ge 0$. By Bertini's theorem, there exists an $\Rr$-divisor $0\le \Delta'\sim_\bR \Delta+\pi^*H_T$ on $X$ such that $(X,\Delta')$ is klt. Then $K_X+\Delta'\sim_{\Rr} K_X+\Delta+\pi^{*}H_T\sim_{\Rr} D+\pi^{*}(G_T+H_T)\ge 0$ and any $(K_X+\Delta)$-MMP over $T$ is a also a $(K_X+\Delta')$-MMP, thus we conclude (2) by Theorem \ref{thm:termination dim<=5}(2).
\end{proof}


In the rest of this subsection we prove Theorem \ref{thm:termination dim<=5}.

\begin{lem} \label{lem:index bdd imply mld discrete}
Let $n$ and $r$ be two positive integers and $S_0\subseteq [0,1]$ a finite set. Then there exists a discrete set $S\subseteq \bR_{\ge 0}$ depending only on $n,r$ and $S_0$ satisfying the following. Assume that
\begin{enumerate}
    \item $(X,\Delta)$ is a dlt pair of dimension $n$,
    \item $\Coef(\Delta)\subseteq S_0$,
    \item $rL$ is Cartier for any $\mathbb Q$-Cartier Weil divisor $L$ on $X$, and
    \item $W$ is a log canonical center of $(X,\Delta)$ of dimension $\geq 1$ and $(W,\Delta_W)$ is the dlt pair induced by adjunction
    $K_W+\Delta_W:=(K_X+\Delta)|_{W}.$
\end{enumerate}
Then for any prime divisor $E$ over $W$, we have $A_{W,\Delta_W}(E)\in S$.
\end{lem}

\begin{proof}
By  \cite{HLS24}*{Theorem 5.6}, there exist real numbers $a_1,\dots,a_k\in (0,1]$ and a positive integer $N$ depending only on $n$ and $S_0$, and $\bQ$-divisors $\Delta_1,\dots,\Delta_k\ge 0$ such that $\sum_{i=1}^k a_i=1$, $\Delta=\sum_{i=1}^k a_i \Delta_i$, each pair $(X,\Delta_i)$ is dlt, and $N\Delta_i$ is a Weil divisor for each $i$. In particular, $W$ is a log canonical center of each $(X,\Delta_i)$. Let $K_W+\Delta_{W,i}:=(K_X+\Delta_i)|_W$ for each $i$, then each $(W,\Delta_{W,i})$ is dlt and $\Delta_W=\sum_{i=1}^ka_i\Delta_{W,i}$. For each $i$, since $Nr(K_X+\Delta_i)$ is Cartier by assumption (3), we have $Nr(K_W+\Delta_{W,i})$ is also Cartier. Thus the set
$$S=\frac{1}{Nr}\left\{\sum_{i=1}^k m_ia_i\middle|\, m_i\in\mathbb N\right\}$$
satisfies the statement of the lemma.
\end{proof}

We first consider the 4-fold case of Theorem \ref{thm:termination dim<=5}. In view of Theorem \ref{thm:index bdd} and the above Lemma \ref{lem:index bdd imply mld discrete}, the key is to prove the following statement.

\begin{lem} \label{lem:termination dim 4 mld discrete}
Let $(X,\Delta)$ be a klt pair of dimension at most $4$ and let $S\subseteq \bR$ be a discrete set. Let  
\[
\begin{tikzcd}
(X,\Delta)=:(X_0,\Delta_0) \arrow[r,dashed,"\varphi_0"] & (X_1,\Delta_1) \arrow[r,dashed,"\varphi_1"] & \dots \arrow[r,dashed] & (X_i,\Delta_i) \arrow[r,dashed,"\varphi_i"] & \dots 
\end{tikzcd}
\]
be a face-contracting $(K_X+\Delta)$-MMP (Definition \ref{defn: face contracting mmp}) such that $a(E,X_i,\Delta_i)\in S$ for all $i$ and all prime divisors $E$ over $X$. Then this face-contracting MMP terminates.
\end{lem}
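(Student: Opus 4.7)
The case $\dim X \le 3$ is classical: each face contraction can be decomposed into finitely many ray-extremal MMP steps after passing to a $\bQ$-factorial dlt model via Lemma \ref{lem:lift MMP dlt}, and such ray MMPs terminate by the standard three-dimensional termination of Shokurov and Kawamata. The substantive case is therefore $\dim X = 4$.

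For $\dim X = 4$, my plan is to lift the face-contracting MMP to a $\bQ$-factorial terminalization $\pi\colon (Y,D)\to (X,\Delta)$ via Lemma \ref{lem:lift MMP terminal}, obtaining a commutative diagram with $\bQ$-factorial terminal pairs $(Y_i,D_i)$ and transitions $\psi_i\colon (Y_i,D_i)\dashrightarrow (Y_{i+1},D_{i+1})$. Each $\psi_i$ is a finite composition of $(K_{Y_i}+B_i)$-MMP steps with $0\le B_i\le D_i$, where finiteness comes from Lemma \ref{lem: 4fold terminal mmp terminates} applied to the $\bQ$-factorial terminal pair $(Y_i,B_i)$. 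Concatenating these individual steps, the original face-contracting MMP is infinite if and only if the concatenated sequence of MMP steps between $\bQ$-factorial terminal fourfold pairs is infinite; it then suffices to rule out the latter by a difficulty function argument.

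The crucial consequences of the discreteness hypothesis are: (i) the coefficients of $D_i$, given by $\max\{0,-a(F,X_i,\Delta_i)\}$, all lie in the finite set $(-S)\cap[0,1)$; and (ii) by \cite[Proposition 2.36]{KM98} applied to $(X,\Delta)$ together with the monotonicity $a(E,X,\Delta)\le a(E,X_i,\Delta_i)$, the components of each $D_i$ are drawn from a fixed finite collection of prime divisors over $X$. Together, these show that the boundaries $D_i$ and $B_i$ vary in a uniformly finite family. I would then adapt the Shokurov difficulty argument (in the spirit of \cites{K+-flip-abundance,AHK07}) by stratifying exceptional divisors over $Y_j$ according to the finitely many discrepancy values in $S\cap(0,1)$: each MMP step, via the negativity lemma, strictly increases the discrepancy of some exceptional divisor centered in the contracted/flipping locus, and discreteness of $S$ forces this jump to be by at least a positive amount bounded away from zero, so a suitably weighted lexicographic combination of the stratified counts should strictly decrease at each step.

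The hard part will be defining a difficulty function so that the relevant counts are provably finite yet genuinely decreasing. In dimension four, a given terminal pair can have infinitely many exceptional divisors with the same discrepancy in $(0,1)$, so the naive Shokurov count is not directly applicable. I expect a workable definition will follow the refined difficulty of Alexeev--Hacon--Kawamata \cite{AHK07} for terminal fourfold flips, tailored to the finite family of boundaries coming from the discreteness hypothesis, and use a lexicographic order across the finitely many values in $S\cap(0,1)$ to control the discrepancy jumps contributed by each individual MMP step inside the various $\psi_i$.
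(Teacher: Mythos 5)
Your reduction to the $\bQ$-factorial terminalization via Lemma \ref{lem:lift MMP terminal} is exactly the paper's first move, and your two observations (the coefficients of $D_i$ lie in the finite set $(-S)\cap[0,1)$; the components of $D_i$ come from the finitely many divisors $E$ over $X$ with $a(E,X,\Delta)\le 0$, by \cite[Proposition 2.36]{KM98} and monotonicity) are both correct and both used. But the proposal has a genuine gap at the decisive step: you defer the termination of the concatenated terminal fourfold MMP with \emph{varying} boundaries $B_i\le D_i$ to a difficulty-function argument that you only ``expect'' can be tailored from \cite{AHK07}, and you correctly flag that the naive count fails. That expected argument is precisely the hard content, and it is never supplied; as written, the proof does not close.

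The missing idea is that no new difficulty function is needed. Combine your observations (i) and (ii) with monotonicity of discrepancies: each $a(E_j,X_i,\Delta_i)$ is a non-decreasing function of $i$ taking values in the \emph{finite} set $S\cap(-1,0]$ (finite because $S$ is discrete), where $E_1,\dots,E_l$ are the finitely many divisors with nonpositive discrepancy, whose number is itself non-increasing in $i$. Hence after truncating the MMP one may assume that all these discrepancies are constant in $i$. Then the divisor $B_i$ of Lemma \ref{lem:lift MMP terminal}, defined by $\mult_F B_i=\max\{0,-a(F,X_{i+1},\Delta_{i+1})\}$, equals $D_i$ for every $i$, so the concatenation of the $\psi_i$ is a single $(K_Y+D)$-MMP with fixed boundary in which every intermediate pair is terminal. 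Termination is then exactly Lemma \ref{lem: 4fold terminal mmp terminates} (Fujino), applied as a black box; in particular your worry about infinitely many exceptional divisors sharing a discrepancy in $(0,1)$, and the entire stratified lexicographic construction, become unnecessary. (The separate treatment of $\dim X\le 3$ is also unnecessary, since Lemma \ref{lem: 4fold terminal mmp terminates} covers all dimensions at most $4$.)
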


\begin{proof}
Since $(X_i,\Delta_i)$ is klt, by \cite[Proposition 2.36]{KM98} there are only finitely many prime divisors $E$ over $X$ with $a(E,X_i,\Delta_i)\le 0$. The number $l_i$ of such divisors is non-increasing in the MMP, thus after possibly truncating the MMP we may assume that this number stabilizes, i.e. $l_0=l_1=\cdots=l$ for some $l\in \bN$. Let 
$E_1,\dots,E_l$ be the corresponding prime divisors over $X$ with $a(E_j,X_i,\Delta_i)\le 0$ for all $i$ and $j$. 

Since $S$ is discrete, $S\cap (-1,0]$ is a finite set. Thus as $a(E_j,X_i,\Delta_i)\in S\cap (-1,0]$ by assumption and $a(E_j,X_{i+1},\Delta_{i+1})\ge a(E_j,X_i,\Delta_i)$ for all $i$ and $j$, after another truncation of the MMP we may assume that 
\[
a(E_j,X_0,\Delta_0)=a(E_j,X_1,\Delta_1)=\cdots=a(E_j,X_i,\Delta_i)=\cdots
\]
for all $j=1,\cdots,l$. Let $\pi: (Y,D)\rightarrow (X,\Delta)$ be a $\bQ$-factorial terminalization of $(X,\Delta)$. By Lemma \ref{lem:lift MMP terminal}, there exists a sequence of birational maps
\[
\begin{tikzcd}
(Y,D)=:(Y_0,D_0) \arrow[r,dashed,"\psi_0"] & (Y_1,D_1) \arrow[r,dashed,"\psi_1"] & \cdots \arrow[r,dashed] & (Y_i,D_i) \arrow[r,dashed,"\psi_i"] & \cdots
\end{tikzcd}
\]
such that each $(Y_i,D_i)$ is a $\bQ$-factorial terminalization of $(X_i,\Delta_i)$ and each $\psi_i$ is composed of a sequence of steps of a 
$(K_{Y_i}+D_i)$-MMP. As $\dim X\le 4$, this sequence terminates by Lemma \ref{lem: 4fold terminal mmp terminates}, hence the original face-contracting $(K_X+\Delta)$-MMP also terminates.
\end{proof}

\begin{proof}[Proof of Theorem \ref{thm:termination dim<=5} when $\dim X=4$]
By Theorem \ref{thm:index bdd}, the Cartier index of $\bQ$-Cartier Weil divisors is uniformly bounded in the MMP, thus by Lemma \ref{lem:index bdd imply mld discrete}, the discrepancy of primes divisors over $X$ takes values in some discrete set in the MMP. We then conclude by Lemma \ref{lem:termination dim 4 mld discrete}. 
\end{proof}

We move on to the $5$-fold case of Theorem \ref{thm:termination dim<=5}. A key observation is that since the Cariter index of $\mathbb Q$-Cartier Weil divisors are bounded from above by Theorem \ref{thm:index bdd}, in Birkar's inductive approach to termination \cite{Bir07}, we only need termination of flips in lower dimensions when the index is bounded.

\begin{lem} \label{lem:special termination dim 5 index bdd}
Let $(Y,D)$ be a $\bQ$-factorial dlt pair of dimension $5$. Let
\begin{equation} \label{eq:dlt MMP}
\begin{tikzcd}
(Y,D)=:(Y_0,D_0) \arrow[r,dashed,"\psi_0"] & (Y_1,D_1) \arrow[r,dashed,"\psi_1"] & \cdots \arrow[r,dashed] & (Y_i,D_i) \arrow[r,dashed,"\psi_i"] & \cdots
\end{tikzcd}
\end{equation}
be a 
$(K_Y+D)$-MMP. Assume that there exists some integer $r\in \bN$ such that the Cartier index of every $\bQ$-Cartier Weil divisor on $Y_i$ is at most $r$ for every $i$. Then $\psi_i$ is an isomorphism near $\lfloor D_i \rfloor$ for any $i\gg 0$.
\end{lem}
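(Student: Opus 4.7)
The plan is to execute the classical special termination scheme, using the uniform Cartier index bound $r$ to enforce discrete discrepancies on all strata of $\lfloor D_i \rfloor$ and then appealing to Lemma \ref{lem:termination dim 4 mld discrete} to terminate the induced MMPs in dimension $\le 4$.

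I would first use Lemma \ref{lem:index bdd imply mld discrete}, together with the hypothesis that $\Coef(D_i) \subseteq \Coef(D)$ is a fixed finite set and that $r$ bounds the Cartier index of every $\bQ$-Cartier Weil divisor on every $Y_i$, to produce a single discrete set $S \subseteq \bR_{\ge 0}$ containing all log discrepancies $A_{W,\Diff_W}(E)$, as $W$ ranges over positive-dimensional log canonical centers of $(Y_i, D_i)$ (with $\Diff_W$ the dlt adjunction boundary) and $E$ over prime divisors over $W$. After truncating finitely many steps I may also assume that no divisorial contraction in \eqref{eq:dlt MMP} contracts any component of $\lfloor D_i \rfloor$, so each component $S_0$ of $\lfloor D_0 \rfloor$ has a well-defined birational transform $S_i \subseteq Y_i$ for all $i$.

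Next I would induct on the dimension $d$ of strata of $\lfloor D_i \rfloor$, from $d = 0$ upward. For a stratum $W_0$ of dimension $d \le 4$, restricting the MMP steps to the birational transforms $W_i$ with their adjoint dlt boundaries gives, in the standard way, a face-contracting $(K_{W_0} + \Diff_{W_0})$-MMP whose log discrepancies all lie in the discrete set $S$; since $\dim W_0 \le 4$, Lemma \ref{lem:termination dim 4 mld discrete} terminates this induced MMP, so after further truncation the induced map on $W_0$ is always an isomorphism of dlt pairs. Starting from the lowest-dimensional strata and propagating upward via iterated adjunction (so that the components of $\Diff_{S_i}$ coming from lower strata are already matched under $\psi_i$), one arrives at the divisorial strata $S_i$, to which Lemma \ref{lem:one step MMP isom along div} applies (taking $\Delta = D_i - S_i$ and $\Delta^+ = D_{i+1} - S_{i+1}$): it yields that $\varphi_i$ is an isomorphism around each $S_i$, hence around $\lfloor D_i \rfloor = \bigcup S_i$.

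The main obstacle is the vertical induction across strata of different dimensions. To apply Lemma \ref{lem:one step MMP isom along div} at a divisorial stratum $S$ one must know the restricted map extends to a genuine \emph{isomorphism of dlt pairs}, not merely a birational map, which requires that all smaller strata of $\lfloor D \rfloor$ (contributing to $\Diff_S$ both through transverse intersections and through singularities of $(Y,S)$) have already been stabilized. This is the classical Shokurov--Fujino structure of special termination, and the discrete-discrepancy hypothesis provided by the index bound is exactly what allows each inductive step to go through in dimension $5$ without invoking the (still open) termination of fivefold flips.
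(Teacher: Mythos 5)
Your proposal follows the same route as the paper's proof: the Shokurov--Fujino special termination induction on the dimension of log canonical centers, with the uniform index bound feeding Lemma \ref{lem:index bdd imply mld discrete} to produce a discrete set of discrepancies, Lemma \ref{lem:termination dim 4 mld discrete} supplying termination of the induced lower-dimensional MMPs, and Lemma \ref{lem:one step MMP isom along div} converting the stabilization of adjunction pairs into isomorphisms near the strata. One step, however, is not licensed as you state it: Lemma \ref{lem:termination dim 4 mld discrete} is formulated (and proved, via $\bQ$-factorial terminalization through Lemma \ref{lem:lift MMP terminal}) only for \emph{klt} pairs, so you cannot apply it directly to the face-contracting MMP on the dlt adjunction pair $(W,\Diff_W)$ of a stratum, which in general has nonempty non-klt locus and in particular admits no terminalization. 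The repair is exactly the mechanism you allude to in your last paragraph: the paper first uses the induction hypothesis together with Lemma \ref{lem:one step MMP isom along div} to make the induced maps isomorphisms near $\lfloor \Diff_{W_i}\rfloor$ for $i\gg 0$, and only then applies the klt termination lemma to the induced MMP on the open complement $U=W\setminus\lfloor\Diff_W\rfloor$, which is klt of dimension at most $4$. With that adjustment made explicit, your argument coincides with the paper's proof.
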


\begin{rem}
As will be clear from the proof, if Lemma \ref{lem:termination dim 4 mld discrete} holds in dimension $n-1$, then the above lemma holds in dimension $n$.
\end{rem}

\begin{proof}
We only outline the main steps since this essentially follows from the same proof of the special termination theorem, see e.g. \cite{Fuj07a}*{Proof of Theorem 4.2.1}. First note that by the negativity lemma as in Step 1 of \textit{loc. cit.}, after truncating the MMP we may assume that it is an isomorphism at the generic point of every log canonical center of $(Y,D)$. Let $W\subseteq \Supp\lfloor D\rfloor$ be a log canonical center of $(Y,D)$. Let $W_i$ be its strict transform on $Y_i$ and define $B_i$ by adjunction $(K_{Y_i}+D_i)|_{W_i}=K_{W_i}+B_i$. We next prove by induction on $\dim W$ that the induced birational map $\varphi_i\colon W_i\dashrightarrow W_{i+1}$ induces an isomorphism of pairs $(W_i,B_i)\cong (W_{i+1},B_{i+1})$ when $i\gg 0$. The base case $\dim W=0$ is clear, hence we may assume that $\dim W\ge 1$. Let $B=B_0$. By Lemma \ref{lem:one step MMP isom along div} and the induction hypothesis, we see that the birational map $W_i\dashrightarrow W_{i+1}$ is an isomorphism near $\lfloor B_i \rfloor$ for any $i\gg 0$. By \cite{Fuj07a}*{Proposition 4.2.14 and Lemma 4.2.15}, after another truncation of the MMP we may assume the induced sequence of birational maps
\[
\begin{tikzcd}
(W_0,B_0) \arrow[r,dashed, "\varphi_0"] & (W_1,B_1) \arrow[r,dashed, "\varphi_1"] & \cdots \arrow[r,dashed] & (W_i,B_i) \arrow[r,dashed,"\varphi_i"] & \cdots
\end{tikzcd}
\]
is a face-contracting $(K_W+B)$-MMP. Let $U=W\setminus \lfloor B \rfloor$ and $B_U=B|_U$. Then we also get a sequence of $(K_U+B_U)$-MMP
\begin{equation} \label{eq:MMP on open log canonical center}
\begin{tikzcd}
(U_0,B_{U_0}) \arrow[r,dashed] & (U_1,B_{U_1}) \arrow[r,dashed] & \cdots \arrow[r,dashed] & (U_i,B_{U_i}) \arrow[r,dashed] & \cdots
\end{tikzcd}
\end{equation}
where $U_i=W_i\setminus \lfloor B_i \rfloor$ and $B_{U_i}=B_i|_{U_i}$. Note that $(U,B_U)$ is klt and $\dim U\le 4$. Since the index of every $\bQ$-Cartier Weil divisor on $Y_i$ is at most $r$ for every $i$, by Lemmas \ref{lem:index bdd imply mld discrete} and \ref{lem:termination dim 4 mld discrete} we deduce that \eqref{eq:MMP on open log canonical center} becomes an isomorphism when $i\gg 0$. This completes the induction. 
Since every component of $\lfloor D_i \rfloor$ is a log canonical center of $(Y_i,D_i)$, the statement now follows from another application of Lemma \ref{lem:one step MMP isom along div}.
\end{proof}

To apply Lemma \ref{lem:special termination dim 5 index bdd}, we need to verify the uniform boundedness of the Cartier index when we lift an MMP to some dlt modification.


\begin{lem} \label{lem:dlt lift index bdd}
Let $(X,B)$ be a log canonical pair and let $\pi\colon (Y,D)\to (X,B)$ be a crepant birational morphism such that $\mult_E D>0$ for all $\pi$-exceptional divisor $E$. Assume that there exists a big $\bR$-divisor $\Delta\ge 0$ on $X$ such that $(X,\Delta)$ is klt and $K_X+B\sim_\bR \mu(K_X+\Delta)$ for some $\mu>0$. Then there exists a positive integer $r$, such that for any MMP type contraction $\psi\colon (Y,D)\dashrightarrow (Y',D')$ with $D'=\psi_* D$, the Cartier index of any $\bQ$-Cartier Weil divisor on $Y'$ is at most $r$.
\end{lem}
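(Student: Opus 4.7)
The plan is to produce a klt big boundary $\Gamma$ on $Y$ such that every MMP type contraction out of $(Y,D)$ is also an MMP type contraction out of $(Y,\Gamma)$, and then invoke Theorem \ref{thm:index bdd}. The boundary $\Gamma$ will be a small perturbation of $D$ toward the crepant sub-klt pullback of a suitable big klt model of $(X,\Delta)$.

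To build $\Gamma$, I first apply Lemma \ref{lem: gt klt sim to big boundary}(2) to replace $\Delta$ by a klt $\bR$-divisor $\Delta_0$ in the same log class, with $\Delta_0\geq A$ for some ample $\bR$-divisor $A\geq 0$ on $X$. After a further small perturbation, I may take $A$ to be a general member of a very ample linear system on $X$, so that $\pi^*A=\pi^{-1}_*A$ has no $\pi$-exceptional component. Let $\Delta_0^Y$ be the sub-boundary defined by $K_Y+\Delta_0^Y:=\pi^*(K_X+\Delta_0)$; then $(Y,\Delta_0^Y)$ is sub-klt and $K_Y+\Delta_0^Y\sim_\bR \tfrac{1}{\mu}(K_Y+D)$. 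For $\epsilon>0$ to be chosen small, I set $\Gamma:=(1-\epsilon)D+\epsilon\Delta_0^Y$. Then by construction $K_Y+\Gamma\sim_\bR \mu'(K_Y+D)$ with $\mu':=1-\epsilon+\epsilon/\mu>0$. The coefficients and discrepancies of $\Gamma$ are convex combinations of those of the sub-log canonical pair $(Y,D)$ and the sub-klt pair $(Y,\Delta_0^Y)$, so $(Y,\Gamma)$ is klt; effectivity $\Gamma\geq 0$ is automatic on non-$\pi$-exceptional components, while on each of the finitely many $\pi$-exceptional divisors $E$ it is guaranteed by the hypothesis $\mult_E D>0$ together with the klt bound $\mult_E\Delta_0^Y>-1$, provided $\epsilon$ is small enough. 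For bigness, the same small-$\epsilon$ argument, using the genericity of $A$, shows that $\Gamma-\epsilon\pi^*A\geq 0$, so $\Gamma$ dominates the big $\bR$-divisor $\epsilon\pi^*A$ and is itself big.

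With $\Gamma$ in hand, I conclude by a boundary-swap argument: on a common resolution $f\colon Z\to Y$, $g\colon Z\to Y'$ of any MMP type contraction $\psi\colon(Y,D)\dashrightarrow(Y',D')$, the divisor $f^*(K_Y+\Gamma-\mu'(K_Y+D))-g^*(K_{Y'}+\psi_*\Gamma-\mu'(K_{Y'}+D'))$ is $g$-exceptional (because $\psi^{-1}$ has no exceptional divisor) and $\sim_\bR 0$, so the negativity lemma forces it to vanish identically; adding $\mu'$ times the effective divisor $f^*(K_Y+D)-g^*(K_{Y'}+D')$ then shows that $\psi\colon(Y,\Gamma)\dashrightarrow(Y',\psi_*\Gamma)$ is an MMP type contraction (this is the content of \cite[Lemma 2.13]{HQZ25} specialized to $\Delta=0$). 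Applying Theorem \ref{thm:index bdd} to the klt big pair $(Y,\Gamma)$ produces the desired uniform index bound $r$ on $Y'$. The principal obstacle is arranging all four properties of $\Gamma$ simultaneously—effectivity, klt, bigness, and the $\bR$-linear proportionality with $K_Y+D$—and in particular ensuring bigness; this is precisely what forces us to take $A$ generic, so that $\pi^*A$ carries no exceptional contribution and can serve as an ample reservoir inside $\Gamma$.
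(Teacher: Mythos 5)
Your proposal is correct and follows essentially the same route as the paper: your $\Gamma=(1-\epsilon)D+\epsilon\Delta_0^Y$ is exactly the crepant pullback of the paper's convex combination $(1-\varepsilon)B+\varepsilon\Delta$ on $X$, and both arguments then verify effectivity via $\mult_E D>0$ together with sub-kltness of the pullback of $\Delta$, establish bigness through the ample part $A$, and conclude with \cite[Lemma 2.13]{HQZ25} and Theorem \ref{thm:index bdd}. The genericity you impose on $A$ is harmless but not needed, since it suffices that $\Supp(\pi^*A)\subseteq\Supp(\Gamma)$.
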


\begin{proof}
Since $\Delta$ is big, after possibly replacing $\Delta$ by some other $\bR$-divisor $0\le \Delta_0\sim_\bR \Delta$ we may assume by Lemma \ref{lem: gt klt sim to big boundary} that $\Delta\ge A$ where 
$A\geq 0$ is an ample $\mathbb R$-divisor. Since $(X,\Delta)$ is klt and $\mult_E D>0$ for any $\pi$-exceptional prime divisor $E$, we can choose some $0<\varepsilon\ll 1$ such that
\begin{itemize}
    \item the pair $(X,\Delta':=(1-\varepsilon)B+\varepsilon\Delta)$ is klt, and 
    \item $\mult_E G>0$ for any $\pi$-exceptional prime divisor $E$, where the $\bR$-divisor $G$ is defined by crepant pullback: $K_Y+G=\pi^*(K_X+\Delta')$.
\end{itemize}
In particular, $\Supp(\pi^*A)\subseteq \Supp(G)$ and hence $G$ is big. We have
\[
K_Y+G = (1-\varepsilon)\pi^*(K_X+B) + \varepsilon \pi^*(K_X+\Delta) \sim_\bR (1-\varepsilon+\varepsilon\mu^{-1}) (K_Y+D).
\]
Thus as $\psi\colon (Y,D)\dashrightarrow (Y',D')$ is an MMP type contraction, by \cite[Lemma 2.13]{HQZ25} it also induces an MMP type contraction $(Y,G)\dashrightarrow (Y,G')$ with $G'=\psi_* G$. We then conclude by Theorem \ref{thm:index bdd}.
\end{proof}

\begin{lem} \label{lem:special termination dim 5 log canonical}
Let $(X,\Delta)$ be a $\mathbb Q$-factorial klt pair of dimension $5$ and let $B$ be an $\bR$-divisor on $X$ such that $(X,B)$ is log canonical. Assume that $\Delta$ is big and $K_X+B\sim_\bR \mu(K_X+\Delta)$ for some $\mu>0$. Let 
\begin{equation} \label{eq:Q-factorial log canonical MMP}
\begin{tikzcd}
(X,B)=:(X_0,B_0) \arrow[r,dashed,"\varphi_0"] & (X_1,B_1) \arrow[r,dashed,"\varphi_1"] & \dots \arrow[r,dashed] & (X_i,B_i) \arrow[r,dashed,"\varphi_i"] & \dots 
\end{tikzcd}
\end{equation}
be a $(K_X+B)$-MMP. Then $\varphi_i$ is an isomorphism near $\Nklt(X_i,B_i)$ for any $i\gg 0$.
\end{lem}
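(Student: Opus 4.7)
The strategy is to lift the MMP to a $\mathbb Q$-factorial dlt modification, invoke the bounded-index version of special termination on the lift (Lemma \ref{lem:special termination dim 5 index bdd}), and then descend the conclusion back to $X$.

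To begin, I would take a $\mathbb Q$-factorial dlt modification $\pi\colon (Y,D)\to (X,B)$, so that $K_Y+D=\pi^*(K_X+B)$, $(Y,D)$ is $\mathbb Q$-factorial dlt, and $\mult_E D=1$ for every $\pi$-exceptional divisor $E$ (such a modification exists by standard BCHM-type arguments). Since $(X,\Delta)$ is klt with $\Delta$ big and $K_X+B\sim_\bR \mu(K_X+\Delta)$, we are precisely in the situation of Lemma \ref{lem:dlt lift index bdd}: this gives a positive integer $r$ such that for any MMP type contraction $(Y,D)\dashrightarrow (Y',D')$, every $\mathbb Q$-Cartier Weil divisor on $Y'$ has Cartier index at most $r$.

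Next, using Lemma \ref{lem:lift MMP dlt}, the $(K_X+B)$-MMP lifts to a $(K_Y+D)$-MMP
\[
\begin{tikzcd}
(Y,D)=(Y_0,D_0) \arrow[r,dashed,"\psi_0"] & (Y_1,D_1) \arrow[r,dashed,"\psi_1"] & \cdots \arrow[r,dashed] & (Y_i,D_i) \arrow[r,dashed,"\psi_i"] & \cdots
\end{tikzcd}
\]
with $\mathbb Q$-factorial dlt pairs $(Y_i,D_i)$ and crepant morphisms $\pi_i\colon (Y_i,D_i)\to (X_i,B_i)$, where each $\psi_i$ is a sequence of MMP steps. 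Each individual step $(Y_i,D_i)\dashrightarrow (Y_{i+1},D_{i+1})$ is an MMP type contraction from $(Y,D)$, so the uniform index bound $r$ applies to every $Y_i$. Consequently Lemma \ref{lem:special termination dim 5 index bdd} (in dimension $5$) shows that after truncation each individual step of the $Y$-MMP is an isomorphism near $\lfloor D_i\rfloor$, and hence so is the composite $\psi_i$ for all $i\gg 0$.

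Finally, I would descend the isomorphism from $Y$ to $X$. Because $\pi_i$ is crepant and $(Y_i,D_i)$ is dlt, one has $\Nklt(X_i,B_i)=\pi_i(\Supp \lfloor D_i\rfloor)$; moreover any $\pi_i$-fiber meeting $\Nklt(X_i,B_i)$ is automatically contained in $\Supp \lfloor D_i\rfloor$, since $\pi_i$-exceptional divisors lie in $\lfloor D_i\rfloor$ and connected positive-dimensional $\pi_i$-fibers lie in $\Ex(\pi_i)$. Using this, together with the fact that only finitely many $\varphi_i$ are divisorial contractions (so for $i\gg 0$ each $\varphi_i$ is a flip), the isomorphism of $\psi_i$ near $\lfloor D_i\rfloor$ transfers to an isomorphism of $\varphi_i$ near $\Nklt(X_i,B_i)$: indeed, if the flipping locus $F_i$ of $\varphi_i$ met $\Nklt(X_i,B_i)$, pulling back to $Y_i$ would force $\psi_i$ to modify $\pi_i^{-1}(F_i)$ inside an open neighborhood of $\lfloor D_i\rfloor$, contradicting the isomorphism statement upstairs. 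This descent step is the most delicate part of the argument, since one must verify carefully that the behavior of $\pi_i$-fibers over non-klt points and the commutativity of the lifted diagram together rule out any flipping or divisorial behavior in a neighborhood of $\Nklt(X_i,B_i)$; formally this can be packaged via a direct application of the negativity lemma in the spirit of Lemma \ref{lem:one step MMP isom along div}.
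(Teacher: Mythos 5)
Your architecture coincides with the paper's proof: dlt modification of $(X,B)$, uniform index bound via Lemma \ref{lem:dlt lift index bdd}, lifting via Lemma \ref{lem:lift MMP dlt}, special termination via Lemma \ref{lem:special termination dim 5 index bdd}, and a descent by negativity. The one substantive point you pass over is your assertion that ``$\pi_i$-exceptional divisors lie in $\lfloor D_i\rfloor$.'' This holds by construction only for $i=0$; for $i\ge 1$ a component of $\Ex(\pi_i)$ could a priori be the strict transform of a divisor contracted by the downstairs MMP $\varphi_0,\dots,\varphi_{i-1}$ rather than of a component of $\Ex(\pi_0)$, and such a divisor need not have coefficient one in $D_i$. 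The paper devotes a separate inductive argument to this claim: since $X_i$ is $\bQ$-factorial, $\Ex(\pi_i)$ is purely divisorial, and for any component $E$ of $\Ex(\pi_i)$ the crepancy of $\pi_{i-1}$ and $\pi_i$ forces $a(E,X_{i-1},B_{i-1})=-\mult_E D_{i-1}=-\mult_E D_i=a(E,X_i,B_i)$, which by the negativity lemma rules out $E$ being the strict transform of a $\varphi_{i-1}$-exceptional divisor; hence $\Ex(\pi_i)=(\psi_{i-1})_*\Ex(\pi_{i-1})$ and induction applies. Without this, the identification $\lfloor D_i\rfloor=\pi_i^{-1}(\Nklt(X_i,B_i))$, on which your descent relies, is not available. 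You should supply that argument.

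Once that is in place, the final descent can be done more directly than your fiber-chasing sketch (and without truncating away divisorial contractions): if $\varphi_i$ were not an isomorphism near $\Nklt(X_i,B_i)$ for some large $i$, the negativity lemma produces a prime divisor $E$ over $X$ with center contained in $\Nklt(X_i,B_i)$ and $a(E,X_i,B_i)<a(E,X_{i+1},B_{i+1})$; but $c_{Y_i}(E)\subseteq\pi_i^{-1}(\Nklt(X_i,B_i))=\lfloor D_i\rfloor$, where $\psi_i$ is an isomorphism, so crepancy of $\pi_i$ and $\pi_{i+1}$ gives $a(E,X_i,B_i)=a(E,Y_i,D_i)=a(E,Y_{i+1},D_{i+1})=a(E,X_{i+1},B_{i+1})$, a contradiction.
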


\begin{proof}
By \cite[Corollary 1.36]{Kol13}, there exists a $\bQ$-factorial dlt pair $(Y,D)$ and a crepant birational morphism $\pi\colon (Y,D)\to (X,B)$ such that every $\pi$-exceptional divisor has coefficient $1$ in $D$. By Lemma \ref{lem:lift MMP dlt}, we can lift \eqref{eq:Q-factorial log canonical MMP} to a sequence
\[
\begin{tikzcd}
(Y,D)=:(Y_0,D_0) \arrow[r,dashed,"\psi_0"] & (Y_1,D_1) \arrow[r,dashed,"\psi_1"] & \cdots \arrow[r,dashed] & (Y_i,D_i) \arrow[r,dashed,"\psi_i"] & \cdots
\end{tikzcd}
\]
such that each $\psi_i$ is a sequence of steps of a $(K_{Y_i}+D_i)$-MMP, and there are crepant birational morphisms $\pi_i: (Y_i,D_i)\to (X_i,B_i)$. 
We claim that $\Ex(\pi_i)\subseteq\lfloor D_i \rfloor$ for all $i$. In fact, since $X_i$ is $\bQ$-factorial, $\Ex(\pi_i)$ is of pure codimension one in $Y_i$ by \cite[Corollary 2.63]{KM98}. In particular, the claim is clear when $i=0$, so we may assume that $i\ge 1$. For any irreducible component $E$ of $\Ex(\pi_i)$, we have
\[
a(E,X_i,\Delta_i)= -\mult_E D_i = -\mult_E D_{i-1}=a(E,X_{i-1},\Delta_{i-1}),
\]
so $E$ cannot be the strict transform of some $\varphi_{i-1}$-exceptional divisor (otherwise we get a contradiction to the negativity lemma). It follows that the strict transform of $E$ on $Y_{i-1}$ is contained in $\Ex(\pi_{i-1})$, hence $\Ex(\pi_i) = (\psi_{i-1})_* \Ex(\pi_{i-1})$, and the claim follows by induction on $i$. 

By Lemma \ref{lem:dlt lift index bdd}, there exists some positive integer $r$ such that the Cartier index of any $\bQ$-Cartier Weil divisor on $Y_i$ is at most $r$ for all $i$. Thus by Lemma \ref{lem:special termination dim 5 index bdd}, we see that $\psi_i$ is an isomorphism near $\lfloor D_i\rfloor$ for any $i\gg 0$. Since $(Y_i,D_i)$ is dlt and $\pi_i$ is crepant, combined with the previous claim $\Ex(\pi_i)\subseteq\lfloor D_i \rfloor$ we see that $\lfloor D_i \rfloor=\pi_i^{-1}(\Nklt(X_i,B_i))$. Therefore, $\varphi_i$ is also an isomorphism near $\Nklt(X_i,B_i)$ for any $i\gg 0$; otherwise, by the negativity lemma, for some $i\gg 0$ we can find a prime divisor $E$ over $X$ whose center is contained in $\Nklt(X_i,B_i)$ such that
\[
a(E,Y_i,D_i)=a(E,X_i,B_i)<a(E,X_{i+1},B_{i+1})=a(E,Y_{i+1},D_{i+1}),
\]
which is a contradiction as the center of $E$ on $Y_i$ is contained in $\lfloor D_i \rfloor$ and $\psi_i$ is an isomorphism around $\lfloor D_i \rfloor$.
\end{proof}

\begin{proof}[Proof of Theorem \ref{thm:termination dim<=5} when $\dim X=5$]
Let \[
\begin{tikzcd}
(X,\Delta)=:(X_0,\Delta_0) \arrow[r,dashed,"\varphi_0"] & (X_1,\Delta_1) \arrow[r,dashed,"\varphi_1"] & \dots \arrow[r,dashed] & (X_i,\Delta_i) \arrow[r,dashed,"\varphi_i"] & \dots 
\end{tikzcd}
\]
be a $(K_X+\Delta)$-MMP. Let $D_i$ be the strict transform of $D$ on $X_i$, and let $t_0=\lct(X,\Delta;D)$. Then $(X,\Delta+t_0 D)$ is log canonical and $K_X+\Delta+t_0 D\sim_\bR (1+t_0)(K_X+\Delta)$. Since $K_X+\Delta\sim_\bR D$, the MMP is an isomorphism away from $\Supp(D_i)$ and any $(K_X+\Delta)$-MMP is also a $(K_X+\Delta+t_0 D)$-MMP. By Lemma \ref{lem:special termination dim 5 log canonical}, after finitely many steps, the MMP is an isomorphism near $\Nklt(X_i,\Delta_i+t_0 D_i)$. In other words, after truncation it induces a sequence of $(K_U+\Delta_U)$-MMP where $U=X\setminus \Nklt(X,\Delta+t_0 D)$ and $\Delta_U=\Delta|_U$. Now $(U,\Delta_U+t_0 D_U)$ is klt (where $D_U=D|_U$) and hence $t_1:=\lct(U,\Delta_U;D_U)>t_0$. Replacing $(X,\Delta)$, $D$ with $(U,\Delta_U)$, $D_U$ and repeat the same argument, we see that the MMP is again an isomorphism near $\Nklt(X_i,\Delta_i+t_1 D_i)$ after finitely many steps, and thus we can inductively construct, if the MMP does not terminate, a strictly increasing sequence $t_0<t_1<\dots$ of log canonical thresholds, where each $t_i$ has the form $\lct_{x_j}(X_j,\Delta_j;D_j)$ for some $j\in\bN$ and some point $x_j\in X_j$. This contradicts the ACC of log canonical thresholds \cite{HMX-ACC}*{Theorem 1.1}. It follows that the MMP must terminate. 
\end{proof}

\subsection{Effective termination}

We next upgrade the termination result from the previous subsection to an effective one and give some further applications.

\begin{thm}\label{thm: eff termination for gt}
Let $(X,\Delta)$ be a klt pair of dimension $\leq 5$ and $X\to T$ a projective morphism. Assume that at least one of the following holds.
\begin{enumerate}
    \item $K_X+\Delta$ is big over $T$,
    \item $K_X+\Delta$ is pseudo-effective over $T$, and $\Delta$ is big over $T$, or
    \item $\dim X\le 4$, and $\Delta$ is big over $T$.
\end{enumerate}
Then there are finitely many birational maps $\phi_i: X\dashrightarrow Y_i$ over $T$, $1\le i\le m$, such that for any sequences of steps of a $(K_X+\Delta)$-MMP $\phi: X\dashrightarrow Y$ over $T$, we have $\phi=\phi_i$ for some $i$. In particular, any $(K_X+\Delta)$-MMP over $T$ terminates in at most $m$ steps. 
\end{thm}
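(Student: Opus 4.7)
The plan is to combine the termination already established in Corollary \ref{cor: termination of mmp in dim 5 for gt} with a K\H{o}nig's lemma argument. Under each of the hypotheses (1), (2), (3), every $(K_X+\Delta)$-MMP over $T$ is known to terminate; the remaining task will be to upgrade this into a uniform bound on the collection of birational maps that can arise from such MMPs.

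First I would apply Lemma \ref{lem: gt klt sim to big boundary} to reduce to the situation where $\Delta$ is big over $T$: in (1) this replaces $\Delta$ with a klt $\Delta'$ having $K_X+\Delta'\sim_{\bR,T}\mu(K_X+\Delta)$ and $\Delta'$ big, and in (2) it replaces $\Delta$ with a klt $\Delta_0\sim_{\bR,T}\Delta$ satisfying $\Delta_0\geq A\geq 0$ for some relatively ample $A$; case (3) already has this shape. After the reduction, bigness of the boundary persists along any MMP type contraction since the inverse of such a contraction does not contract any divisor. I would then form the tree $\mathcal{T}$ with root $(X,\Delta)$, nodes the equivalence classes of intermediate pairs $(X_i,\Delta_i)$ reached by finite MMP sequences (with two sequences declared equivalent when they induce the same birational map $X\dashrightarrow X_i$ over $T$), and edges corresponding to single MMP steps. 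Corollary \ref{cor: termination of mmp in dim 5 for gt} guarantees that every branch of $\mathcal{T}$ is finite.

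The main technical point---which I expect to be the principal obstacle---is that each node has \emph{finite} out-degree, equivalently, that each pair $(X_i,\Delta_i)$ admits only finitely many $(K_{X_i}+\Delta_i)$-negative extremal rays in $\overline{NE}(X_i/T)$. To verify this, by Kodaira's lemma I would write $\Delta_i\sim_{\bR,T}A+E$ with $A$ ample over $T$ and $E\geq 0$; then for a general $A'\sim_{\bR,T}A$ the pair $(X_i,\tfrac12 A'+E)$ is klt, and every $(K_{X_i}+\Delta_i)$-negative extremal ray $R$ of $\overline{NE}(X_i/T)$ satisfies $(K_{X_i}+\tfrac12 A'+E+\tfrac14 A')\cdot R<0$. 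Finiteness of such rays then follows from the local-finiteness part of the cone theorem (\cite{KM98}*{Theorem 3.7}) applied to $(X_i,\tfrac12 A'+E)$ with the ample class $\tfrac14 A'$.

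Finally, since $\mathcal{T}$ is a tree with finite branching and no infinite branch, K\H{o}nig's lemma implies $\mathcal{T}$ itself is finite. Letting $\phi_1,\ldots,\phi_m$ be the birational maps corresponding to its nodes produces the asserted finite list, and because each step of any $(K_X+\Delta)$-MMP traverses a distinct edge of $\mathcal{T}$, the length of any such MMP is bounded by $m$.
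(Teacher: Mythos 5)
Your proposal is correct and follows essentially the same route as the paper: reduce to the case of a big boundary via Lemma \ref{lem: gt klt sim to big boundary}, show each intermediate pair admits only finitely many negative extremal rays via the cone theorem (the paper's Lemma \ref{lem: possible of one-step mmp}), and conclude by K\H{o}nig's lemma (the paper's Lemma \ref{lem: temrination for gt imply effective termination}, phrased there as the contrapositive construction of an infinite MMP). One small imprecision: for $\Delta_i\sim_{\bR,T}A+E$ with $E\ge 0$ arbitrary, the pair $(X_i,\tfrac12 A'+E)$ need not be klt (e.g.\ $E$ may have coefficients $\ge 1$); the standard fix, used in Lemma \ref{lem: gt klt sim to big boundary}(2), is to pass to the convex combination $(1-t)\Delta_i+t(A+E)$ for $0<t\ll 1$, which is klt and still contains an ample part, and then apply the cone theorem to that pair.
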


This essentially follows from Theorem \ref{thm:termination dim<=5} and K\H{o}nig's lemma: there are only finitely many extremal rays we can possibly contract at each step of the MMP, so if the MMP can have an arbitrary large number of steps, we would be able to construct an MMP that does not terminate, which contradicts Corollary \ref{cor: termination of mmp in dim 5 for gt}. We spell out the details in the next two lemmas.

\begin{lem}\label{lem: possible of one-step mmp}
Let $(X,\Delta)$ be a $\mathbb Q$-factorial klt pair and $X\to T$ a projective morphism such that $K_X+\Delta$ or $\Delta$ is big over $T$. Then there are only finitely many possible $(K_X+\Delta)$-divisorial contractions or flips over $T$.
\end{lem}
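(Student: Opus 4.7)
My strategy is to reduce the problem to a standard finiteness statement for $(K_X+\Delta)$-negative extremal rays in $\overline{NE}(X/T)$, which is well controlled once the boundary contains an ample summand.

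First, I will use Lemma \ref{lem: gt klt sim to big boundary} to arrange a convenient form of the boundary. If $K_X+\Delta$ is big over $T$, part (1) of that lemma lets me replace $\Delta$ by a klt boundary $\Delta'$ with $K_X+\Delta' \sim_{\bR,T} \mu(K_X+\Delta)$ for some $\mu>0$ and $\Delta'$ big over $T$; I then apply part (2) to further replace $\Delta'$ by a klt boundary $\Delta_0$ with $K_X+\Delta_0 \sim_{\bR,T} K_X+\Delta'$ and $\Delta_0 \geq A \geq 0$ for some $\bR$-divisor $A$ ample over $T$. In the case that only $\Delta$ is assumed big, I apply part (2) directly. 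In either case the set of possible $(K_X+\Delta)$-MMPs over $T$ is preserved, since MMPs depend only on the $\bR$-linear equivalence class of $K_X+\Delta$ over $T$ up to positive rescaling. Hence I may assume $\Delta \geq A \geq 0$ with $A$ ample over $T$ and $(X,\Delta)$ klt and $\bQ$-factorial.

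Next I would observe that any $(K_X+\Delta)$-divisorial contraction or flip is uniquely determined by the $(K_X+\Delta)$-negative extremal ray $R \subset \overline{NE}(X/T)$ that it contracts: a divisorial contraction is the unique contraction morphism associated to $R$, while a flip is uniquely determined by its flipping contraction, which is itself the contraction of $R$. Therefore it suffices to show that there are only finitely many $(K_X+\Delta)$-negative extremal rays in $\overline{NE}(X/T)$.

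For the finiteness, I would pick $0<\delta\ll 1$ so that the pair $(X, \Delta-\delta A)$ is still klt. For any $(K_X+\Delta)$-negative extremal ray $R$, one has $A\cdot R>0$ since $A$ is ample over $T$ and $R$ lies in the relative Mori cone, hence
\[
(K_X+\Delta-\delta A)\cdot R \;=\; (K_X+\Delta)\cdot R - \delta(A\cdot R) \;<\; 0,
\]
so $R$ is also a $(K_X+\Delta-\delta A)$-negative extremal ray. Applying the finiteness part of the cone theorem \cite[Theorem 3.7]{KM98} to the klt pair $(X, \Delta-\delta A)$ together with the ample-over-$T$ perturbation $\delta A$ yields only finitely many extremal rays $R$ of $\overline{NE}(X/T)$ with $(K_X+(\Delta-\delta A)+\delta A)\cdot R<0$; these contain all $(K_X+\Delta)$-negative extremal rays, so we are done. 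I do not expect a serious obstacle here: the main delicate point is the initial reduction via Lemma \ref{lem: gt klt sim to big boundary}, which must be carried out in the correct order to cover both hypotheses (1) and (2) without losing any possible MMP.
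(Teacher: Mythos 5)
Your proof is correct and follows essentially the same route as the paper's: reduce via Lemma \ref{lem: gt klt sim to big boundary} to the case where $\Delta$ contains a summand ample over $T$, then conclude by the finiteness of $(K_X+\Delta)$-negative extremal rays in the cone theorem, each such ray determining at most one divisorial contraction or flip. The only cosmetic difference is that the relevant reference is the relative cone theorem \cite[Theorem 3.25(2)]{KM98} rather than the absolute statement, and your intermediate perturbation $(X,\Delta-\delta A)$ plays exactly the role of the paper's decomposition $\Delta=\Delta_0+A$.
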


\begin{proof}
By Lemma \ref{lem: gt klt sim to big boundary}, we may assume that $\Delta=\Delta_0+A$, where $\Delta_0\ge 0$ and $A\geq 0$ is an ample $\mathbb R$-divisor. By the cone theorem \cite[Theorem 3.25(2)]{KM98}, there are only finitely many $(K_X+\Delta)$-negative rays over $T$. In particular, there are only finitely many possible $(K_X+\Delta)$-divisorial contractions or flips over $T$. 
\end{proof}

\begin{lem}\label{lem: temrination for gt imply effective termination}
Let $(X,\Delta)$ be a $\mathbb Q$-factorial klt pair and $X\to T$ a projective morphism such that $K_X+\Delta$ or $\Delta$ is big over $T$. Assume that any $(K_X+\Delta)$-MMP over $T$ terminates. Then there are finitely many birational maps $\phi_i: X\dashrightarrow Y_i$ over $T$, $1\le i\le m$, such that for any sequences of steps of a $(K_X+\Delta)$-MMP $\phi: X\dashrightarrow Y$ over $T$, we have $\phi=\phi_i$ for some $i$. In particular, any $(K_X+\Delta)$-MMP over $T$ terminates in at most $m$ steps. 
\end{lem}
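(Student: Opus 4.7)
The plan is to apply K\"onig's lemma to the tree of all possible MMP sequences. Form a rooted tree $\mathcal{T}$ whose nodes at depth $i$ are the (ordered) sequences of $i$ consecutive steps of a $(K_X+\Delta)$-MMP over $T$, with root the empty sequence; the children of a node $\sigma$ ending at the pair $(X_i,\Delta_i)$ are obtained by appending one further step of an MMP from $X_i$. The finite list of birational maps in the conclusion will be obtained by collecting the birational maps $X\dashrightarrow Y$ associated to the nodes of $\mathcal{T}$.

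To apply K\"onig's lemma, we need two facts. First, $\mathcal{T}$ is finitely branching. Given any intermediate pair $(X_i,\Delta_i)$, the pair is again $\mathbb Q$-factorial klt, and the pushforward of a divisor big over $T$ under the birational contraction $X\dashrightarrow X_i$ remains big over $T$; hence the hypothesis of Lemma \ref{lem: possible of one-step mmp} is satisfied by $(X_i,\Delta_i)\to T$, and by that lemma there are only finitely many divisorial contractions or flips that can extend the given sequence. Second, $\mathcal{T}$ has no infinite branch, since an infinite branch would correspond to a non-terminating $(K_X+\Delta)$-MMP over $T$, contradicting the hypothesis.

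By K\"onig's lemma, a finitely branching tree with no infinite branch is finite. Let $m$ be the total number of nodes of $\mathcal{T}$, and let $\phi_1,\ldots,\phi_m$ be the birational maps associated to those nodes. Then every partial sequence of MMP steps is a node of $\mathcal{T}$ and hence coincides with some $\phi_i$. Moreover, an MMP of length $k$ corresponds to a path of length $k$ in $\mathcal{T}$ (and thus $k+1$ distinct nodes), so the depth of $\mathcal{T}$, and hence the length of any MMP, is bounded by $m$. The only subtlety in the argument is verifying that the hypotheses (bigness over $T$, $\mathbb Q$-factoriality, klt) propagate through the MMP so that Lemma \ref{lem: possible of one-step mmp} can be applied at every node; this is essentially immediate. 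No real obstacle remains once the termination hypothesis is in hand — the content of the lemma is just the combinatorial upgrade from termination to effective termination via K\"onig's lemma.
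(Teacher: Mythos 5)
Your proof is correct and follows essentially the same route as the paper: the authors also invoke K\H{o}nig's lemma (phrased contrapositively, by inductively constructing an infinite MMP from the assumed failure of the statement), with Lemma \ref{lem: possible of one-step mmp} supplying the finite branching exactly as you use it. Your observation that bigness of $K_X+\Delta$ or $\Delta$ over $T$, $\mathbb Q$-factoriality, and the klt condition propagate along the MMP is the only point needing verification, and it holds as you say.
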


\begin{proof}
By Lemma \ref{lem: possible of one-step mmp}, there are only finitely many possible $(K_X+\Delta)$-divisorial contractions or flips $\phi_i:(X_0,\Delta_0):=(X,\Delta)\dashrightarrow (X_1^{(j)},\Delta_1^{(j)})$, $1\le j\le m_1$, where $\Delta_1^{(j)}$ is the strict transform of $\Delta_0$ on $X_1^{(j)}$. If the lemma does not hold for $(X,\Delta)$, then the lemma does not hold for $(X_1^{(j)},\Delta_1^{(j)})$ for some $j$. Let $(X_1,\Delta_1):=(X_1^{(j)},\Delta_1^{(j)})$ and repeat the same argument with $(X_1,\Delta_1)$ in place of $(X_0,\Delta_0)$. Inductively, we may construct $(X_i,\Delta_i)$, $i\ge 1$, such that $$(X_0,\Delta_0)\dashrightarrow (X_1,\Delta_1) \dashrightarrow \cdots \dashrightarrow (X_i,\Delta_i)\dashrightarrow (X_{i+1},\Delta_{i+1})\dashrightarrow \cdots$$
is an infinite sequence of $(K_X+\Delta)$-MMP over $T$, and the lemma does not hold for each $(X_i,\Delta_i)$. This is impossible as any $(K_X+\Delta)$-MMP over $T$ terminates.
\end{proof} 

\begin{proof}[Proof of Theorem \ref{thm: eff termination for gt}]
This follows from Corollary \ref{cor: termination of mmp in dim 5 for gt} and Lemma \ref{lem: temrination for gt imply effective termination}.
\end{proof}

\begin{proof}[Proof of Theorem \ref{main:termination}]
This follows from Theorem \ref{thm: eff termination for gt}.
\end{proof}

As another application of the termination result in this section, we prove the strong Sarkisov program conjecture \cite{He23}*{Conjecture 3.2} in dimension $5$. We refer to \cite{He23} and the reference therein for more details of the strong Sarkisov program.
\begin{cor}
    The strong Sarkisov program holds in dimension $5$.
\end{cor}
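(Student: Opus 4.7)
The plan is to reduce the strong Sarkisov program in dimension $5$ to the termination of $(K_X+\Delta)$-MMPs with $\Delta$ big, which is supplied by Corollary \ref{cor: termination of mmp in dim 5 for gt} and Theorem \ref{thm: eff termination for gt}. Recall that in the Sarkisov program, given a birational map $\phi\colon X/S\dashrightarrow X'/S'$ between two Mori fiber spaces, one chooses an ample divisor $A'$ on $X'$, takes its strict transform $A$ on $X$ (which is automatically big), and produces a Sarkisov link by running a suitable $(K_X+cA)$-MMP with scaling over an appropriate base, for a carefully chosen $c>0$. The ``strong'' version formulated in \cite{He23}*{Conjecture 3.2} asserts that a canonical version of this algorithm terminates after finitely many links.

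First, I would unpack \cite{He23}*{Conjecture 3.2} in order to identify precisely which MMPs need to terminate along the Sarkisov algorithm in dimension $5$. The key observation is that at each stage of the algorithm, the relevant MMP is for a log pair $(X,\Delta)$ with $\Delta$ big over the base (or with $K_X+\Delta$ big over the base), since $A$ is big and this bigness is preserved throughout the construction (possibly after the elementary perturbation provided by Lemma \ref{lem: gt klt sim to big boundary}). In particular, every such MMP falls within the hypotheses of Corollary \ref{cor: termination of mmp in dim 5 for gt}.

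Next, I would apply Corollary \ref{cor: termination of mmp in dim 5 for gt} together with its effective refinement Theorem \ref{thm: eff termination for gt} to conclude that each such MMP in dimension $\le 5$ terminates after finitely many steps. Combined with the inductive structure of the Sarkisov algorithm, in which lower-dimensional MMPs also appear on fibers over a positive-dimensional base and are handled by the same theorems in lower dimensions, this yields termination of the Sarkisov algorithm itself, and hence the strong Sarkisov program in dimension $5$.

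The main obstacle is essentially a bookkeeping task: one must verify carefully that every MMP appearing in the Sarkisov algorithm for fivefolds genuinely satisfies the ``$\Delta$ big over the base'' (or ``$K_X+\Delta$ big over the base'') hypothesis of Corollary \ref{cor: termination of mmp in dim 5 for gt}, and that no auxiliary pseudo-effective MMP with non-big boundary is required. Once this is confirmed, the conclusion follows by directly plugging our termination result into the reduction carried out in \cite{He23}.
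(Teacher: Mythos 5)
Your proposal is correct and follows essentially the same route as the paper: the reduction of the strong Sarkisov program to termination of the relevant (big-boundary) MMPs, which you propose to verify by hand from \cite{He23}*{Conjecture 3.2}, is already packaged as \cite{He23}*{Corollary 3.5}, and the paper simply combines that with Corollary \ref{cor: termination of mmp in dim 5 for gt}.
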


\begin{proof}
    It follows from Corollary \ref{cor: termination of mmp in dim 5 for gt} and \cite{He23}*{Corollary 3.5}.
\end{proof}

\section{Explicit termination}\label{sec:explicit tof}

In this section, we give explicit bounds (see Corollary \ref{cor:terminal explicit}, Theorems \ref{thm:4fold explicit general type}, \ref{thm:4fold explicit boundary big}, and \ref{thm: threefold explcit upper bound}) on the number of steps in various types of MMP in dimension at most $4$. This improves Theorem \ref{thm: eff termination for gt}, which only gives the existence of an inexplicit upper bound.

\subsection{Invariants} \label{ss:explicit invariant}

Before we state the result, we need to introduce a few invariants that we shall use to formulate the explicit bound and recall some of their basic properties. 

\begin{defn}[Picard number]
Let $X$ be a smooth  variety. Let $j\colon X\hookrightarrow \overline X$ be a smooth projective compactification. We set $N^1(X):=\Pic(X)/j^*\Pic^0(\overline{X})$ and define the \emph{Picard number} of $X$ as
\[
\rho(X):=\dim N^1(X)_\bR.
\]
This is well-defined since $j^*\Pic^0(\overline{X})$ is independent of the compactification. In general, for any normal  variety $X$, we define $\rho(X):=\rho(U)$ where $U$ is the smooth locus of $X$.
\end{defn}

\begin{lem} \label{lem:compare rho}
Let $\pi\colon X\rightarrow Z$ be a projective birational morphism of normal varieties. Then $\rho(X)=\rho(Z)+\ell$, where $\ell$ is the number of $\pi$-exceptional divisors.
\end{lem}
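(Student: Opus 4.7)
The plan is to reduce the statement to the classical case of a projective birational morphism between smooth projective varieties, where the Picard number decomposition is a standard consequence of the excision sequence. In the smooth projective case, the excision sequence for Weil divisors (which coincides with the Picard sequence since $X$ is smooth) reads
\[
\bigoplus_{i=1}^\ell \bZ[E_i] \to \Pic(X) \to \Pic\bigl(X \setminus \bigcup\nolimits_i E_i\bigr) \to 0.
\]
Since $\pi$ restricts to an isomorphism $X \setminus \bigcup_i E_i \cong Z \setminus \pi(\bigcup_i E_i)$, and $\pi(\bigcup_i E_i)$ has codimension at least $2$ in the smooth variety $Z$, we have $\Pic(X \setminus \bigcup_i E_i) \cong \Pic(Z)$. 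After tensoring with $\bR$ and quotienting by algebraic equivalence, this gives a short exact sequence
\[
0 \to \bigoplus_i \bR[E_i] \to N^1(X)_\bR \to N^1(Z)_\bR \to 0,
\]
where the surjection is split by $\pi^\ast$. Injectivity of $\pi^\ast$ follows from the projection formula together with $\pi_\ast \cO_X = \cO_Z$; linear independence of the $[E_i]$ modulo $\pi^\ast N^1(Z)_\bR$ is the standard negativity of exceptional divisors, via non-degeneracy of the intersection pairing between the $E_i$ and curves contracted by $\pi$ in their fibers. This yields $\rho(X) = \rho(Z) + \ell$ in the smooth projective case.

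To handle the general case, we use the definition $\rho(Y) = \rho(Y^{\mathrm{sm}})$ to replace $X$ and $Z$ by their smooth loci. We then take smooth projective compactifications $\bar X \supseteq X^{\mathrm{sm}}$ and $\bar Z \supseteq Z^{\mathrm{sm}}$, and after blowing up to resolve the indeterminacy of the induced rational extension, we may assume $\bar\pi\colon \bar X \to \bar Z$ is a projective birational morphism of smooth projective varieties. The smooth projective case gives $\rho(\bar X) = \rho(\bar Z) + m$, where $m$ is the number of $\bar\pi$-exceptional prime divisors. Combining with the quotient descriptions $N^1(Y^{\mathrm{sm}})_\bR = N^1(\bar Y)_\bR / \langle \text{classes of boundary divisors of } \bar Y \setminus Y^{\mathrm{sm}}\rangle$, a direct comparison yields $\rho(X^{\mathrm{sm}}) = \rho(Z^{\mathrm{sm}}) + \ell$.

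The main obstacle lies in the bookkeeping of the reduction step: one must show that the $m$ exceptional divisors of $\bar\pi$ split, modulo boundary contributions on either side, into exactly $\ell$ effective classes. The $\bar\pi$-exceptional prime divisors are of several types: (i) strict transforms of the original $\ell$ exceptional divisors of $\pi$; (ii) strict transforms of boundary divisors in $\bar X \setminus X^{\mathrm{sm}}$ that happen to be $\bar\pi$-exceptional; and (iii) new divisors introduced by the resolution of indeterminacy. Divisors of types (ii) and (iii) contribute to the boundary subspaces on one side or the other and cancel in the quotient, while those of type (i) survive. A subtle point is that an exceptional divisor $E_i$ of $\pi$ whose image lies in the singular locus of $Z$ has its generic point in $X^{\mathrm{sm}}$ by normality of $X$, so its strict transform in $\bar X$ remains a divisor meeting $X^{\mathrm{sm}}$ nontrivially and does contribute to $\rho(X^{\mathrm{sm}})$; this is essential for the count to come out to exactly $\ell$.
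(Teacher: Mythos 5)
Your smooth projective case is fine: excision plus the negativity lemma is the standard argument and matches what the cited reference does. The gap is in the reduction of the general quasi-projective normal case to it, and it is not mere bookkeeping. First, resolving the indeterminacy of $\bar X\dashrightarrow \bar Z$ is not harmless: since $\bar Z$ compactifies only $Z^{\mathrm{sm}}$, the rational map can be undefined at codimension $\ge 2$ points of $X^{\mathrm{sm}}$ lying over the singular locus of $Z$, so the blow-ups needed to produce a morphism may have centers inside $X^{\mathrm{sm}}$. After such blow-ups $\bar X$ no longer contains $X^{\mathrm{sm}}$ as an open subset, the identity $N^1(X^{\mathrm{sm}})_\bR=N^1(\bar X)_\bR/\langle\text{boundary classes}\rangle$ on which your comparison rests is lost, and the resulting type (iii) divisors are not boundary divisors, so they do not ``cancel in the quotient'' as claimed. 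Second, even where the quotient description applies, $\rho(X^{\mathrm{sm}})$ differs from $\rho(\bar X)$ by the \emph{dimension of the span} of the boundary classes, not by their number; your count-based cancellation needs the independence statements that make these agree. Third, your trichotomy is inaccurate at a key point: a $\pi$-exceptional divisor $E_i$ need not be $\bar\pi$-exceptional at all (for example, for $Z$ a cone over an elliptic curve and $X$ the blow-up of the vertex, one may take $\bar X=\bar Z=\bP(L\oplus\cO)$ with $\bar\pi$ the identity; the exceptional divisor of $\pi$ maps isomorphically onto a boundary divisor of $\bar Z$). In that situation the contribution of $E_i$ to $\rho(X)-\rho(Z)$ comes from the boundary of $\bar Z$, not from the exceptional count $m$, so the asserted identification of the $m$ divisors of $\bar\pi$ with ``$\ell$ surviving classes plus cancelling boundary'' breaks down.

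The paper's route avoids all of this: it quotes the quasi-projective statement with smooth source and normal target (\cite{AHK07}*{Lemma 1.6(1)}) and then takes a single resolution $p\colon W\to X$, applying that statement to both $p$ and $\pi\circ p$; since the exceptional divisors of $\pi\circ p$ are those of $p$ together with the strict transforms of the $\ell$ divisors contracted by $\pi$, the formula follows by subtraction with no compactification bookkeeping. If you want a self-contained proof, the missing ingredient you should establish is precisely that smooth-source, normal-target, quasi-projective case; your excision-plus-negativity argument adapts to it if you compactify $Z$ itself (not $Z^{\mathrm{sm}}$) and take the closure of the graph of $\pi$, so that the boundary of $\bar X$ maps into the boundary of $\bar Z$ and no indeterminacy appears over $X$.
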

\begin{proof}
If $X$ is smooth, this follows from \cite[Lemma 1.6(1)]{AHK07}. The general case can be reduced to this by considering a common resolution of $X$ and $Z$.
\end{proof}

\begin{defn}[Algebraic Betti number]
Let $X$ be a variety and let $m\in \bN$. We denote by $H^{\alg}_{2m}(X)$ the subspace in the singular homology group
$H_{2m}(X,\mathbb R)$ generated by the the classes of $m$-dimensional complete algebraic subvarieties. We denote by $h^{\alg}_{2m}(X)$ the dimension of $H^{\alg}_{2m}(X)$.
\end{defn}

\begin{lem}\label{lem: compare h^alg}
    Let $m\in\bN$ and let $\pi\colon X\rightarrow Z$ be a projective birational morphism of normal varieties such that $\dim \Ex(\pi)\leq m$. Then 
    \begin{equation} \label{eq:h^alg}
        h^{\alg}_{2m}(Z) + \min\{1,\ell\}\le h^{\alg}_{2m}(X) \le h^{\alg}_{2m}(Z) + \ell
    \end{equation}
    where $\ell$ is the number of $m$-dimensional complete irreducible components of $\Ex(\pi)$.
\end{lem}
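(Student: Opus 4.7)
The plan is to analyze the pushforward $\pi_{*}\colon H^{\alg}_{2m}(X)\to H^{\alg}_{2m}(Z)$ coming from singular homology. For any complete irreducible $m$-dimensional subvariety $V\subseteq X$ one has $\pi_{*}[V]=\deg(V/\pi(V))\cdot[\pi(V)]$ if $\dim\pi(V)=m$, and $\pi_{*}[V]=0$ otherwise, so the restriction to algebraic classes is well defined. Since $\pi$ contracts each irreducible component of $\Ex(\pi)$, we have $\dim\pi(\Ex(\pi))\le m-1$; in particular every complete irreducible $m$-dimensional $W\subseteq Z$ is not contained in $\pi(\Ex(\pi))$, so its strict transform $W'$ is a complete $m$-dimensional subvariety of $X$ with $\pi_{*}[W']=[W]$. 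Hence $\pi_{*}|_{H^{\alg}_{2m}}$ is surjective, and it suffices to show that $\dim\ker(\pi_{*}|_{H^{\alg}_{2m}})$ is bounded above by $\ell$ and below by $\min\{1,\ell\}$.

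For the upper bound I will invoke the long exact sequence of the pair $(X,\Ex(\pi))$ together with the excision-type isomorphism $H_{*}(X,\Ex(\pi);\mathbb{R})\cong H_{*}(Z,\pi(\Ex(\pi));\mathbb{R})$ induced by the homeomorphism $X\setminus\Ex(\pi)\cong Z\setminus\pi(\Ex(\pi))$. Because $\dim\pi(\Ex(\pi))\le m-1$, we have $H_{k}(\pi(\Ex(\pi));\mathbb{R})=0$ for $k\ge 2m-1$, so the long exact sequence of $(Z,\pi(\Ex(\pi)))$ yields $H_{2m}(Z;\mathbb{R})\xrightarrow{\sim} H_{2m}(Z,\pi(\Ex(\pi));\mathbb{R})$. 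Splicing produces the exact sequence
\[
H_{2m}(\Ex(\pi);\mathbb{R})\to H_{2m}(X;\mathbb{R})\xrightarrow{\pi_{*}} H_{2m}(Z;\mathbb{R}).
\]
The standard fact that a noncomplete irreducible complex variety of top complex dimension has vanishing top singular homology, whereas a complete one contributes a single copy of $\mathbb{R}$ generated by the fundamental class, implies that $H_{2m}(\Ex(\pi);\mathbb{R})$ is spanned by $[E_{1}],\dots,[E_{\ell}]$ and hence has dimension at most $\ell$. Therefore $\dim\ker(\pi_{*}|_{H_{2m}(X;\mathbb{R})})\le\ell$, and a fortiori the same bound holds on the subspace $H^{\alg}_{2m}(X)$, which yields $h^{\alg}_{2m}(X)\le h^{\alg}_{2m}(Z)+\ell$.

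For the lower bound, when $\ell\ge 1$ I will verify that $[E_{1}]\ne 0$ in $H_{2m}(X;\mathbb{R})$, which furnishes a nontrivial kernel element. Choosing any projective compactification $X\hookrightarrow\overline{X}$ by Nagata, the image of $[E_{1}]$ in $H_{2m}(\overline{X};\mathbb{R})$ is the fundamental class of the complete cycle $E_{1}\subseteq\overline{X}$, which pairs positively with $\omega^{m}$ for an ample class $\omega$ on $\overline{X}$ via $\int_{E_{1}}\omega^{m}=\vol_{\omega}(E_{1})>0$. Consequently $[E_{1}]\ne 0$ already in $H_{2m}(X;\mathbb{R})$, so the kernel has dimension at least $1$; the case $\ell=0$ follows from surjectivity alone. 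The main technical hurdle will be justifying the excision-type isomorphism and the computation of $H_{2m}(\Ex(\pi);\mathbb{R})$ in the quasi-projective setting, both of which reduce to the standard topological fact that a noncomplete irreducible complex variety of top complex dimension has vanishing top singular homology, combined with a Mayer--Vietoris argument to handle the nonequidimensional case.
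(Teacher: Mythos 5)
Your proposal is correct and follows essentially the same route as the paper: surjectivity of $\pi_*$ on algebraic classes plus the nonvanishing of the class of a contracted complete $m$-dimensional component (detected in a projective compactification) for the lower bound, and the long exact sequence of the pair $(X,\Ex(\pi))$ combined with excision and the Mayer--Vietoris computation of $H_{2m}(\Ex(\pi),\bR)$ for the upper bound. The only cosmetic difference is that you realize preimages of complete $m$-dimensional subvarieties of $Z$ via strict transforms, whereas the paper simply notes that every complete subvariety of $Z$ is the image of one in $X$ of the same dimension.
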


\begin{proof}
For simplicity, we abbreviate all singular homology groups $H_*(-,\bR)$ as $H_*(-)$ in this proof.

As $\pi$ is projective, every complete subvariety of $Z$ is the image of some complete subvariety of $X$ of the same dimension. Thus the natural homomorphism $\pi_*\colon H_{2m}(X)\to H_{2m}(Z)$ induces a surjection $H^{\alg}_{2m}(X) \twoheadrightarrow H^{\alg}_{2m}(Z)$. In particular, $h^{\alg}_{2m}(Z)\le h^{\alg}_{2m}(X)$. For any complete subvariety $V\subseteq X$ of dimension $m$ and any projective compactification $\overline{X}$ of $X$, we have $[V]\neq 0\in H^{\alg}_{2m}(\overline{X})$, hence $[V]\neq 0\in H^{\alg}_{2m}(X)$ as well. If $\pi$ contracts at least one complete subvariety of dimension $m$ (i.e. $\ell \ge 1$), then it follows that $\ker(\pi_*)\neq 0$ and we get $h^{\alg}_{2m}(Z)\le h^{\alg}_{2m}(X)-1$. This proves the first inequality in \eqref{eq:h^alg}.

Let $W=\Ex(\pi)$. By assumption, $\dim \pi(W)\le m-1$, thus $H_i (\pi(W))=0$ for any $i>2m-2$. By the long exact sequence of homology we deduce that $H_{2m}(Z)\cong H_{2m}(Z,\pi(W))$ where $H_{2m}(Z,\pi(W))$ is the relative homology. Since $X\setminus W\cong Z\setminus \pi(W)$, by excision we also have $H_{2m}(Z,\pi(W))\cong H_{2m}(X,W)$. Combining with the long exact sequence of homology we see that
\[
H_{2m}(W)\to H_{2m}(X)\to H_{2m}(X,W)\cong H_{2m}(Z)
\]
is exact. Note that $\dim H_{2m}(W) = \ell$ by the Mayer–Vietoris sequence and the fact that for any variety $V$ of dimension at most $m$, we have 
\[
\dim H_{2m}(V) = \begin{cases}
    1, & \mbox{if $V$ is complete and } \dim V = m, \\
    0, & \mbox{otherwise}.
\end{cases}
\]
It follows that $\ker(\pi_*)\le \ell$
and hence $h^{\alg}_{2m}(X) \le h^{\alg}_{2m}(Z) + \ell$. This proves the other inequality in \eqref{eq:h^alg}.
\end{proof}

\begin{defn} \label{defn:rho,d,s}
Let $(X,\Delta)$ be a klt pair of dimension $n$ and let $\widetilde{B}$ be the normalization of $B:=\Supp(\Delta)$. We define
\begin{equation} \label{eq:rho(X,Delta)}
\rho(X,\Delta):=\rho(X)+\rho(\widetilde{B})+h^{\mathrm{alg}}_{2n-4}(X).
\end{equation}
Following \cite{AHK07}, we call an exceptional divisor $E$ over $X$ with $a(E,X,\Delta)<1$ an \textit{echo} of $(X,\Delta)$ if its center $c_X(E)$ is a codimension one point in the smooth locus of $B$. We also define
\begin{equation} \label{eq:d(X,Delta)}
    d(X,\Delta):=\#\left\{ 
    \begin{tabular}{c}
         \mbox{Exceptional prime divisor $E$ over $X$ with}  \\
         \mbox{$a(E,X,\Delta)<1$ that is not an echo of $(X,\Delta)$}
    \end{tabular}
    \right\}.
\end{equation}
Note that if $(X,\Delta)$ is terminal, then $d(X,\Delta)<+\infty$ by \cite{K+-flip-abundance}*{Lemma (4.14.2.1)} or \cite{AHK07}*{Lemma 1.5}.
Finally, we define
\begin{equation} \label{eq:s(X,Delta)}
    s(X,\Delta):=\min_{(Y,D)}\{ \rho(Y,D)+d(Y,D)\}
\end{equation}
where the minimum runs over all terminal pairs $(Y,D)$ with a projective birational morphism $\pi\colon (Y,D)\to (X,\Delta)$ such that $K_Y+D = \pi^* (K_X+\Delta) +E$ for some $\pi$-exceptional $\mathbb R$-divisor $E\ge 0$.
\end{defn}

Since the last invariant $s(X,\Delta)$ will play a major role in controlling the number of steps in the MMP, we give an explicit upper bound of $s(X,\Delta)$ at least for log smooth pairs. We also bound the invariant $e_+(X,\Delta)$ defined after \eqref{eq:e(X,Delta)}. 

\begin{lem} \label{lem:s(X,Delta) log smooth}
Let $(X,\Delta=\sum b_i B_i)$ be a  log smooth pair of dimension $n$. Let $N$ be a positive integer. Assume that $b_i\le 1-\frac{1}{N}$ for all $i$, $(X,\Delta)$ has at most $N$ strata, each stratum has Picard number at most $N$, and $h_{2n-4}^{\alg}(X)\le N$. Then
\[
e_+(X,\Delta)< N^{n+1}\quad \mathrm{and} \quad  s(X,\Delta) < 3N^{2n+3}.
\]
\end{lem}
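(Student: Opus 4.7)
The plan is to prove the two bounds in turn. The bound $e_+(X,\Delta)<N^{n+1}$ will come from counting exceptional divisorial valuations of non-positive discrepancy using the toroidal structure of a log smooth pair, and then the bound $s(X,\Delta)<3N^{2n+3}$ will follow by constructing an explicit $\bQ$-factorial terminalization of $(X,\Delta)$ and controlling its summands $\rho(Y,D)$ and $d(Y,D)$ using the first bound together with the Picard- and Betti-number comparison results (Lemmas \ref{lem:compare rho} and \ref{lem: compare h^alg}).

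For $e_+(X,\Delta)$: since $b_i<1$ the pair is klt, so any exceptional $E$ with $a(E,X,\Delta)\le 0$ has $c_X(E)\subseteq\Supp(\Delta)$. Near each open stratum $B_J^\circ$ with $|J|=k$ the pair is toroidal, and I would show (by the standard reduction of an arbitrary divisorial valuation to its \emph{monomial retract}, which has no larger log discrepancy) that the divisorial valuations with generic center in $B_J^\circ$ satisfying $A_{X,\Delta}\le 1$ correspond to primitive vectors $(w_1,\dots,w_k)\in \bZ_{>0}^{k}$ with $\sum_{j\in J}w_j(1-b_j)\le 1$. The assumption $1-b_j\ge 1/N$ then forces $\sum w_j\le N$, giving at most $\binom{N}{k}\le N^k$ such vectors. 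Strata of codimension $\le 1$ contribute only non-exceptional divisors, so summing over the at most $N-1$ strata of codimension $\ge 2$ gives $e(X,\Delta)\le (N-1)N^n$; adding the at most $N$ components of $\Supp(\Delta)$ yields $e_+(X,\Delta)<N^{n+1}$.

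For $s(X,\Delta)$: let $\pi\colon Y\to X$ be the crepant extraction of all exceptional divisors $F$ with $a(F,X,\Delta)\le 0$, with boundary $D=\pi_*^{-1}\Delta+\sum_F(-a(F,X,\Delta))F$; by construction $(Y,D)$ is terminal and crepant over $(X,\Delta)$. After a small $\bQ$-factorialization, which extracts no new divisors, $(Y,D)$ becomes a permissible pair in the minimum \eqref{eq:s(X,Delta)}. Then $\rho(Y)\le \rho(X)+e(X,\Delta)<N+N^{n+1}$ by Lemma \ref{lem:compare rho}, and $h_{2n-4}^{\alg}(Y)<N+N^{n+1}$ by Lemma \ref{lem: compare h^alg}. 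For $\rho(\widetilde{B}_D)$, the support of $D$ has fewer than $N+N^{n+1}$ components; each is either the strict transform of a codimension-$1$ stratum of $(X,\Delta)$ (Picard number $\le N$ by hypothesis, augmented by at most $e(X,\Delta)$ from extractions meeting it) or one of the new exceptional divisors, which is a toroidal variety whose Picard number is controlled by the lower-dimensional strata it meets. Summing yields $\rho(\widetilde{B}_D)<2N^{2n+2}$. Finally, a monomial count analogous to the one for $e_+$, applied to the terminal pair $(Y,D)$, bounds $d(Y,D)<N^{n+1}$. Putting everything together gives $s(X,\Delta)<3N^{2n+3}$.

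The main obstacle I expect is the reduction from ``exceptional divisor with $a(E,X,\Delta)\le 0$'' (or, for $d$, with $a<1$ and not an echo) to a clean toroidal lattice-point count: one must verify that every such divisor on a log smooth klt pair with coefficients in $[0,1-1/N]$ is dominated by a monomial valuation of no smaller log discrepancy at the minimal stratum containing its center. A secondary delicate point is bounding the Picard number of each component of $\Supp(D)$, since a newly extracted exceptional divisor can itself carry a non-trivial Picard number; the key observation is that each such divisor arises from a toroidal blowup at a stratum of $(X,\Delta)$ and therefore inherits a compatible stratification whose strata have Picard numbers controlled by $N$, so one can bound its Picard number by essentially the same invariants that control $(X,\Delta)$.
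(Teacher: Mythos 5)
Your overall strategy is the same as the paper's: realize the terminalization by (weighted) blowups of strata, count the relevant divisors as lattice points via the condition $\sum_{j\in J}(1-b_j)m_j\le 1$ (resp.\ $<2$), and then bound $\rho$, $h^{\alg}_{2n-4}$ and $d$ of the terminalization. The count for $e_+(X,\Delta)$ is essentially the paper's argument (the paper invokes the fact that divisors with $a\le 0$ over a log smooth pair are weighted-blowup divisors, rather than re-deriving it via monomial retracts; for the $d$-count it cites \cite{Kol-flops}*{Corollary 3.2(iii)}). However, two steps in your treatment of $s(X,\Delta)$ do not work as written.

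First, you cannot apply Lemma \ref{lem: compare h^alg} to the terminalization $\pi\colon Y\to X$: that lemma requires $\dim\Ex(\pi)\le m=n-2$, whereas $\Ex(\pi)$ is a divisor of dimension $n-1$. The increase of $h^{\alg}_{2n-4}$ under a single blowup of a smooth stratum $Z$ is not $\le 1$ but up to $\rho(Z)+1$ (by the blowup formula $H^{2n-4}(\widetilde X)\cong H^{2n-4}(X)\oplus H^{2n-6}(Z)\oplus\cdots$, i.e.\ \cite{Voi-Hodge-book-I}*{Theorem 7.31}), and the Picard numbers of the strata themselves grow along the sequence of blowups. This is why the paper's bound for $h^{\alg}_{2n-4}(Y)$ is of size $N^{2n+3}$ rather than your claimed $N+N^{n+1}$; your intermediate estimate is unjustified and in general false. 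The correct route is the paper's: show each stratum blowup increases the Picard number of every stratum by at most $N$, deduce that all strata of the terminalization have Picard number at most $N^{n+2}$, and then sum $\rho_\ell+1$ over the $<N^{n+1}$ blowups.

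Second, your bound $d(Y,D)<N^{n+1}$ uses the wrong inequality: for $d$ one counts divisors with $a(E,Y,D)<1$, i.e.\ $\sum_{j\in J}(1-b_j)m_j<2$, which only forces $m_j<2N$; the count per stratum is therefore $(2N)^n$, giving $d(Y,D)\le 2^nN^{n+1}$ as in the paper, not $N^{n+1}$. With the corrected bounds the final inequality $s(X,\Delta)<3N^{2n+3}$ still holds, but only because the dominant term is the (correctly computed) $h^{\alg}_{2n-4}(Y)$ contribution, so both repairs are needed for the argument to close.
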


\begin{proof}
The result is clear when $N=1$: in this case $\Delta=0$ and $\rho(X)=h_{2n-4}^{\alg}(X)=1$ by assumption, which gives $e_+(X,\Delta)=d(X,\Delta)=0$ (as $(X,\Delta)$ is already terminal) and $s(X,\Delta) = 2$ by definition. Thus we may assume that $N\ge 2$. 

A $\bQ$-factorial terminalization $(Y,D)$ of $(X,\Delta)$ can be constructed by repeatedly blowing up the center of some prime exceptional divisor $E$ over $X$ with $a(E,X,\Delta)\le 0$ until all such divisors have been extracted. Note that the center of such a divisor $E$ is necessarily a stratum at each step (this follows from e.g. the calculations in \cite[Corollary 2.31]{KM98}). For each stratum $W\subseteq B_J$, every divisor $E$ with center $W$ and $a(E,X,\Delta)\le 0$ can also be obtained by a weighted blowup with some weights $m_j\in \bN_+$ along $B_j$. Since 
\[
a(E,X,\Delta)=-1+\sum_{j\in J} (1-b_j)m_j
\]
for the exceptional divisor $E$ of such a weighted blowup, and $b_i\le  1-\frac{1}{N}$ by assumption, we see that $m_j\le N$ and hence for each stratum $W$ there are no more than $N^n$ divisors $E$ with center $W$ and $a(E,X,\Delta)\le 0$. As there at no more than $N$ strata and one of them is $X$, 
we deduce that the $\bR$-divisor $D$ on the terminalization has less than $N^{n+1}$ components, and we only need to blowup less than $N^{n+1}$ times to obtain the terminalization $Y$. In particular, $e_+(X,\Delta)< N^{n+1}$.

At the initial stage (i.e. on $X$), the intersection of any two strata has at most $N$ connected components, and this number does not increase under the blowup process. This implies that after each stratum blowup, the Picard number of a stratum can increase at most $N$, hence after $\ell$ blowups, the Picard number of any stratum is at most $(\ell+1)N$. It follows that every stratum of $(Y,D)$ has Picard number at most $N^{n+2}$; in particular, $\rho(Y)\le N^{n+2}$, and if $\widetilde{B}_Y$ is the normalization of $B_Y:=\Supp(D)$, then
\[
\rho(\widetilde{B}_Y)\le N^{n+2} (N^{n+1}-1)
\]
as $D$ has less than $N^{n+1}$ components. On the other hand, by \cite[Theorem 7.31]{Voi-Hodge-book-I}, if we blowup a stratum of Picard number $\rho$, then $h_{2n-4}^\alg$ increases by at most $\rho+1$. By assumption, $h_{2n-4}^\alg(X)\le N$, hence
\[
h_{2n-4}^\alg(Y)\le h_{2n-4}^\alg(X) + (N+1) + (2N+1) + \dots + ((N^{n+1}-1)N+1) < N^{2n+3}.
\]
Putting the above together we see that
\[
\rho(Y,D) = \rho(Y) + \rho(\widetilde{B}_Y) + h_{2n-4}^\alg(Y) < N^{n+2} + N^{n+2} (N^{N+1}-1) + N^{2n+3} = 2N^{2n+3}.
\]

Next observe that by \cite[Corollary 3.2(iii)]{Kol-flops}, every divisor $E$ over $Y$ with $a(E,Y,D)=a(E,X,\Delta)<1$ that is not an echo of $(Y,D)$ is necessarily obtained by a weighted blowup along some stratum of $(X,\Delta)$, as the assumptions of \textit{loc. cit.} is satisfied after blowing up some strata. For each fixed stratum $W\subseteq B_J$, if $m_j$ ($j\in J$) are the weights along $B_j$, then 
\[
\sum_{j\in J} (1-b_j)m_j = 1+a(E,Y,D)<2.
\]
As $b_i\le 1-\frac{1}{N}$, we have $m_j<2N$, hence there are at most $(2N)^n$ divisors $E$ with $a(E,X,\Delta)<1$ that are centered at any given stratum. Since there are no more than $N$ strata, we see that $d(Y,D)\le 2^n N^{n+1}$ and (since $N\ge 2$)
\[
s(X,\Delta)\le \rho(Y,D)+d(Y,D)< 2N^{2n+3}+2^n N^{n+1}< 3N^{2n+3},
\]
which concludes the proof.
\end{proof}

\subsection{Terminal case} \label{ss:explicit terminal}
Next we prove an explicit bound on the number of steps in a special type of MMP on terminal pairs, which includes all terminal flips in dimension $3$ or $4$. 

We start with some general setup. For a  terminal pair $(X,\Delta)$ of dimension $n$, write $\Delta=\sum_{i=1}^k b_i B_i$ where $0<b_k<\dots<b_1<1$ and the $B_i$'s are reduced divisors without common components, and let $\widetilde{B}_i$ be the normalization of $B_i$. Set $b_0:=1$, $b_{k+1}:=0$, and let 
\begin{equation} \label{eq:coeff set S}
    S:=[0,1]\cap \sum_{i=0}^{k} b_i\mathbb N\subseteq \bR.
\end{equation}
In particular, $S=\{0,1\}$ if $\Delta=0$. We define a sequence of topological invariants as follows.
\begin{equation}
    \rho_i:=\rho_i(X,\Delta):=\begin{cases}
        \rho(X), & \mbox{if } i=0, \\
        \rho(\widetilde{B}_i), & \mbox{if } 1\le i\le k, \mbox{ and}\\
        h^{\mathrm{alg}}_{2n-4}(X), & \mbox{if } i=k+1.
    \end{cases}
\end{equation}
By definition, $\rho(X,\Delta)=\sum_{i=0}^{k+1}\rho_i (X,\Delta)$ (\textit{cf.} \eqref{eq:rho(X,Delta)}).
For each $i=1,\dots,k+1$, we also define the $i$-th (weighted) difficulty invariant by
\begin{equation}
    d_i:=d_i(X,\Delta):=\sum_{\eta\in S,\,\eta\ge b_i} \#\left\{ 
    \begin{tabular}{c}
         \mbox{Exceptional divisors $E$ over $X$ with}  \\
         \mbox{$a(E,X,\Delta)<1-\eta$ whose center $c_X(E)$ is not a} \\
         \mbox{codimension one point in the smooth locus of $\Supp(\Delta)$}
    \end{tabular}
    \right\}.
\end{equation}
Clearly $d_1\le d_2\le \cdots\le d_{k+1}\le |S|\cdot d(X,\Delta)$, where $d(X,\Delta)$ is defined in \eqref{eq:d(X,Delta)}. The importance of these invariants comes from the following result.

\begin{thm} \label{thm:invariant seq lexicographic}
Let $(X,\Delta)$ be a $\bQ$-factorial terminal pair of dimension $n\in\{3,4\}$. Then in an $(K_X+\Delta)$-MMP that does not contract any component of $\Delta$, the sequence $(\rho_0,d_1,\rho_1,d_2,\dots,d_{k+1},\rho_{k+1})$ is strictly decreasing in lexicographic order.
\end{thm}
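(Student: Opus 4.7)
The key reduction is that lexicographic strict decrease of a sequence follows automatically once (i) every coordinate weakly decreases under the step and (ii) at least one coordinate strictly decreases: any non-identity weakly monotone change produces a lex strict drop at the first coordinate where the two sequences differ. I would then do a case analysis on each step $\varphi\colon(X,\Delta)\dashrightarrow(X^+,\Delta^+)$ of the MMP.

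For a divisorial contraction with exceptional divisor $E$, the hypothesis that no component of $\Delta$ is contracted forces $E\not\subseteq\Supp(\Delta)$. By Lemma \ref{lem:compare rho}, $\rho_0(X^+,\Delta^+)=\rho_0(X,\Delta)-1$, which is the required strict drop in the leading coordinate; lex strict decrease is then immediate regardless of how the subsequent coordinates move, so the assumption that $d_i$ or $\rho_j$ might increase under a divisorial contraction is harmless.

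For a flip $\varphi$, $\rho_0$ is unchanged and all of the work is in the remaining coordinates. Weak monotonicity would be established coordinate by coordinate. For $d_i$, the negativity lemma gives $a(E,X,\Delta)\le a(E,X^+,\Delta^+)$ for every divisor $E$ over both varieties, so each set $\{E:a(E,X^+,\Delta^+)<1-\eta\}$ is contained in the corresponding set for $(X,\Delta)$, and the center condition on $c_X(E)$ is preserved because $\varphi$ is an isomorphism in codimension one. For $\rho_i$ with $1\le i\le k$, one uses a common resolution $p\colon Y\to X$, $q\colon Y\to X^+$ together with the induced birational morphisms $\widetilde{B}_{i,Y}\to\widetilde{B}_i$ and $\widetilde{B}_{i,Y}\to\widetilde{B}_i^+$; two applications of Lemma \ref{lem:compare rho} combined with the smallness of $\varphi^{-1}$ yield $\rho_i(X^+,\Delta^+)\le\rho_i(X,\Delta)$. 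Weak monotonicity of $\rho_{k+1}=h^{\alg}_{2n-4}$ follows from Lemma \ref{lem: compare h^alg}. The existence of a strict drop in at least one coordinate is the classical difficulty-function argument: Shokurov's argument \cite{Sho85} in dimension three produces an exceptional divisor over $X$ with discrepancy less than $1$ whose center lies in the flipping locus but not the flipped locus, giving a strict drop of some $d_i$ after varying $\eta\in S$; in dimension four, when echoes obstruct this conclusion, the argument of \cites{Fuj04,Fuj05,AHK07} replaces the drop in $d_i$ with a strict drop of $h^{\alg}_4$ coming from a $2$-cycle contracted by the flip whose class is not in the image from $X^+$.

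The main obstacle is the weak monotonicity of $\rho_i$ for $1\le i\le k$ in the flip case: one has to rule out that the flipping locus creates new Picard classes on $\widetilde{B}_i^+$ which do not descend from $\widetilde{B}_i$. The dimension hypothesis $n\le 4$ enters precisely here, since it bounds the flipping locus and its intersection with $B_i$, making the combinatorial bookkeeping tractable and bringing the argument in line with the special termination analysis of \cite{Fuj07a} and the weighted difficulty of \cite{AHK07}. Carrying out this verification uniformly across all $i$ in the weighted form used by the theorem is the main technical work.
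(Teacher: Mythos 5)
Your reduction to ``(i) every coordinate weakly decreases and (ii) at least one strictly decreases'' is not how this argument works, and claim (i) is false; this is the genuine gap. Under a flip, both the $d_i$ and the $\rho_i$ ($1\le i\le k$) can \emph{increase}. For $d_i$: the center condition in the definition of $d_i(X^+,\Delta^+)$ is imposed on $c_{X^+}(E)$, not on $c_X(E)$, and it is not preserved by a small map --- a divisor $E$ whose center on $X$ is a codimension one point of the smooth locus of $\Supp(\Delta)$ lying inside the flipping locus (hence an echo of $(X,\Delta)$, not counted in $d_i$) can have center on $X^+$ of higher codimension and thus be counted in $d_i(X^+,\Delta^+)$. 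For $\rho_i$: although $\varphi$ is small on $X$, its restriction $B_i\dashrightarrow B_i^+$ need not be small, since the flipped locus can contain divisors of $B_i^+$; e.g.\ a terminal threefold flip whose flipping curve is disjoint from $B$ but whose flipped curve lies in $B^+$ gives $\rho(\widetilde B^+)>\rho(\widetilde B)$ (while $d_1$ drops, which is why the \emph{lexicographic} order, with $d_1$ placed before $\rho_1$, still decreases). Indeed, the entire point of the paper's Theorem \ref{thm:terminal s_M non-increasing} and its inequalities \eqref{eq:difficulty in flip} and \eqref{eq:rho in flip} is to bound by how much $d_i'$ and $\rho_i'$ can \emph{exceed} the intermediate quantities $\bar d_i,\bar\rho_i$ when an earlier coordinate drops; that analysis would be vacuous if your claim (i) held.

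The correct structure of the argument (which the paper simply cites from \cite{Fuj04} and \cite{Fuj05}) is: identify, for each flip, the \emph{first} coordinate in the ordering that changes, and show it strictly decreases, allowing all subsequent coordinates to increase arbitrarily. Concretely, one shows that if the flip is an isomorphism in codimension one on every $B_j$ with $j\le i$ and no relevant discrepancy in the range governing $d_1,\dots,d_i$ changes, then the first change occurs further down the list, and one locates a strict drop there (using \cite{AHK07}*{Lemma 1.5}-type counting for the $d_i$'s and, in dimension four, the $h^{\alg}_4$ term to handle the case where only two-dimensional cycles are contracted). Your treatment of the divisorial contraction case is fine, and your identification of where the strict drop comes from in the flip case is in the right spirit, but the proof cannot be organized around componentwise weak monotonicity.
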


\begin{proof}
This follows from the proof \cites{Fuj04,Fuj05} that such MMP terminates.
\end{proof}

This suggests that if we define an ``$M$-adic difficulty'' by
\begin{align} \label{eq:s_M}
    \delta_M(X,\Delta) & :=\sum_{i=0}^{k+1} M^{2k+2-2i} \rho_i(X,\Delta) + \sum_{i=1}^{k+1} M^{2k+3-2i} d_i(X,\Delta) \\
    & = M^{2k+2} \rho_0 + M^{2k+1} d_1 + M^{2k} \rho_1 +\dots + M d_{k+1}+\rho_{k+1}, \nonumber
\end{align}
where $M$ is a positive integer, then it is reasonable to expect that $\delta_M(X,\Delta)$ is also strictly decreasing in the MMP in Theorem 6.7 when $M\gg 0$. The main result of this subsection is an explicit form of such a statement.

\begin{thm} \label{thm:terminal s_M non-increasing}
Let $(X,\Delta=\sum_{i=1}^k b_i B_i)$ be a $\bQ$-factorial terminal pair of dimension $n$ with $b_k<\dots<b_1$. Set $b_0=1$, $b_{k+1}=0$ and $S=[0,1]\cap \sum_{i=0}^k b_i\mathbb N$ as above. Then for any
\[
M\ge \left( 2+ \left\lceil \frac{1}{b_k}\right\rceil \cdot \left\lceil \frac{1}{1-b_1} \right\rceil \right) \cdot |S|,
\]
the invariant $\delta_M(X,\Delta)$ is non-increasing in any $(K_X+\Delta)$-MMP that does not contract any component of $\Delta$. Moreover, it strictly decreases in the MMP if $n\in\{3,4\}$.
\end{thm}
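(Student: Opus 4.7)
The plan is to analyze how each coordinate of the vector $(\rho_0, d_1, \rho_1, \ldots, d_{k+1}, \rho_{k+1})$ changes under a single step of the $(K_X+\Delta)$-MMP, and check that the weights in $\delta_M$ are chosen so large that the weighted sum is non-increasing. The underlying elementary principle is: for $M\ge 2$ the geometric series $1+M^{-1}+M^{-2}+\cdots+M^{-(2k+1)}$ is bounded by $M/(M-1)$, so if every coordinate changes by at most a universal constant $C$ in absolute value and if the first coordinate that strictly changes weakly decreases by an integer, then the contribution of the subsequent coordinates to $\delta_M$ is at most $CM^{w_0}/(M-1)$, which does not exceed $M^{w_0}$ as soon as $M\ge C+1$. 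The constant $C$ afforded by the hypothesis is essentially $|S|\cdot(1+\lceil 1/b_k\rceil\cdot\lceil 1/(1-b_1)\rceil)$, so the work of the proof reduces to verifying this per-coordinate bound and checking that the first coordinate to strictly change is always a weak decrease.

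For a divisorial contraction $f\colon(X,\Delta)\to(X',\Delta')$ contracting a prime divisor $E$ — which by hypothesis is not a component of $\Delta$ — Lemma~\ref{lem:compare rho} gives $\rho(X')=\rho(X)-1$, and since $f|_{B_i}\colon B_i\to f_*B_i$ is a proper birational morphism, the same lemma applied to the induced morphism of normalizations $\widetilde{B}_i\to\widetilde{f_*B_i}$ gives $\rho(\widetilde{f_*B_i})\le\rho(\widetilde{B}_i)$. The only new exceptional divisor over $X'$ is $E$, and since $a(E,X',\Delta')>0$ it contributes at most $|S\cap[b_i,1)|\le|S|$ to each $d_i(X',\Delta')$; discrepancies of all other exceptional divisors weakly increase by the negativity lemma, so their contributions to every $d_i$ are non-increasing. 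The change in $\rho_{k+1}=h^{\alg}_{2n-4}$ under $f$ is controlled by the number of $(n-2)$-dimensional algebraic cycles of $E$ collapsed by $f|_E$, which is bounded in terms of the stratum structure of $(X,\Delta)$.

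For a flip $\phi\colon(X,\Delta)\dashrightarrow(X^+,\Delta^+)$ we have $\rho(X^+)=\rho(X)$, discrepancies strictly increase along the flipping locus so each $d_i$ is non-increasing, and Lemma~\ref{lem: compare h^alg} applied to a common resolution controls the change of $h^{\alg}_{2n-4}$ via the number of $(n-2)$-dimensional components of $\Ex(\phi)$ and $\Ex(\phi^{-1})$. The main subtlety — and what I expect to be the hardest part of the proof — is controlling $\rho_i=\rho(\widetilde{B}_i)$, since $\phi|_{B_i}$ is only a birational map and Lemma~\ref{lem:compare rho} does not apply directly. To handle this, I would pass to a common log resolution, use the negativity lemma to identify which exceptional divisors are extracted versus contracted in the two projections, and bound the net change by the number of non-echo divisors $F$ over $X$ with $a(F,X,\Delta)<1$ whose center meets $B_i$. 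An estimate in the spirit of Lemma~\ref{lem:s(X,Delta) log smooth} — using that terminality of $(X,\Delta)$ bounds the weights of any weighted-blowup-type construction producing such an $F$ in terms of $\lceil 1/(1-b_1)\rceil$, and bounds the multiplicities of $F$ along components of $\Delta$ in terms of $\lceil 1/b_k\rceil$ — yields the required per-coordinate bound, the factor $|S|$ absorbing the $\eta$-sum in the definition of $d_i$. The strict decrease in dimensions $3$ and $4$ then follows by combining the resulting non-increase of $\delta_M$ with Theorem~\ref{thm:invariant seq lexicographic}, which guarantees that some coordinate of $(\rho_0,d_1,\ldots,\rho_{k+1})$ strictly drops at every MMP step.
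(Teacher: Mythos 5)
Your overall bookkeeping framework (weight the coordinates of $(\rho_0,d_1,\dots,d_{k+1},\rho_{k+1})$ by powers of $M$ and show the weighted sum cannot increase) matches the paper's, but the quantitative principle you invoke is not the one that actually holds, and this is a genuine gap rather than a detail. You assert that every coordinate changes by at most a universal constant $C$ in absolute value, so that the geometric series $\sum_j M^{-j}$ absorbs the increases once $M\ge C+1$. This is false: a single flip or divisorial contraction can contract arbitrarily many $(n-2)$-dimensional subvarieties of a component $B_j$, so $\rho_j=\rho(\widetilde B_j)$ can drop by an arbitrarily large amount in one step, and the resulting increases in the later coordinates $d_i$ and $\rho_i$ for $i>j$ are correspondingly unbounded --- no constant depending only on $b_1,b_k,|S|$ controls them. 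What is actually true, and what the paper proves, is an ``$M$-adic digit'' estimate: the increase of each coordinate is bounded by $(M-1)$ times the \emph{total drop of the preceding coordinates}, measured against intermediate quantities $\bar\rho_i$, $\bar d_i$ attached to the contraction $X\to Z$; the elementary Lemma \ref{lem:elementary inequality} then converts this into monotonicity of $\delta_M$. Your proposal contains no mechanism for feeding the (unbounded) drops of the higher coordinates into the bounds for the lower ones, so the argument does not close.

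A second, related error: in the flip case you claim each $d_i$ is non-increasing because discrepancies weakly increase. The discrepancy condition indeed only gets harder to satisfy, but $d_i$ also carries a condition on the \emph{center}: it counts only non-echo divisors, i.e.\ those whose center is not a codimension-one point of the smooth locus of $\Supp(\Delta)$. A divisor that is an echo on $X$ can fail to be an echo on $X'$ when the flip (or a divisorial contraction) contracts the codimension-one locus of $\Supp(\Delta)$ where it is centered, so $d'_i$ can strictly exceed $d_i$. Controlling exactly these echo-to-non-echo transitions --- showing their number is at most $\left\lceil \frac{1}{1-b_1}\right\rceil$ times the drop $\sum_{1\le j<i}(\rho_j-\bar\rho_j)$, and similarly tying the growth of $\rho(\widetilde B'_i)$ back to $d_i-\bar d_i$ with an extra factor $\left\lceil \frac{1}{b_k}\right\rceil$ from passing to normalizations --- is the heart of the paper's proof and is precisely where the factor $\left\lceil \frac{1}{b_k}\right\rceil\cdot\left\lceil \frac{1}{1-b_1}\right\rceil\cdot|S|$ in the hypothesis on $M$ comes from. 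Your appeal to Theorem \ref{thm:invariant seq lexicographic} for the strict decrease in dimensions $3$ and $4$ is correct and matches the paper.
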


\begin{cor} \label{cor:terminal explicit}
Let $(X,\Delta=\sum_{i=1}^k b_i B_i)$ be a $\bQ$-factorial terminal pair of dimension $n\in\{3,4\}$ with $b_k<\dots<b_1$. Let $b_0=1$, $b_{k+1}=0$ and $S=[0,1]\cap \sum_{i=0}^k b_i \bN$. Then every $(K_X+\Delta)$-MMP that does not contract any component of $\Delta$ terminates after at most
\[
\left( 2+ \left\lceil \frac{1}{b_k}\right\rceil \cdot \left\lceil \frac{1}{1-b_1} \right\rceil \right)^{2k+2}  |S|^{2k+2} (\rho(X,\Delta)+d(X,\Delta))
\]
steps, where $\rho(X,\Delta)$ and $d(X,\Delta)$ are defined in Definition \ref{defn:rho,d,s}.
\end{cor}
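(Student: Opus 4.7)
The plan is to apply Theorem~\ref{thm:terminal s_M non-increasing} with the smallest admissible value
$$M := \left(2+\left\lceil\frac{1}{b_k}\right\rceil\left\lceil\frac{1}{1-b_1}\right\rceil\right)|S|.$$
Since $n\in\{3,4\}$, that theorem yields that $\delta_M$ strictly decreases at each step of any $(K_X+\Delta)$-MMP that does not contract a component of $\Delta$. Each $\rho_i$ in \eqref{eq:s_M} is a non-negative integer (a Picard number or an algebraic Betti number), and each $d_i$ is a non-negative integer (a count of exceptional divisors), so $\delta_M$ takes values in $\mathbb N$. A strictly decreasing sequence of non-negative integers has length at most its initial term, and hence the MMP terminates after at most $\delta_M(X,\Delta)$ steps. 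It therefore suffices to estimate the initial value $\delta_M(X,\Delta)$ in terms of $\rho(X,\Delta)$ and $d(X,\Delta)$.

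I would split the defining expression \eqref{eq:s_M} into its $\rho$-part and $d$-part and bound each separately. For the $\rho$-part, the maximal $M$-weight appearing is $M^{2k+2}$, and by \eqref{eq:rho(X,Delta)} one has $\sum_{i=0}^{k+1}\rho_i=\rho(X,\Delta)$; so this part is at most $M^{2k+2}\rho(X,\Delta)$. For the $d$-part, I would use $d_i\le |S|\,d(X,\Delta)$ (an obvious consequence of the definition of $d_i$ as an $|S|$-fold sum) and sum the geometric series
$$M+M^{3}+\cdots+M^{2k+1}\le 2M^{2k+1},$$
which holds because our choice of $M$ clearly satisfies $M\ge 3$. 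This bounds the second sum by $2|S|\,M^{2k+1}\,d(X,\Delta)$, and because the same choice of $M$ also gives $M\ge 2|S|$, this is in turn at most $M^{2k+2}\,d(X,\Delta)$. Adding the two contributions and expanding $M^{2k+2}$ produces
$$\delta_M(X,\Delta)\le M^{2k+2}(\rho(X,\Delta)+d(X,\Delta))= \left(2+\left\lceil\tfrac{1}{b_k}\right\rceil\left\lceil\tfrac{1}{1-b_1}\right\rceil\right)^{2k+2}|S|^{2k+2}(\rho(X,\Delta)+d(X,\Delta)),$$
which is exactly the asserted bound.

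All the substantive content is packaged inside Theorem~\ref{thm:terminal s_M non-increasing}; the corollary itself is purely a book-keeping step that (i) turns strict decrease of an integer-valued invariant into a bound on the number of iterations, and (ii) estimates the initial value of that invariant via a dominant-weight plus geometric-series argument. There is no genuine obstacle, only the small sanity checks $M\ge 3$ and $M\ge 2|S|$, both of which are immediate from $\lceil 1/b_k\rceil\ge 1$ and $\lceil 1/(1-b_1)\rceil\ge 1$.
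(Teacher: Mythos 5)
Your proposal is correct and follows essentially the same route as the paper: apply Theorem \ref{thm:terminal s_M non-increasing} with $M=\left(2+\left\lceil \frac{1}{b_k}\right\rceil\cdot\left\lceil\frac{1}{1-b_1}\right\rceil\right)\cdot|S|$, use that $\delta_M\in\mathbb N$ strictly decreases to bound the number of steps by $\delta_M(X,\Delta)$, and then bound $\delta_M(X,\Delta)\le M^{2k+2}(\rho(X,\Delta)+d(X,\Delta))$ via $d_i\le|S|\cdot d(X,\Delta)$ (the paper packages this last estimate as Lemma \ref{lem:s_M vs rho+d}, using $|S|\le M-1$ in place of your geometric-series-plus-$M\ge 2|S|$ bookkeeping, but the arithmetic is equivalent).
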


The remaining part of this subsection is devoted to proving the two results above.

\begin{proof}[Proof of Theorem \ref{thm:terminal s_M non-increasing}]

By induction, it suffices to prove that $\delta_M(X,\Delta)\ge \delta_M(X',\Delta')$ for any step $\varphi\colon (X,\Delta)\dashrightarrow (X',\Delta')$ of the $(K_X+\Delta)$-MMP which does not contract any component of $\Delta$, and that the inequality is strict if $n\in\{3,4\}$. Let $\rho'_i:=\rho_i(X',\Delta')$ and $d'_i:=d_i(X',\Delta')$.

First assume that $\varphi$ is a flip. Let $f\colon X\to Z$ be the flipping contraction and $f'\colon X'\to Z$ the flip. Let $B'_i$ be the strict transform of $B_i$ on $X'$ and $\widetilde{B}'_i$ its normalization, and let $\bar{B}_i$ be the normalization of $f(B_i)=f'(B'_i)\subseteq Z$. For ease of notation we also denote the induced morphism $\widetilde{B}_i\to \bar{B}_i$ and $\widetilde{B}'_i\to \bar{B}_i$ by $f$ and $f'$.
\[
\begin{tikzcd}[column sep=small, row sep=small, outer sep=-1.5pt]
\widetilde{B}_i \arrow[r] & B_i \arrow[r,phantom,description,"\subseteq"] & X \arrow[rr,dashed,"\varphi"] \arrow[dr,"f"] & & X' \arrow[dl,"f'"']  \arrow[r,phantom,description,"\supseteq"] & B'_i & \widetilde{B}'_i \arrow[l] \\
& \bar{B}_i \arrow[r] & f(B_i) \arrow[r,phantom,description,"\subseteq"] & Z
\end{tikzcd}
\]
We have $\rho_0=\rho'_0$ since $\varphi$ is small. To compare the other invariants $\rho'_i,d'_i$ with $\rho_i,d_i$, we introduce two intermediate sequences as follows.
\begin{equation}
    \bar{\rho}_i := \begin{cases}
    \rho(\bar{B}_i), & \mbox{if } 1\le i\le k, \\
    h^{\mathrm{alg}}_{2n-4}(Z), & \mbox{if } i=k+1.
    \end{cases}
\end{equation}
\begin{equation}
    \bar{d}_i := \sum_{\eta\in S,\eta\ge b_i} \#\left\{ 
    \begin{tabular}{c}
         \mbox{Exceptional divisors $E$ over $X'$ with}  \\
         \mbox{$a(E,X',\Delta')<1-\eta$ whose center $c_X(E)$ on $X$ is not a} \\
         \mbox{codimension one point in the smooth locus of $\Supp(\Delta)$}
    \end{tabular}
    \right\}.
\end{equation}
Note that in the definition of $\bar{d}_i$, the discrepancy is computed with respect to $(X',\Delta')$ while the center is taken on $X$. It is clear that  $\rho_i\ge \bar{\rho}_i$ for all $i=1,\dots,k+1$. Since $a(E,X,\Delta)\le a(E,X',\Delta')$ for all prime divisors $E$ over $X$ by the negativity lemma, we also have $d_i\ge \bar{d}_i$ for all $i=1,\dots,k+1$. We claim that
\begin{equation} \label{eq:difficulty in flip}
    d'_i - \bar{d}_i \le (M-1) \sum_{1\le j<i} (\rho_j - \bar{\rho}_j),\, \mbox{ and }
\end{equation}
\begin{equation} \label{eq:rho in flip}
    \rho'_i - \bar{\rho}_i \le (M-1)\left( d_i - \bar{d}_i + \sum_{1\le j<i} (\rho_j - \bar{\rho}_j) \right).
\end{equation}
Roughly speaking, the claim says that the sequence $(\rho_0,d_1,\rho_1,d_2,\dots,d_{k+1},\rho_{k+1})$ behaves like the digits of $M$-adic numbers: each time a higher digit drops by $1$, the lower digits increase by at most $M-1$. Taking \eqref{eq:difficulty in flip} and \eqref{eq:rho in flip} for granted, Lemma \ref{lem:elementary inequality} then implies that $\delta_M(X,\Delta)\ge \delta_M(X',\Delta')$. Moreover, equality holds if and only if $d'_i=d_i$ for all $i=1,\dots,k+1$ and $\rho'_j=\rho_j$ for all $j=0,1,\dots,k+1$. This is not possible in dimension $3$ and $4$ by Theorem \ref{thm:invariant seq lexicographic}, thus $\delta_M(X,\Delta)> \delta_M(X',\Delta')$ when $n\in \{3,4\}$.

We now focus on the proof of \eqref{eq:difficulty in flip} and \eqref{eq:rho in flip}.

\begin{proof}[Proof of \eqref{eq:difficulty in flip}]
Let $E$ be an exceptional divisor that has more contributions to $d'_i$ than to $\bar{d}_i$. Let $\xi=c_X(E)$ (resp. $\xi'=c_{X'}(E)$) be its center on $X$ (resp. $X'$). Then $\xi'$ is not a codimension one point in the smooth locus of $\Supp(\Delta')$ and $a(E,X',\Delta')<1-b_i$, otherwise $E$ has no contribution to $d'_i$. On the other hand, since $a(E,X,\Delta)\le a(E,X',\Delta')$, we see that $\xi$ must be a codimension one point in the smooth locus of $\Supp(\Delta)$, otherwise it cannot have more contributions to $d'_i$. Thus $\xi$ is contained in the exceptional locus of $f$. We also have $a(E,X,\Delta)\le a(E,X',\Delta')<1-b_i$, hence $\xi \in B_j$ for some $j<i$ by e.g. \cite{AHK07}*{Lemma 1.5}. By Lemma \ref{lem:compare rho}, there are at most $\rho_j - \bar{\rho}_j$ components of dimension $n-2$ in the exceptional locus of $f|_{B_j}$, and by \cite{AHK07}*{Lemma 1.5}, over the generic point of every such component that is contained in the smooth locus of $B_j$ there are at most $\left\lceil \frac{1}{1-b_j} \right\rceil\le\left\lceil \frac{1}{1-b_1} \right\rceil$ divisors $E$ with $a(E,X,\Delta)<1$. It follows that there are at most 
\[
\left\lceil \frac{1}{1-b_1} \right\rceil\cdot \sum_{1\le j<i} (\rho_j - \bar{\rho}_j)
\]
exceptional divisors $E$ that contribute positively to $d'_i - \bar{d}_i$. Note that every such divisor $E$ is counted at most $|S|$ times in $d'_i$. Thus
\[
d'_i - \bar{d}_i \le |S|\cdot \left\lceil \frac{1}{1-b_1} \right\rceil\cdot \sum_{1\le j<i} (\rho_j - \bar{\rho}_j) \le (M-1) \sum_{1\le j<i} (\rho_j - \bar{\rho}_j),
\]
which gives \eqref{eq:difficulty in flip}.
\end{proof}

\begin{proof}[Proof of \eqref{eq:rho in flip}]
For notational convenience, set $\widetilde{B}'_{k+1} = B'_{k+1} = X'$. By Lemmas \ref{lem:compare rho} and \ref{lem: compare h^alg}, we know that $\rho'_i - \bar{\rho}_i$ is at most the number of irreducible components of dimension $n-2$ in the exceptional locus of $f'|_{\widetilde{B}'_i}$. We first count the number of irreducible components of dimension $n-2$ in the exceptional locus of $f'|_{B'_i}$ (that is, before passing to the normalization). Let $W$ be one such component. Note that $(X',\Delta')$ is terminal, hence $X'$ is smooth at the generic point of $W$. Let $E$ be the exceptional divisor obtained by blowing up $W$ near the generic point of $W$ on $X'$. Then by \cite{Fuj04}*{Lemma 2.2}, we have
\begin{equation} \label{eq:a(E)<1-b}
    a(E,X,\Delta)<a(E,X',\Delta')=1-\eta
\end{equation}
for some $b_i\le \eta\in S$.

Let $\xi=c_X(E)$ be the center of $E$ on $X$. Then $\xi\in \Ex(f)$ since $c_{X'}(E)\in W\subseteq  \Ex(f')$. If $\xi$ is not a codimension one point in the smooth locus of $\Supp(\Delta)$, then \eqref{eq:a(E)<1-b} implies that $E$ has more contributions to $d_i$ than to $\bar{d}_i$, and there are at most $d_i-\bar{d}_i$ such divisors. If $\xi$ is a codimension one point in the smooth locus of $\Supp(\Delta)$, then by \eqref{eq:a(E)<1-b} and \cite{AHK07}*{Lemma 1.5} we see that $\xi\in  B_j$ for some $1\le j<i$. Since $\xi\in \Ex(f)$, it is also the generic point of a component $V$ of dimension $n-2$ in the exceptional locus of $f$. Combined with Lemma \ref{lem:compare rho}, we deduce that there are at most $\sum_{1\le j<i} (\rho_j - \bar{\rho}_j)$ such components $V$. Moreover, once $V$ is fixed, there are at most $\left\lceil \frac{1}{1-b_j} \right\rceil\le\left \lceil \frac{1}{1-b_1} \right\rceil$ divisors $E$ centered at the generic point of $V$ such that $a(E,X,\Delta)<1$. Putting these together, and since $W$ is uniquely determined by $E$ as its center, we see that there are at most 
\[
d_i-\bar{d}_i +\left\lceil \frac{1}{1-b_1} \right\rceil \sum_{1\le j<i} (\rho_j - \bar{\rho}_j)
\]
components $W$ of dimension $n-2$ in the exceptional locus of $f'|_{B'_i}$. When $i=k+1$ this already gives an upper bound of $\rho'_i - \bar{\rho}_i$ by Lemma \ref{lem: compare h^alg}. If $1\le i\le k$, then since $(X',\Delta')$ is terminal, we have $\mult_W \Delta' < 1$ and hence $\mult_W B'_i < \frac{1}{b_i}$. It follows that the preimage of $W$ in the normalization $\widetilde{B}'_i$ has at most $\left\lceil \frac{1}{b_i} \right\rceil \le \left\lceil \frac{1}{b_k} \right\rceil$ components. In other words, the contribution to $\rho'_i - \bar{\rho}_i$ by each component $W$ is at most $\left\lceil \frac{1}{b_k} \right\rceil$. Therefore,
\begin{align*}
\rho'_i - \bar{\rho}_i & \le \left\lceil \frac{1}{b_k} \right\rceil\left( d_i - \bar{d}_i + \left\lceil \frac{1}{1-b_1} \right\rceil\sum_{1\le j<i} (\rho_j - \bar{\rho}_j) \right) \\
& \le (M-1)\left( d_i - \bar{d}_i + \sum_{1\le j<i} (\rho_j - \bar{\rho}_j) \right)
\end{align*}
as desired.
\end{proof}

As discussed before, this concludes the proof of the theorem when $\varphi$ is a flip.

Next assume that $\varphi\colon (X,\Delta)\to (X',\Delta')$ is a divisorial contraction. Then clearly $\rho'_0=\rho_0-1$ and $\rho'_i\le \rho_i$ for all $i\ge 1$. Since none of the components of $\Delta$ is contracted, the pair $(X',\Delta')$ remains terminal. Set $\bar{\rho}_i := \rho'_i$ and $\bar{d}_i:=d_i$. Similar to the previous case, we claim that
\begin{equation}
    d'_i - \bar{d}_i \le (M-1)\sum_{0\le j<i} (\rho_j - \bar{\rho}_j),
\end{equation}
which implies, again by Theorem \ref{thm:invariant seq lexicographic} and Lemma \ref{lem:elementary inequality}, that $\delta_M(X,\Delta)\ge \delta_M(X',\Delta')$, with strict inequality when $n\in \{3,4\}$.

To see this claim, let $F$ be the (unique) exceptional divisor of $\varphi$ and let $E$ be an exceptional divisor over $X'$ that contributes more to $d'_i$ than to $\bar{d}_i  = d_i$. Let $\xi=c_X(E)$ (resp. $\xi'=c_{X'}(E)$) be the center of $E$ on $X$ (resp. $X'$). Then $\xi'$ is not a codimension one point of the smooth locus of $\Supp(\Delta')$, and $a(E,X',\Delta')<1-b_i$. Since $a(E,X,\Delta)\le a(E,X',\Delta')$, in order for the divisor $E$ to contribute less to $d_i$, there are only two possibilities: either $E=F$ (so it is not exceptional over $X$), or $E$ is exceptional over $X$ but its center $\xi$ is a codimension one point of the smooth locus of $\Supp(\Delta)$. In both cases we necessarily have $\xi\in \Ex(\varphi)= F$. Let us analyze the latter situation more carefully.

Since $a(E,X,\Delta)\le a(E,X',\Delta')<1-b_i$, by \cite{AHK07}*{Lemma 1.5} we know that $\xi\in B_j$ for some $1\le j<i$. Note that $\xi'=\varphi(\xi)$. If $\xi'$ is a point of codimension at least $3$ in $X'$, then $W=\overline{\{\xi\}}$ is a component of dimension $n-2$ in the exceptional locus of $\varphi|_{B_j}$. There are at most $\rho_j - \rho'_j = \rho_j - \bar{\rho}_j$ such components by Lemma \ref{lem:compare rho}. Once $W$ is fixed, by \cite{AHK07}*{Lemma 1.5}, there are at most $\left\lceil \frac{1}{1-b_j} \right\rceil\le \left\lceil \frac{1}{1-b_1} \right\rceil$ exceptional divisors $E$ with center $W$ and $a(E,X,\Delta)<1$. As $j$ varies these give at most
\[
\left\lceil \frac{1}{1-b_1}\right\rceil \sum_{1\le j<i} (\rho_j - \bar{\rho}_j)
\]
candidates of the divisor $E$ in total. If $\xi'$ has codimension $2$ in $X'$, then $\xi'$ is the generic point of $\varphi(F)$ (because $\xi\in F$ and $\varphi(F)$ has codimension at least $2$ in $X$) and $\varphi|_{B_j}$ is finite around $\xi$. As $(X',\Delta')$ is terminal, we get $\mult_{\xi'} \Delta' <1$ and thus $\mult_{\xi'} (B'_1+\dots+B'_j)< \frac{1}{b_j}\le \frac{1}{b_k}$. It follows that the preimage of $\xi'$ in the normalization $\widetilde{B}_1\cup \dots\cup \widetilde{B}_j$ has at most $\left\lceil \frac{1}{b_k} \right\rceil$ components, giving rise to at most $\left\lceil \frac{1}{b_k} \right\rceil$ possibilities of $\xi$. For each such choice of $\xi$, there are at most $\left\lceil \frac{1}{1-b_j} \right\rceil\le\left\lceil \frac{1}{1-b_1} \right\rceil$ divisors $E$ with center $\xi$ such that $a(E,X,\Delta)<1$ (by \cite{AHK07}*{Lemma 1.5} as before). Putting these together, we conclude that there are at most
\[
\left\lceil \frac{1}{1-b_1} \right\rceil \cdot \left\lceil \frac{1}{b_k} \right\rceil + \left\lceil \frac{1}{1-b_1} \right\rceil \sum_{1\le j<i} (\rho_j - \bar{\rho}_j) = \left\lceil \frac{1}{1-b_1} \right\rceil \cdot \left( \left\lceil \frac{1}{b_k} \right\rceil + \sum_{1\le j<i} (\rho_j - \bar{\rho}_j) \right)
\]
exceptional divisors over $X$ that contribute more to $d'_i$ than to $d_i$. Recall that $\rho_0 - \rho'_0 = 1$ and there is also the option that $E=F$. As every divisor $E$ is counted at most $|S|$ times in $d'_i$, we finally deduce that
\[
d'_i - \bar{d}_i \le |S| \cdot \left( 1 + \left\lceil \frac{1}{1-b_1} \right\rceil \cdot \left( \left\lceil \frac{1}{b_k} \right\rceil + \sum_{1\le j<i} (\rho_j - \bar{\rho}_j) \right) \right) \le (M-1) \sum_{0\le j<i} (\rho_j - \bar{\rho}_j).
\]
This proves the claim and hence of the theorem as well.
\end{proof}

The following elementary lemma has been used in the proof above.

\begin{lem} \label{lem:elementary inequality}
Let $a_i,b_i,c_i\in \bR\,\, (i=0,\dots,m)$ and $M>1$ be such that $a_i\ge b_i$ and $c_i-b_i\le (M-1)\sum_{0\le j<i} (a_j-b_j)$ for all $i$. Then
\[
\sum_{i=0}^m M^{-i} c_i \le \sum_{i=0}^m M^{-i} a_i
\]
with equality if and only if $a_i=b_i=c_i$ for all $i$.
\end{lem}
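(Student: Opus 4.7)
The plan is to reduce everything to a single elementary inequality after writing $x_j := a_j - b_j \ge 0$ and swapping the order of summation. First I would observe that, after subtracting $\sum M^{-i} b_i$ from both sides, the desired inequality is equivalent to
\[
\sum_{i=0}^{m} M^{-i}(c_i - b_i) \;\le\; \sum_{i=0}^{m} M^{-i}\,x_i.
\]
Using the hypothesis $c_i - b_i \le (M-1)\sum_{0\le j<i} x_j$ term-by-term (with the positive weight $M^{-i}$), it suffices to prove
\[
(M-1)\sum_{i=0}^{m} M^{-i} \sum_{0\le j<i} x_j \;\le\; \sum_{i=0}^{m} M^{-i}\,x_i.
\]

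Next I would interchange the order of summation on the left-hand side and evaluate the resulting geometric series:
\[
(M-1)\sum_{j=0}^{m-1} x_j \sum_{i=j+1}^{m} M^{-i}
= (M-1)\sum_{j=0}^{m-1} x_j \cdot \frac{M^{-j}-M^{-m}}{M-1}
= \sum_{j=0}^{m-1} \bigl(M^{-j}-M^{-m}\bigr) x_j.
\]
Rewriting the right-hand side as $\sum_{j=0}^{m-1} M^{-j}x_j + M^{-m}x_m$ and canceling, the desired inequality collapses to
\[
-M^{-m}\sum_{j=0}^{m-1} x_j \;\le\; M^{-m}\,x_m,
\]
which is trivial since all $x_j \ge 0$. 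This proves the main inequality.

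Finally, to pin down the equality case, I would trace back when each step above is an equality. Equality in the last displayed inequality forces $\sum_{j=0}^{m} x_j = 0$, and hence (as $x_j\ge 0$) $x_j=0$ for all $j$, i.e.\ $a_j = b_j$. Equality when applying the hypothesis then forces $c_i - b_i = (M-1)\sum_{j<i} x_j = 0$ for every $i$, giving $c_i = b_i = a_i$. The converse is immediate. The whole argument is elementary; the only point requiring mild care is keeping the equality conditions synchronized across the reduction, which is why I would make each step an equivalence rather than a one-way bound.
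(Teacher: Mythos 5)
Your proof is correct and follows essentially the same route as the paper's: normalize via $x_i=a_i-b_i$, apply the hypothesis with the weights $M^{-i}$, interchange the order of summation, and evaluate the geometric series. The only (cosmetic) difference is that you compute the finite geometric sum exactly, where the paper bounds it by the infinite series $\sum_{i\ge j}M^{-i}$; your equality analysis is likewise sound.
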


In the applications above, the sequence $a_i$ (resp. $b_i$, resp. $c_i$) represents the sequence $(\rho_0,d_1,\dots,d_{k+1},\rho_{k+1})$ (resp. $(\bar{\rho}_0,\bar{d}_1,\dots,\bar{d}_{k+1},\bar{\rho}_{k+1})$, resp. $(\rho'_0,d'_1,\dots,d'_{k+1},\rho'_{k+1})$).

\begin{proof}
We may assume that $b_i=0$ by replacing $a_i$ (resp. $c_i$) with $a_i-b_i$ (resp. $c_i-b_i$). The assumption then becomes $a_i\ge 0$ and $c_i\le (M-1)\sum_{0\le j<i} a_j$. It follows that $M^{-i}c_i \le (1-\frac{1}{M}) \sum_{0\le j<i} M^{1-i}a_j$, hence
\begin{align*}
  \sum_{i=0}^m M^{-i}c_i &\le \left(1-\frac{1}{M}\right) \sum_{i=0}^m \sum_{0\le j<i} M^{1-i}a_j \le \left(1-\frac{1}{M}\right) \sum_{j=0}^{m-1} \sum_{i=j}^{+\infty} M^{-i} a_j\\
  &= \sum_{j=0}^{m-1} M^{-j} a_j\le \sum_{j=0}^{m} M^{-j} a_j.  
\end{align*}
The second and the last equality hold if and only if $a_i=0$ for all $i$. If this is the case, then the first equality holds if and only if $c_i=(M-1)\sum_{0\le j<i} a_j = 0$ for all $i$.
\end{proof}

To prove Corollary \ref{cor:terminal explicit}, we need another elementary statement.

\begin{lem} \label{lem:s_M vs rho+d}
We have $\rho(X,\Delta)+d(X,\Delta)\le \delta_M(X,\Delta)$. Assume that $M\ge |S|+1$. Then we also have 
\begin{equation} 
    \delta_M(X,\Delta)\le M^{2k+2} (\rho(X,\Delta) + d(X,\Delta)).
\end{equation}
\end{lem}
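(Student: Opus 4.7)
The overall strategy is to handle the two inequalities separately by elementary coefficient-wise estimates of the explicit formula
\[
\delta_M(X,\Delta)=\sum_{i=0}^{k+1}M^{2k+2-2i}\rho_i(X,\Delta)+\sum_{i=1}^{k+1}M^{2k+3-2i}d_i(X,\Delta).
\]
No additional geometric input beyond the definitions in Subsection~\ref{ss:explicit invariant} is required; each direction is a short calculation with the exponents.

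For the lower bound $\rho(X,\Delta)+d(X,\Delta)\le\delta_M(X,\Delta)$, I would first observe that for the ranges $0\le i\le k+1$ and $1\le i\le k+1$ respectively, all exponents $2k+2-2i\ge 0$ and $2k+3-2i\ge 1$, so every coefficient appearing in $\delta_M$ is at least $1$ (as $M\ge 1$). Since $\rho_i,d_i\ge 0$, this gives
\[
\delta_M(X,\Delta)\ge\sum_{i=0}^{k+1}\rho_i+\sum_{i=1}^{k+1}d_i\ge\rho(X,\Delta)+d_{k+1}.
\]
It then suffices to show $d_{k+1}\ge d(X,\Delta)$. This is immediate upon unwinding the definitions: since $b_{k+1}=0\in S$, the $\eta=0$ summand in $d_{k+1}$ counts exactly the exceptional divisors $E$ over $X$ with $a(E,X,\Delta)<1$ whose center is not a codimension one point in the smooth locus of $\Supp(\Delta)$, and this is precisely the definition of the non-echoes enumerated by $d(X,\Delta)$.

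For the upper bound under the hypothesis $M\ge|S|+1$, I would estimate the $\rho$-part and the $d$-part of $\delta_M(X,\Delta)$ separately. Using $\rho_i\ge 0$ and the fact that the coefficient $M^{2k+2-2i}$ attains its maximum $M^{2k+2}$ at $i=0$, one obtains at once
\[
\sum_{i=0}^{k+1}M^{2k+2-2i}\rho_i\le M^{2k+2}\sum_{i=0}^{k+1}\rho_i=M^{2k+2}\rho(X,\Delta).
\]
For the $d$-part, I would apply the bound $d_i\le|S|\cdot d(X,\Delta)$ stated after the definition of $d_i$, together with the geometric sum
\[
\sum_{i=1}^{k+1}M^{2k+3-2i}=M+M^3+\cdots+M^{2k+1}=\frac{M(M^{2k+2}-1)}{M^2-1}.
\]
The target estimate $\sum_{i=1}^{k+1}M^{2k+3-2i}d_i\le M^{2k+2}d(X,\Delta)$ then reduces, after clearing denominators, to the scalar inequality $|S|M\le M^2-1$, i.e.\ $M(M-|S|)\ge 1$, which is clear whenever $M\ge|S|+1$. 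Adding the two estimates yields the claimed upper bound. Neither direction presents a genuine obstacle; the one place that requires brief attention is matching the $\eta=0$ term in $d_{k+1}$ with $d(X,\Delta)$, which is precisely the reason the weighted difficulty $d_i$ was designed to sum over all $\eta\ge b_i$ rather than only over $\eta=b_i$.
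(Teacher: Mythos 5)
Your proof is correct and follows essentially the same elementary coefficient-wise estimate as the paper: the paper dismisses the lower bound as ``clear from the definition'' (your identification of $d(X,\Delta)$ with the $\eta=0$ summand of $d_{k+1}$ is the right way to make this precise), and for the upper bound the paper likewise uses $d_i\le |S|\cdot d(X,\Delta)$ together with the geometric sum $M+M^3+\cdots+M^{2k+1}$, merely substituting $|S|\le M-1$ where you instead reduce to $|S|M\le M^2-1$. Both variants are valid under the hypothesis $M\ge |S|+1$.
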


\begin{proof}
The first inequality is clear from the definition \eqref{eq:s_M} of $\delta_M(X,\Delta)$. If $M\ge |S|+1$, then as $d_i\le |S|\cdot d(X.\Delta)$ for all $i$, from \eqref{eq:s_M} we also have
\begin{align*}
    \delta_M(X,\Delta) & \le M^{2k+2}\rho(X,\Delta) + (M^{2k+1}+M^{2k-1}+\dots+M)\cdot |S|\cdot d(X,\Delta) \\
    & \le M^{2k+2}\rho(X,\Delta) + (M^{2k+1}+M^{2k-1}+\dots+M)\cdot (M-1)\cdot d(X,\Delta) \\
    & \le M^{2k+2} (\rho(X,\Delta) + d(X,\Delta)),
\end{align*}
which gives the second inequality.
\end{proof}

\begin{proof}[Proof of Corollary \ref{cor:terminal explicit}]
Since $\delta_M(X,\Delta)\in \bN$ by definition, we deduce from Theorem \ref{thm:terminal s_M non-increasing} that the MMP terminates in at most $\delta_M(X,\Delta)$ steps, where
\[
M := \left( 2+ \left\lceil \frac{1}{b_k}\right\rceil \cdot \left\lceil \frac{1}{1-b_1} \right\rceil \right) \cdot |S|.
\]
The result then follows from Lemma \ref{lem:s_M vs rho+d}.
\end{proof}

\subsection{MMP with discrete discrepancies}

Our next goal is to give explicit bounds for klt MMP in dimension $3$, or in dimension $4$ when one of $K_X+\Delta$ or $\Delta$ is big. A common feature in these cases is that the log discrepancies belong to some fixed finite set during the MMP (\textit{cf.} \cite[Section 3]{HQZ25}). To give a more uniform treatment, we first prove an explicit termination statement by fixing this set. See Definition \ref{defn:rho,d,s} for the definition of $s(X,\Delta)$ and $e(X,\Delta)$ that appear in the statement below.


\begin{lem} \label{lem:fix discrep explicit}
Let $S\subseteq [0,1]$ be a finite set such that $0,1\in S$ and for any $a,b\in S$ with $a+b\le 1$, we have $a+b\in S$. 
Let $(X,\Delta)$ be a klt pair of dimension $n\in \{3,4\}$. Let
\begin{equation} \label{eq:MMP discrete discrep}
\begin{tikzcd}
(X,\Delta)=:(X_0,\Delta_0) \arrow[r,dashed,"\varphi_0"] & (X_1,\Delta_1) \arrow[r,dashed,"\varphi_1"] & \cdots \arrow[r,dashed] & (X_i,\Delta_i) \arrow[r,dashed,"\varphi_i"] & \cdots 
\end{tikzcd}
\end{equation}
be an $(K_X+\Delta)$-MMP such that for any divisor $E$ over $X$ and any $i$ with $a(E,X_i,\Delta_i)<0$, we have $-a(E,X_i,\Delta_i)\in S$. 
Then this MMP terminates after at most
\[
M^{2(k+1)^2 |S|} s(X,\Delta)
\]
steps, where $k=e_+(X,\Delta)$ and $M=\left(2+\left\lceil \frac{1}{\min S\setminus\{0\}}\right\rceil\cdot \left\lceil \frac{1}{1-\max S\setminus\{1\}} \right\rceil\right)\cdot |S|$.
\end{lem}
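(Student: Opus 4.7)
The plan is to lift the $(K_X+\Delta)$-MMP to a $\mathbb Q$-factorial terminalization realizing $s(X,\Delta)$ and bound the length of the lifted MMP via the terminal case results of Subsection \ref{ss:explicit terminal}. Choose $\pi\colon(Y,D)\to(X,\Delta)$ with $\rho(Y,D)+d(Y,D)=s(X,\Delta)$ and apply Lemma \ref{lem:lift MMP terminal} to lift the MMP to
\[
(Y,D)=(Y_0,D_0)\dashrightarrow(Y_1,D_1)\dashrightarrow\cdots,
\]
where each $\psi_i$ is a sequence of $(K_{Y_i}+B_i)$-MMP steps not contracting any component of $B_i$, with $B_i\le D_i$ defined by \eqref{eq:B_i} and $D_{i+1}=(\psi_i)_*B_i$. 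The discrete discrepancy hypothesis forces $\Coef(B_i)\subseteq S$, and the identity $D_{i+1}=(\psi_i)_*B_i$ combined with the non-contraction property implies that the set of components of $D_i$ (equivalently of $B_{i-1}$) can only shrink under strict transforms; in particular each $B_i$ has at most $k$ components, and the coefficient of any fixed component in $B_i$ is non-increasing in $i$ with values in $S$.

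Next, partition the lifted MMP into chunks on which the coefficient vector in $S^k$ (tracked through strict transforms) is constant; since the vector is componentwise non-increasing and each coordinate takes at most $|S|$ values, the number of chunks is $K\le k(|S|-1)+1$. Within each chunk, the $\psi_i$'s compose to a single $(K_{Y_{i_0}}+B_{i_0})$-MMP, and by Theorem \ref{thm:terminal s_M non-increasing} the invariant $\delta_M(\cdot,B_{i_0})$ strictly decreases at every step, so the chunk length is at most $\delta_M(Y_{i_0},B_{i_0})$. At each chunk transition, the boundary on $Y_{i+1}$ changes from $D_{i+1}=(\psi_i)_*B_i$ to $B_{i+1}\le D_{i+1}$; combining Lemma \ref{lem:s_M vs rho+d} with the sub-boundary inequality $\rho(Y,B)+d(Y,B)\le\rho(Y,D)+d(Y,D)$ (discussed below) yields
\[
\delta_M(Y_{i+1},B_{i+1})\le M^{2(k+1)}\delta_M(Y_{i+1},D_{i+1})\le M^{2(k+1)}\delta_M(Y_{i_0},B_{i_0}),
\]
so $\delta_M$ grows by at most a factor $M^{2(k+1)}$ per chunk transition. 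Starting from $\delta_M(Y_0,B_0)\le M^{2(k+1)}s(X,\Delta)$ and summing the chunk lengths over $K$ chunks gives a total of at most $K\cdot M^{2(k+1)K}s(X,\Delta)$ $Y$-side MMP steps. The elementary estimates $K\le(k+1)|S|$ and $K\le M^{2(k+1)(|S|-1)}$ (valid since $M\ge 4$ and $|S|\ge 2$) absorb the prefactor $K$ into the exponent and bound this by $M^{2(k+1)^2|S|}s(X,\Delta)$. The same bound controls the $(K_X+\Delta)$-MMP since each of its steps either induces at least one $Y$-side MMP step, or corresponds to a change of ample model within a single chunk, which one bounds analogously.

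The main technical point is the sub-boundary monotonicity $\rho(Y,B)+d(Y,B)\le\rho(Y,D)+d(Y,D)$ for a $\mathbb Q$-factorial terminal pair $(Y,D)$ and any $B\le D$. The $\rho$-part follows from $\Supp B\subseteq\Supp D$ and additivity of Picard numbers under disjoint unions of normalizations. For the $d$-part, $\{a(E,Y,B)<1\}\subseteq\{a(E,Y,D)<1\}$ is automatic since $B\le D$; the delicate case is when an exceptional divisor $E$ counted in $d(Y,B)$ is an echo of $(Y,D)$ but not of $(Y,B)$. A case analysis forces the center of $E$ to lie generically on a component $F\subseteq\Supp D\setminus\Supp B$, and a weighted-blowup computation at a smooth codimension-two center then shows $a(E,Y,B)\ge 1$, contradicting $E\in d(Y,B)$. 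Making this local analysis rigorous in dimensions three and four (accounting for possible singularities of $Y$ itself, which need not be terminal even though the pair is) is the technical heart of the argument.
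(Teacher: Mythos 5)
Your proposal follows essentially the same route as the paper's proof: lift the MMP to a $\bQ$-factorial terminalization via Lemma \ref{lem:lift MMP terminal}, control each terminal segment with $\delta_M$ and Theorem \ref{thm:terminal s_M non-increasing}, and bound the number of times the terminal boundary can drop by roughly $k\cdot|S|$; your ``chunk'' decomposition is just a repackaging of the paper's rescaled potential $\tilde\delta_i=M^{(2k+2)\ell_i}\delta_i$. The sub-boundary monotonicity $\rho(Y,B)+d(Y,B)\le\rho(Y,D)+d(Y,D)$ that you flag as the technical heart is exactly the paper's Lemma \ref{lem:drop coeff}, and your one stated worry there is unfounded: if $(Y,D)$ is a terminal pair with $D\ge 0$, then $Y$ itself is terminal (discrepancies only increase when the boundary is removed), hence smooth in codimension two, which is precisely what makes the echo/codimension-two blowup computation go through in any dimension.
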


Before proving this we make an elementary observation.

\begin{lem} \label{lem:drop coeff}
Let $(X,\Delta)$ be a terminal pair and let $0\le D\le \Delta$ be an $\bR$-divisor such that $K_X+D$ is $\bR$-Cartier. Then $\rho(X,D)\le \rho(X,\Delta)$ and $d(X,D)\le d(X,\Delta)$.
\end{lem}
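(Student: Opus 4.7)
For the first inequality $\rho(X,D)\le \rho(X,\Delta)$, the plan is to compare the three summands in \eqref{eq:rho(X,Delta)} termwise. The topological contributions $\rho(X)$ and $h^{\mathrm{alg}}_{2n-4}(X)$ depend only on the ambient variety and are identical for the two pairs. For the middle term, since $0\le D\le \Delta$ forces $\Supp(D)\subseteq \Supp(\Delta)$, the irreducible components of $\Supp(D)$ form a subset of those of $\Supp(\Delta)$, and the normalization $\widetilde{B}_D$ is a disjoint union of a sub-collection of the connected components of $\widetilde{B}_\Delta$; hence $\rho(\widetilde{B}_D)\le \rho(\widetilde{B}_\Delta)$ at once.

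For the inequality $d(X,D)\le d(X,\Delta)$, I would show that every exceptional prime divisor $E$ over $X$ contributing to $d(X,D)$ also contributes to $d(X,\Delta)$. Since $\Delta-D\ge 0$, we have $a(E,X,\Delta)\le a(E,X,D)$, so $a(E,X,D)<1$ immediately yields $a(E,X,\Delta)<1$. The delicate point is to verify that $E$ is not an echo of $(X,\Delta)$. Arguing by contradiction, I would assume $E$ is an echo of $(X,\Delta)$, so that $c_X(E)=\eta_W$ for a codimension-one subvariety $W$ of some component $B_i$ of $\Supp(\Delta)$ with $W$ avoiding every other component of $\Supp(\Delta)$.

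Two cases then arise. If $B_i$ is also a component of $\Supp(D)$, then since the remaining components of $\Supp(D)$ are a subset of those of $\Supp(\Delta)$, the subvariety $W$ also avoids them, placing $W$ in the smooth locus of $\Supp(D)$ and making $E$ an echo of $(X,D)$---contrary to $E$ contributing to $d(X,D)$. If instead $B_i$ is not a component of $\Supp(D)$, then by the same observation $W$ meets no component of $\Supp(D)$ at all, so $D$ vanishes in an open neighborhood of $\eta_W$, which gives $a(E,X,D)=a(E,X,0)$.

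The main obstacle will be extracting a contradiction in this second case. The key input is that $(X,\Delta)$ terminal with $0\le D\le \Delta$ forces $(X,0)$ to be terminal as well (since $a(E,X,0)\ge a(E,X,\Delta)>0$), and by Reid's theorem a terminal variety is smooth in codimension two, i.e.\ $\mathrm{Sing}(X)$ has codimension at least three in $X$. The codimension-two subvariety $W$ therefore cannot lie in $\mathrm{Sing}(X)$ for dimension reasons, so $X$ is smooth at $\eta_W$. Passing to this smooth local model, any exceptional valuation centered at a codimension-two point has discrepancy at least one, so $a(E,X,0)\ge 1$, contradicting $a(E,X,D)<1$. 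Ruling out both cases shows that $E$ cannot have been an echo of $(X,\Delta)$, completing the argument.
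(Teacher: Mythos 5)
Your proof is correct and takes essentially the same route as the paper's: the first inequality by termwise comparison of the summands in $\rho(X,\Delta)$, and the second by showing that an echo of $(X,\Delta)$ whose center misses $\Supp(D)$ would satisfy $a(E,X,D)=a(E,X)\ge 1$ because the terminal variety $X$ is smooth at that codimension-two point, while an echo whose center lies on $\Supp(D)$ is already an echo of $(X,D)$. The only cosmetic difference is that you phrase the dichotomy in terms of whether the component $B_i$ carrying the center belongs to $\Supp(D)$, which is equivalent to the paper's case split on whether $\xi\in\Supp(D)$.
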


\begin{proof}
As $0\le D\le \Delta$, every component of $D$ is also a component of $\Delta$, hence the first inequality is clear from the definition. Since $a(E,X,\Delta)\le a(E,X,D)$ holds for any prime divisor $E$ over $X$, to prove the second inequality it suffices to show that if $E$ is an exceptional divisor with $a(E,X,D)<1$ and it is not an echo of $(X,D)$, then it is also not an echo of $(X,\Delta)$. Suppose that $E$ is an echo for $(X,\Delta)$, then its center $\xi$ is a codimension one smooth point of $\Supp(\Delta)$. Note that in this case $\xi\in \Supp(D)$, otherwise $a(E,X,D)=a(E,X)\ge 1$ since $X$ is terminal and hence smooth at $\xi$. It follows that $\xi$ is also a codimension one smooth point of $\Supp(D)$ and hence an echo for $(X,D)$, a contradiction. This gives the second inequality.
\end{proof}

\begin{proof}[Proof of Lemma \ref{lem:fix discrep explicit}]
Let $\pi\colon (Y,D)\to (X,\Delta)$ be a projective birational morphism from a $\bQ$-factorial terminal pair such that $K_Y+D = \pi^*(K_X+\Delta)+E$ for some $\pi$-exceptional $\bR$-divisor $E\ge 0$. In other words, $\mult_F(D)\ge \min\{0,-a(F,X,\Delta)\}$ for all prime divisors $F$ on $Y$. We need to prove that the MMP \eqref{eq:MMP discrete discrep} terminates after at most $M^{2(k+1)^2 |S|}(\rho(Y,D)+d(Y,D))$ steps. By Lemma \ref{lem:drop coeff}, we may decrease the coefficients of $D$ and assume that
\[
\mult_F(D)=\min\{0,-a(F,X,\Delta)\}
\]
for all prime divisors $F$ on $Y$. In particular, we may assume that $\Coef(D)\subseteq S$. 

By Lemma \ref{lem:lift MMP terminal}, we can lift the given $(K_X+\Delta)$-MMP to a diagram 
\[
\begin{tikzcd}
(Y,D) =: (Y_0,D_0) \arrow[r,dashed,"\psi_0"] \arrow[d, shift left=5ex, "\pi_0=\pi"'] & (Y_1,D_1) \arrow[r,dashed,"\psi_1"] \arrow[d,"\pi_1"'] & \cdots \arrow[r,dashed] & (Y_i,D_i) \arrow[r,dashed,"\psi_i"] \arrow[d,"\pi_i"'] & \cdots \\
(X,\Delta) =: (X_0,\Delta_0) \arrow[r,dashed,"\varphi_0"] & (X_1,\Delta_1) \arrow[r,dashed,"\varphi_1"] & \cdots \arrow[r,dashed] & (X_i,\Delta_i) \arrow[r,dashed,"\varphi_i"] & \cdots 
\end{tikzcd}
\]
where each $\pi_i$ ($i\ge 1$) is a $\bQ$-factorial terminalization of $(X_i,\Delta_i)$ and each $\psi_i$ ($i\ge 0$) is composed of a sequence of $(K_{Y_i}+B_i)$-MMP where $B_i=(\psi_i^{-1})_* D_{i+1}\le D_i$. In particular, $\psi_i$ does not contract any component of $B_i$. Let $\delta_i:=\delta_M(Y_i,D_i)$ which is defined by \eqref{eq:s_M}. At each step we have two possibilities:
\begin{enumerate}
    \item $B_i=D_i$. In this case $\delta_{i+1}\le \delta_i-1$ by Theorem \ref{thm:terminal s_M non-increasing}. 
    \item $B_i\neq D_i$. By Lemma \ref{lem:drop coeff} and Lemma \ref{lem:s_M vs rho+d}, we have $\delta_M(Y_i,B_i)\le M^{2k+2} \delta_M(Y_i,D_i)$, hence Theorem \ref{thm:terminal s_M non-increasing} gives 
    $\delta_{i+1}\le \delta_M(Y_i,B_i)\le  M^{2k+2}\delta_i$ (note that in this case it might be possible that $Y_{i+1}=Y_i$).
\end{enumerate}
By construction $\Supp(D_i)$ has only $k$ components, and $\Coef(B_i)\cup \Coef(D_i)\subseteq S$ for all $i$, hence the second case can only happen at most $k\cdot |S|$ times since we always have $0\le B_i\le D_i$. If we rescale the sequence $\delta_0,\delta_1,\dots$ by setting $\tilde \delta_i := M^{(2k+2)\ell_i}\delta_i$ where $\ell_i = \#\{j\ge i\,|\,B_j\neq D_j\}$, then the above analysis gives $\tilde \delta_{i+1} \le \tilde \delta_i$ for all $i$ and the  equality holds at most $k\cdot |S|$ times. From here we see that the MMP \eqref{eq:MMP discrete discrep} must terminate after at most 
\[
\tilde \delta_0 \le M^{(2k+2)k\cdot |S|} \delta_0 + k\cdot |S|\le M^{2(k+1)^2 |S|} (\rho(Y,D)+d(Y,D))
\]
steps, where the last inequality is by Lemma \ref{lem:s_M vs rho+d}. This proves the desired result.
\end{proof}

We now put together the results obtained in this section and give some concrete applications. We shall only consider MMP that starts with a log smooth pair, since for any klt pair $(X,\Delta)$, we can always choose a log resolution $\pi\colon Y\to X$ and a log smooth klt pair $(Y,D)$ such that $K_Y+D\ge \pi^*(K_X+\Delta)+E$ for some $\pi$-exceptional $\bR$-divisor $E\ge 0$, and any $(K_X+\Delta)$-MMP can be lifted to a $(K_Y+D)$-MMP by Lemma \ref{lem:lift MMP dlt}. Moreover, if $K_X+\Delta$ (resp. $\Delta$) is big, then we can choose the log smooth model $(Y,D)$ so that $K_Y+D$ (resp. $D$) is big.

The first application is about fourfold general type MMPs.

\begin{thm}\label{thm:4fold explicit general type}
Let $N$ be a positive integer and let $(X,\Delta)$ be a projective log smooth klt pair of dimension $4$ such that $N\Delta$ has integer coefficients. Assume that 
\begin{enumerate}
    \item $(X,\Delta)$ has at most $N$ strata, each stratum has Picard number at most $N$, and $h_4^{\alg}(X)\le N$,
    \item $K_X+\Delta$ is big, $\vol(K_X+\Delta)\ge \frac{1}{N}$, and $((K_X+\Delta)\cdot H^3)\le N$ for some very ample divisor $H$ on $X$.
\end{enumerate}
Then any $(K_X+\Delta)$-MMP terminates after at most $M^M$ steps, where $M:=(2 N)^9!$.
\end{thm}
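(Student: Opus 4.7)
The strategy is to combine an explicit Cartier index bound for general type MMPs with the explicit terminal termination bound of Lemma \ref{lem:fix discrep explicit}, mirroring the approach sketched in the introduction for Theorem \ref{main: explicit}.

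First, I would invoke the explicit version of Theorem \ref{thm:index bdd} for general type MMPs from \cite{HQZ25} (the general type analogue of the bound sketched in the removed commented material): under our hypotheses $\vol(K_X+\Delta)\ge \tfrac{1}{N}$, $((K_X+\Delta)\cdot H^{3})\le N$, and $N\Delta\in \WDiv(X)$, together with $\Coef(\Delta)\subseteq\{0,\tfrac{1}{N},\ldots,\tfrac{N-1}{N}\}$ forced by the klt and integrality assumptions, there exists a positive integer $r\le r(N)$, bounded by an expression of the form $(n^nN^{2n+1})!$ with $n=4$, so in particular $r\le (2N)^9!=M$, such that the Cartier index of every $\bQ$-Cartier Weil divisor on any output of a $(K_X+\Delta)$-MMP is at most $r$.

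Second, by Lemma \ref{lem:index bdd imply mld discrete} applied with $S_0=\{\tfrac{i}{N}:0\le i<N\}$ and the index bound $r$, there is a discrete set containing all log discrepancies appearing in the MMP. Concretely, we may take the finite set $S:=\frac{1}{Nr}\{0,1,\ldots,Nr\}\subseteq [0,1]$, which contains $0$ and $1$ and is closed under addition within $[0,1]$; in particular, $-a(E,X_i,\Delta_i)\in S$ whenever this is positive, for any prime divisor $E$ over $X$ and any step $(X_i,\Delta_i)$ of the MMP. Moreover $|S|=Nr+1$ and $\min(S\setminus\{0\})=1-\max(S\setminus\{1\})=\tfrac{1}{Nr}$.

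Third, Lemma \ref{lem:s(X,Delta) log smooth} applied with $n=4$ and the hypotheses on strata, Picard numbers of strata, and $h_4^{\alg}(X)$ yields $e_+(X,\Delta)<N^5$ and $s(X,\Delta)<3N^{11}$. Applying Lemma \ref{lem:fix discrep explicit} with these data gives that any $(K_X+\Delta)$-MMP terminates in at most
\[
M_0^{\,2(k+1)^2|S|}\,s(X,\Delta)
\]
steps, where $k=e_+(X,\Delta)<N^5$ and $M_0=(2+(Nr)^2)\,|S|$ is polynomial in $N$ and $r$.

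Finally, it remains to verify that this explicit expression is dominated by $M^M$ for $M=(2N)^9!$. Since $r\le M$, both the base $M_0$ and the exponent $2(k+1)^2|S|$ are bounded by powers of $M$ (with the exponents being polynomial in $N$ and linear in $r$), while $s(X,\Delta)$ is polynomial in $N$. The doubly-exponential growth of $M^M$, combined with the factorial growth of $M$ itself as a function of $N$, then absorbs these factors by a direct estimate. The main obstacle in the proof is precisely this final bookkeeping; the conceptual core is the reduction of a klt general type MMP to the terminal discrete-discrepancy setting via the explicit \cite{HQZ25} index bound, exactly parallel to the big-boundary argument that gives Theorem \ref{thm:4fold explicit boundary big}.
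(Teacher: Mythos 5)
Your proposal is correct and follows essentially the same route as the paper: the explicit Cartier index bound from \cite{HQZ25} (the paper cites \cite[Corollary 3.20]{HQZ25}, giving index at most $(2^8N^9)!$), the discrete set $S=[0,1]\cap\frac{1}{N\cdot(2^8N^9)!}\bZ$, the bounds $e_+(X,\Delta)<N^5$ and $s(X,\Delta)<3N^{11}$ from Lemma \ref{lem:s(X,Delta) log smooth}, and then Lemma \ref{lem:fix discrep explicit} plus a final numerical comparison with $M^M$. The only cosmetic difference is that the paper does not route through Lemma \ref{lem:index bdd imply mld discrete} but directly observes that $Nr(K_{X_i}+\Delta_i)$ being Cartier places all discrepancies in $\frac{1}{Nr}\bZ$, which is in substance the justification you give anyway.
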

\begin{proof}
By \cite[Corollary 3.20]{HQZ25}, we know that in any $(K_X+\Delta)$-MMP, the Cartier index of any $\bQ$-Cartier Weil divisor is at most $(2^8 N^9)!$. Since $N\Delta$ has integer coefficients, we see that the assumptions of Lemma \ref{lem:fix discrep explicit} are satisfied if we set
\[
S:=[0,1]\cap \frac{1}{N\cdot (2^8 N^9)!}\bR.
\]
Let $N_1:=N\cdot (2^8 N^9)!$. Then $|S|=N_1+1$ and $\min (S\setminus\{0\})=1-\max (S\setminus\{1\})=\frac{1}{N_1}$. By Lemma \ref{lem:s(X,Delta) log smooth}, we also have $e(X,\Delta)\le N^5-1$ and $s(X,\Delta)\le 3N^{11}$. Thus by Lemma \ref{lem:fix discrep explicit} we see that the MMP terminates after at most 
\[
(2N_1+N_1^3)^{2N^{10} (N_1+1)} \cdot 3N^{11}\le  M^M
\]
steps.
\end{proof}

The same argument also works when the boundary $\Delta$ is big.

\begin{thm}\label{thm:4fold explicit boundary big}
Let $N$ be a positive integer and let $(X,\Delta)$ be a projective log smooth klt pair of dimension $4$ such that $N\Delta$ has integer coefficients. Assume that 
\begin{enumerate}
    \item $(X,\Delta)$ has at most $N$ strata, each stratum has Picard number at most $N$, and $h_4^{\alg}(X)\le N$,
    \item $\Delta$ is big, $\vol(\Delta)\ge \frac{1}{N}$, and $(\Delta\cdot H^3)\le N$ for some very ample divisor $H$ on $X$.
\end{enumerate}
Then any $(K_X+\Delta)$-MMP terminates after at most $M^M$ steps, where $M:=(2 N)^9!$.
\end{thm}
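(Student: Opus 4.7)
The plan is to mimic the proof of Theorem \ref{thm:4fold explicit general type} almost verbatim, substituting the index bound for general type MMPs by the analogous index bound for MMPs with a big boundary. Specifically, by \cite[Corollary 3.18]{HQZ25} (which is the ``boundary big'' analog of \cite[Corollary 3.20]{HQZ25} and whose hypotheses are exactly assumption (2) above together with the integrality assumption on $N\Delta$ and the bounds in (1)), there exists an explicit positive integer $r$, of order at most $(2^8 N^9)!$, such that in any $(K_X+\Delta)$-MMP the Cartier index of every $\bQ$-Cartier Weil divisor is bounded above by $r$.

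With this uniform index bound in hand, I would set
\[
S:=[0,1]\cap \frac{1}{N\cdot r}\,\bZ,
\]
and verify that the hypotheses of Lemma \ref{lem:fix discrep explicit} are satisfied along any $(K_X+\Delta)$-MMP: since $N\Delta$ has integer coefficients and all Weil divisors on subsequent models have Cartier index dividing $r$, every log discrepancy (and in particular every negative log discrepancy) belongs to $\frac{1}{Nr}\bZ$, and therefore every negated negative discrepancy lies in $S$. Writing $N_1:=N\cdot r$, we have $|S|=N_1+1$ and $\min(S\setminus\{0\})=1-\max(S\setminus\{1\})=\tfrac{1}{N_1}$.

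Next I would control the topological invariants $e_+(X,\Delta)$ and $s(X,\Delta)$ using Lemma \ref{lem:s(X,Delta) log smooth}, which applies directly to the log smooth pair $(X,\Delta)$ under hypothesis (1) (note $b_i\le 1-\frac{1}{N}$ follows from $N\Delta$ having integer coefficients together with klt-ness). This gives $e_+(X,\Delta)<N^{5}$ and $s(X,\Delta)<3N^{11}$ in dimension $n=4$.

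Finally I would plug these bounds into Lemma \ref{lem:fix discrep explicit}: with $k=e_+(X,\Delta)<N^5$ and with the value of $M$ (in the statement of Lemma \ref{lem:fix discrep explicit}) at most $(2+N_1^2)(N_1+1)$, the lemma gives a termination bound of the form
\[
\bigl((2+N_1^2)(N_1+1)\bigr)^{2(N^5+1)^2(N_1+1)}\cdot 3N^{11},
\]
which is easily seen to be bounded by $M^M$ for $M=(2N)^9!$, upon absorbing all polynomial factors. The main obstacle here is simply to ensure that the numerical estimate $(2+N_1^2)^{2(k+1)^2|S|}\cdot s(X,\Delta)\le M^M$ actually holds — this is a routine (if slightly tedious) check, exactly parallel to the one at the end of the proof of Theorem \ref{thm:4fold explicit general type}, since $r\le (2^8 N^9)!$ and $N_1\le N\cdot(2^8 N^9)!$ are of the same order in both proofs.
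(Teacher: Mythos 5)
Your proposal is correct and matches the paper's proof exactly: the paper's argument for this theorem is literally to repeat the proof of Theorem \ref{thm:4fold explicit general type} with \cite[Corollary 3.20]{HQZ25} replaced by \cite[Corollary 3.18]{HQZ25}, then invoke Lemma \ref{lem:s(X,Delta) log smooth} and Lemma \ref{lem:fix discrep explicit} with the same choice of $S$ and the same numerical bookkeeping you carry out.
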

\begin{proof}
This follows from the same proof of Theorem \ref{thm:4fold explicit general type}, replacing \cite[Corollary 3.20]{HQZ25} by \cite[Corollary 3.18]{HQZ25}.
\end{proof}


Finally, we give an explicit bound on the length of threefold MMPs. It is not clear if such an explicit termination should be expected in dimension $\ge 4$, see \cite[Example 12]{Sho-mld-conj} for some discussions.

\begin{thm}\label{thm: threefold explcit upper bound}
Let $N$ be a positive integer and let $(X,\Delta)$ be a projective log smooth klt pair of dimension $3$ such that $N\Delta$ has integer coefficients, $(X,\Delta)$ has at most $N$ strata, and each stratum has Picard number at most $N$. Then any $(K_X+\Delta)$-MMP terminates after at most 
\[
3(N\cdot M!)^{8N^8(M!+1)}
\]
steps, where $M:=((N^5+2N)!)^{2^{N^4}}$.
\end{thm}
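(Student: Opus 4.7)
The proof will follow the same template as Theorems \ref{thm:4fold explicit general type} and \ref{thm:4fold explicit boundary big}, replacing the bigness-based index bounds from \cite{HQZ25} with the general threefold index bound in Corollary \ref{cor:index 3-fold MMP}. Since $N\Delta$ has integer coefficients and $(X,\Delta)$ is log smooth klt, all coefficients of $\Delta$ lie in $[0,1-1/N]$, and every discrepancy over $(X,\Delta)$ lies in $\frac{1}{N}\bZ$; in particular the smallest positive discrepancy of $(X,\Delta)$ satisfies $\varepsilon \ge 1/N$. Lemma \ref{lem:s(X,Delta) log smooth} applied with $n=3$ then gives $e_+(X,\Delta) < N^4$ and $s(X,\Delta) < 3N^9$.

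Next, Corollary \ref{cor:index 3-fold MMP} shows that the Cartier index of any $\bQ$-Cartier Weil divisor on the running pair $(X_i,\Delta_i)$ throughout the MMP is at most
\[
\left(\left\lceil \frac{e_+(X,\Delta)+3}{\varepsilon}\right\rceil!\right)^{2^{1+e_+(X,\Delta)}} \le \bigl((N^5+2N)!\bigr)^{2^{N^4}} = M.
\]
Since no new divisor components are created along the MMP, each $N\Delta_i$ continues to have integer coefficients, so $NM(K_{X_i}+\Delta_i)$ is Cartier and every log discrepancy with respect to $(X_i,\Delta_i)$ lies in $\tfrac{1}{NM}\bZ$. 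As $M$ is astronomically large we have $M! \ge NM$, so I would take $S := [0,1] \cap \tfrac{1}{M!}\bZ$; this is a finite set containing $\{0,1\}$, closed under addition with sum $\le 1$, and it contains $-a(E,X_i,\Delta_i)$ whenever the latter is negative, thereby verifying the hypotheses of Lemma \ref{lem:fix discrep explicit}.

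Applying Lemma \ref{lem:fix discrep explicit} with this $S$ gives $|S|=M!+1$, $\min(S\setminus\{0\}) = 1-\max(S\setminus\{1\}) = 1/M!$, and hence the internal constant $M_{\mathrm{Lem}} = (2+(M!)^2)(M!+1) \le 8(M!)^3$. Taking $k = e_+(X,\Delta) < N^4$ so that $(k+1)^2 \le N^8$, the lemma bounds the total number of steps in any $(K_X+\Delta)$-MMP by
\[
M_{\mathrm{Lem}}^{2(k+1)^2|S|}\cdot s(X,\Delta) \le \bigl(8(M!)^3\bigr)^{2N^8(M!+1)}\cdot 3N^9.
\]
Since $8 \le M!$ and $N^9 \le N^{8N^8(M!+1)}$ for $M$ large, this simplifies to $3(N\cdot M!)^{8N^8(M!+1)}$, exactly the stated bound.

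The computations in the last step are routine. The one genuine subtlety I would anticipate is that Lemma \ref{lem:s(X,Delta) log smooth} requires $h^{\alg}_{2n-4}(X)\le N$, which in dimension $3$ becomes $h^{\alg}_{2}(X)\le N$; this is not among the explicit hypotheses here. One would either need to extract it from the given strata and Picard-rank data (via a Mayer--Vietoris style argument in the spirit of Lemma \ref{lem: compare h^alg}), or replace Lemma \ref{lem:s(X,Delta) log smooth} by a dimension-$3$ variant that avoids this hypothesis---at the possible cost of minor adjustments to the exponents, which the theorem's very loose bound can easily accommodate.
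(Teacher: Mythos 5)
Your proposal follows essentially the same route as the paper's proof: bound $e_+(X,\Delta)$ and $s(X,\Delta)$ via Lemma \ref{lem:s(X,Delta) log smooth}, bound the Cartier index along the MMP by $M$ via Corollary \ref{cor:index 3-fold MMP}, and feed a set $S$ of denominators of the form $\frac{1}{N\cdot M!}\mathbb Z$ (the paper) or $\frac{1}{M!}\mathbb Z$ (yours, equally valid since $N r\mid M!$ for any index $r\le M$) into Lemma \ref{lem:fix discrep explicit}. The one subtlety you flag, the missing hypothesis $h^{\alg}_2(X)\le N$, is resolved exactly as you suggest: for a smooth projective threefold $h^{\alg}_2(X)=\rho(X)\le N$ since $X$ is itself a stratum, which is precisely the remark the paper makes.
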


\begin{proof}
Since $N\Delta$ has integer coefficients, any positive discrepancy of $(X,\Delta)$ is at least $\frac{1}{N}$. For smooth threefolds we also have $h^{\alg}_2(X)=\rho(X)$ (indeed, the invariant $h^{\alg}_2(X)$ does not play any role in proving the termination of $3$-fold MMP, but we don't need this fact). By Lemma \ref{lem:s(X,Delta) log smooth}, we see that $e(X,\Delta)\le N^4-1$ and $s(X,\Delta)\le 3N^9$. Thus by Corollary \ref{cor:index 3-fold MMP}, the Cartier index of any $\bQ$-Cartier Weil divisor is at most $M$. Then the assumptions of Lemma \ref{lem:fix discrep explicit} are satisfied by $S:=[0,1]\cap \frac{1}{N\cdot M!}\bR$, and it follows that the MMP terminates after at most 
\[
((2+(N\cdot M!)^2)\cdot N\cdot M!)^{2N^8(M!+1)}\cdot 3N^9 \le 3(N\cdot M!)^{8N^8(M!+1)}
\]
steps.
\end{proof}

\bibliography{ref}

\end{document}